\documentclass[hidelinks,onefignum,onetabnum]{siamart251216}

\usepackage{lipsum}
\usepackage{amsfonts}
\usepackage{graphicx}
\usepackage{epstopdf}
\usepackage{algorithmic}
\usepackage{ntheorem}
\ifpdf
  \DeclareGraphicsExtensions{.eps,.pdf,.png,.jpg}
\else
  \DeclareGraphicsExtensions{.eps}
\fi

\usepackage{comment}
\usepackage{mathrsfs}
\usepackage{enumitem}

\newsiamremark{remark}{Remark}
\newsiamremark{observation}{Observation}
\newsiamremark{hypothesis}{Hypothesis}
\newsiamremark{notation}{Notation}
\crefname{hypothesis}{Hypothesis}{Hypotheses}
\newsiamthm{claim}{Claim}
\newsiamremark{fact}{Fact}
\crefname{fact}{Fact}{Facts}
\newsiamremark{problem}{Problem}

\headers{Mean-Field Oscillator Ising Machines}{A. R. Venkatakrishnan, M. Emerick, B. Bamieh, and F. Bullo}

\title{Mean-Field Oscillator Ising Machines: Gradient Flows and Classification of Limit Solutions\thanks{Submitted to the editors DATE.
\funding{Supported in part by ARO MURI W911NF-24-1-0228, NSF ECCS-2453491, and AFOSR FA9550-23-F-0014.}}}

\author{Arvind R. Venkatakrishnan\thanks{Department of Mechanical Engineering, University of California, Santa Barbara, CA, USA (\{bamieh,bullo,memerick,arvindragghav\}@ucsb.edu). The first two authors contributed equally.}
\and Max Emerick\footnotemark[2]
\and Bassam Bamieh\footnotemark[2]
\and Francesco Bullo\footnotemark[2]}

\usepackage{amsopn}

\newcommand{\leb}{\mathscr{L}}

\ifpdf
\hypersetup{
  pdftitle={title},
  pdfauthor={author}
}
\fi

\begin{document}

\maketitle

\begin{abstract}
Oscillator Ising Machines (OIMs) have emerged as promising computational architectures for approximating solutions to combinatorial optimization problems. We derive and analyze the mean-field limit of an OIM model and show that it inherits the gradient-flow structure of the finite-dimensional dynamics. We identify conditions under which this mean-field evolution admits an Eulerian formulation as a gradient flow on the Wasserstein space of probability measures, and contrast this with a Lagrangian formulation which is always available. The gradient-flow structure strongly constrains the long-time dynamics and enables a complete classification of limit solutions and their stability in the symmetric case. In particular, all limit solutions are fixed points whose phases cluster into at most four groups, and for almost all parameter values, only binarized fixed points -- those with clusters at $0$ and/or $\pi$ -- can be stable. Since binarized states are exactly those for which a feasible solution to the original problem can be read out, this shows that feasible solutions can almost always be recovered. We provide tight bounds on the parameter thresholds for which fixed points in this binarized family are stable, thereby identifying the threshold for binarization in this model. We also present numerical evidence that the mean-field model correctly predicts behavioral regimes in large random networks, including Erd\H{o}s-R\'enyi networks.
\end{abstract}

\begin{keywords}
Oscillator Ising Machines, 
Mean-Field Limits, 
Gradient Flows, 
Optimal Transport, 
Stability and Asymptotic Behavior
\end{keywords}

\begin{MSCcodes}
37B35 (Primary), 35Q82, 49Q22, 82C22 (Secondary)
\end{MSCcodes}

\section{Introduction}
Combinatorial Optimization Problems (COPs) are ubiquitous in modern networked systems, with applications including routing, scheduling, resource allocation, and data compression. Many of these problems are NP-complete, with the cost of obtaining exact solutions scaling exponentially with the problem size. This has driven the development of alternative computational architectures such as Oscillator Ising Machines (OIMs), which are able to approximate solutions to these problems at a fraction of the computational cost.

As shown in \cite{AL::2014::FrontPhys}, various NP-complete COPs (and thereby all 21 of Karp's equivalent NP-complete problems \cite{RK::2010::IntProg}) can be reformulated as discrete minimization problems of the following form
\begin{equation} \label{ising_problem}
	\min_{\sigma \in \Sigma} ~ \frac{1}{2}\sum_{i,j} A_{ij} \sigma_i \sigma_j .
\end{equation}
Here, $i,j$ are discrete indices taking values in the index set $\{ 1 , ... , N \}$, the optimization variable $\sigma$ is a vertex of the $N$-dimensional hypercube $\Sigma = \{ \pm 1 \}^N$, and $A$ is a symmetric weight matrix with real entries. The function being minimized in \eqref{ising_problem} is sometimes referred to as the \emph{Ising Hamiltonian}\footnote{Note that in the literature on Ising machines, the convention is usually to define the Hamiltonian by $H := - \sum J_{ij} \sigma_i \sigma_j$, i.e., with the convention $J_{ij} = -\tfrac{1}{2} A_{ij}$.} and the optimization variable $\sigma$ is sometimes called a \emph{spin configuration}. The spins $\sigma$ can be interpreted in multiple ways, such as electron spins in Ising's original formulation, graph partitions in MaxCut problems, etc.

OIMs, first introduced by \cite{TW::2019::UCNC} and \cite{TW::2021::NatComput}, have been highly successful in providing state-of-the-art (approximate) solutions to \eqref{ising_problem} in a fast, energy-efficient, and scalable manner. The idea is to first embed the above discrete problem into a continuous surrogate using \emph{Kuramoto oscillators} with \emph{second-harmonic injection locking}
\begin{equation} \label{continuous_surrogate}
	\min_{\theta \in \T^N} ~ \frac{1}{2} \sum_{i,j} A_{ij} \cos(\theta_i - \theta_j) - \frac{K_s}{2} \sum_i \cos(2 \theta_i) .
\end{equation}
Here, the optimization variable $\theta$ is now a continuous variable in the $N$-torus $\T^N$. The first term takes the same value as the Ising Hamiltonian for the choice $\sigma_i = \cos(\theta_i)$ when $\theta \in \{ 0 , \pi \}^N$, while the second term (the ``second-harmonic injection'') incentivizes $\theta$ to take values in this set. The parameter $K_s > 0$ controls the relative strength of this incentivization. With a continuous objective, one may then apply a continuous optimization algorithm such as gradient descent, which yields the dynamics
\begin{equation} \label{discrete_dynamics}
	\dot{\theta}_{i} = \sum_{j} A_{ij} \sin(\theta_{i} - \theta_{j}) - K_{s}\sin(2\theta_{i}) .
\end{equation}
Thus, for appropriately-tuned values of $K_s$, one expects that these dynamics should converge to a local minimum of \eqref{continuous_surrogate}, with $\theta \in \{ 0 , \pi \}^N$, so that a feasible solution to the original problem \eqref{ising_problem} may be recovered via the readout $\sigma_i = \cos(\theta_i)$. The great advantage of OIMs is that the dynamics \eqref{discrete_dynamics} can be realized in physical hardware, such as FPGAs \cite{KT::2019::FPL} or optical lasers \cite{SU::2011::OptExpress}, and thus highly efficient analog computers can be designed to carry out these computations. A diagrammatic representation of this is illustrated in Figure~\ref{fig: OIM-pipeline}.

\begin{figure}[!h]
    \centering
\includegraphics[width=\linewidth]{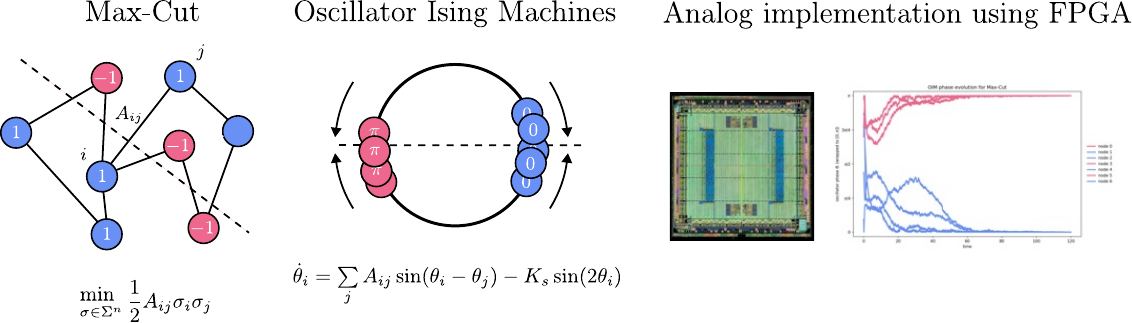}
    \caption{The above image illustrates the traditional Ising problem to hardware implementation pipeline. The classical problem of minimizing the Ising Energy is relaxed into an optimization problem on the torus. Minimizing this surrogate optimization problem gives rise to coupled oscillator dynamics which is implemented on an analogue machine such as an FPGA. By performing a suitable interpretation of such an analog implementation, we can recover approximate solutions to the original Ising problem. }
    \label{fig: OIM-pipeline}
\end{figure}

OIMs have thus attracted a lot of attention in recent years. Some of the earliest experimental implementations include \cite{JC::2019::SciRep} from analog LC relaxation oscillators. Other hardware implementations based on VO$_2$ insulator-to-metal phase transition relaxation oscillators, CMOS, and FPGA circuits have been demonstrated in \cite{SD::2021::NatElectron}, \cite{WM::2022::NatElectron} and \cite{KT::2019::FPL}. On the experimental side, works such as \cite{MB::2021::IEEEAccess} and \cite{AM::2021::IEDM} have investigated empirical trade-offs and best practices. These implementations have tackled a wide variety of problems, including associative memory in \cite{DN::2015::JXCDC} and computing the maximum independent set in graph problems in \cite{AM::2020::NatCommun}. These devices have also been scaled up in \cite{WM::2022::NatElectron} where the authors fabricate CMOS-based OIMs to solve generalized combinatorial optimization problems up to 1968 nodes. Similar results have also been obtained for higher-order OIMs to perform hypergraph partitioning in \cite{MB::2023::SciRep}. On the theoretical side, it is well-known that OIMs are gradient systems and have a corresponding Lyapunov function as shown in the works \cite{TW::2019::UCNC}, \cite{YC::2024::Chaos}, and \cite{MB::2023::JApplPhys}, among others. Works such as \cite{YC::2024::Chaos} have pushed towards a classification of fixed points along with estimates of parameter bounds for effective binarization. Apart from just focusing on the equilibrium points, the authors in \cite{MB::2023::JApplPhys} also investigate the transient dynamics before binarization. Some other papers have taken the experimental route to answer these theoretical questions, such as \cite{MB::2021::IEEEAccess, IA::2021::JSSC}. The existing classifications are not fully satisfactory, however: while they classify fixed points into various types, they do not identify all of the possible fixed points in a concrete manner, nor treat their dynamical stability. We thus find these questions suitable for further analysis. 

On the other hand, mean-field analysis has proved extremely useful in understanding Kuramoto oscillator networks -- essentially OIMs without second-harmonic injection \cite{SS::2000::PhysicaD,YK::1975::LNP, JA::2005::RevModPhys, YK::2006::PTPSuppl, HC::2013::ErgodicTheory, HC::2019a::DCDS, HC::2019b::DCDS}. We thus find the investigation of mean-field OIM models to be an extremely natural direction. The analysis of such mean-field models is naturally carried out using the machinery of gradient flows. The recognition that many evolution PDEs can be understood as gradient flows with respect to a suitable metric -- most notably, the Wasserstein metric of optimal transport theory -- originates with the work of \cite{RJ::1998::SIAMMathAnal,FO::2001::CommPDE} and has been developed into a comprehensive theory in \cite{LA::2005::GradFlows}. This perspective has proven especially powerful for interacting-particle and mean-field systems \cite{JC::2003::RevMatIberoam, BM::2010::M3AS}, where it provides strong tools for studying well-posedness and long-time behavior. We adopt this viewpoint throughout.

Thus, in this work, we consider mean-field limits of OIMs. We provide (Section \ref{sec: problem description}) formal derivations of two different mean-field models, which we term \emph{Lagrangian} and \emph{Eulerian} in analogy with fluid mechanics. The Lagrangian representation (always valid) describes the dynamics in terms of the phases of individual oscillators, while the Eulerian representation (valid in the case of symmetric oscillators) describes the dynamics in terms of the density of oscillator phases. We show (Section \ref{sec: gradient flow structure}) that both representations retain the gradient-flow structure of the original finite-dimensional dynamics. In particular, the Lagrangian representation is a gradient flow with respect to the $L^2$ Riemannian structure on the space of phase functions, while the Eulerian representation is a gradient flow with respect to the 2-Wasserstein structure on the space of phase densities. This gradient flow structure strongly constrains the behavior of the system, implying, for instance, that trajectories can only limit to equilibria. This enables a classification of limit solutions (Section \ref{sec: equilibria}) and their stability (Section \ref{sec: stability}) in the symmetric case, using first- and second-order conditions via the calculus of variations. We show that in the symmetric setting, all limit solutions consist of unions of fixed points with phases clustered into at most four groups, and that for almost all parameter values, only \emph{binarized} fixed points (that is, fixed points with clusters only at $0$ and/or $\pi$) can be stable. This shows that feasible solutions to the original discrete problem \eqref{ising_problem} can (almost) always be recovered. We also provide tight bounds on the parameter values for which each of these binarized fixed points is stable. This identifies the so-called \emph{threshold for binarization} in this model, that is, the threshold for $K_s$ for which ``interesting'' solutions are stable. We present (Section \ref{sec: numerical results}) numerical results showing that this mean-field model correctly predicts behavior regimes in certain large random networks, including Erd\H{o}s-R\'enyi networks. We conclude (Section \ref{sec: conclusion}) with a brief review and discussion of limitations and future work.

\section{Mean-Field Models and Problem Description} \label{sec: problem description}

We introduce two mean-field models which formally describe the continuum, $N \to \infty$ limit of the model \eqref{continuous_surrogate}-\eqref{discrete_dynamics}. These models are termed \emph{Lagrangian} and \emph{Eulerian}. The Lagrangian model (Section \ref{subsec: lagrangian model}) describes the dynamics in terms of the phases of individual oscillators, and can always be defined. The Eulerian model (Section \ref{subsec: eulerian model}) describes the dynamics in terms of the density of oscillator phases, and can be defined in the special case of symmetric oscillators.

\subsection{Lagrangian Model} \label{subsec: lagrangian model}

The Lagrangian model is derived formally by passing from the discrete energy \eqref{continuous_surrogate} and dynamics \eqref{discrete_dynamics} to their continuum counterparts. In order for a continuum limit of either the energy \eqref{continuous_surrogate} or dynamics \eqref{discrete_dynamics} to be defined, however, a continuum limit of the underlying graphs first needs to be defined. If the underlying graphs are dense\footnote{There are indeed interesting cases where the underlying graphs are not dense and converge to different sorts of objects (as in, e.g., the case of lattices), but we do not treat these cases here.}, then the continuum limit can be described by a \emph{graphon}. Roughly, as the number of nodes $N$ grows to $\infty$, the symmetric adjacency matrices $A_N$ converge to a symmetric kernel $K: \I \times \I \to \R$, in some appropriate sense, where $\I$ denotes the continuum index set (most often $[0,1]$).

The value $K(x,y)$ can be given either deterministic or probabilistic interpretations as either a \emph{connection strength} or a \emph{connection probability} between the ``continuum vertices'' $x$ and $y$. In the most general setting, $K(x,y)$ represents the \emph{expected connection strength} between $x$ and $y$. Regardless of interpretation, however, the limiting object $K$ is deterministic with values $K(x,y)$ representing a strength of interaction. We allow $K$ to take values in all of $\R$. In particular, $K(x,y) < 0$ corresponds to the case of \emph{attractive} oscillators (energy is minimized when the phases of $x$ and $y$ align), while $K(x,y) > 0$ corresponds to the case of \emph{repulsive} oscillators (energy is minimized when the phases of $x$ and $y$ anti-align).

With a graphon limit $K$ defined, the Lagrangian energy function and dynamics may be obtained by passing directly from the discrete indices $i,j$ to the continuum indices $x,y$ in \eqref{continuous_surrogate}-\eqref{discrete_dynamics}, replacing the adjacency matrix $A$ with the graphon $K$:
\begin{align}
	E(\theta) &= \frac{1}{2}  \iint K(x,y) \cos\big(\theta(x) - \theta(y)\big) \, \mathrm{d}x \, \mathrm{d}y - \frac{K_s}{2} \int \cos(2 \theta(x)) \, {\mathrm{d}x} , \label{lagrangian_energy} \\
	\partial_t \theta(x) &= \int K(x,y) \sin(\theta(x) - \theta(y) ) \, \mathrm{d}y - K_s \sin(2 \theta(x)) . \label{lagrangian_dynamics}
\end{align}
Here, the indices $x,y$ are now continuous variables valued in the index set $\I$. The state variable $\theta$ is now a function from the index set $\I$ to the torus $\T$ which we call the \emph{phase function}, and all integrals occur over the index set $\I$. We will show in Section \ref{sec: gradient flow structure} that the formal limits \eqref{lagrangian_energy}-\eqref{lagrangian_dynamics} retain the gradient flow structure of the original finite-dimensional system \eqref{continuous_surrogate}-\eqref{discrete_dynamics}. That is, \eqref{lagrangian_dynamics} is indeed a gradient flow of \eqref{lagrangian_energy} with respect to an appropriate metric (in this case, the $L^2$ Riemannian metric).

In this paper, we focus primarily on the totally symmetric case where $K(x,y) \equiv K =$ constant. This case arises in many ways: deterministically as the continuum limit of complete graphs, probabilistically as the continuum limit of homogeneous random graphs, or heuristically as a natural leading-order approximation to more general graphons for which $K := \text{avg}(K(x,y))$ is taken to be the ``average interaction strength'' of the network. In this case, the model admits a second, label-independent representation -- the Eulerian model -- which we develop next.

\subsection{Eulerian Model} \label{subsec: eulerian model}

We call the $K =$ constant case ``totally symmetric'' because $K$ -- and thus the energy \eqref{lagrangian_energy} and dynamics \eqref{lagrangian_dynamics} -- are invariant under relabelings of the index set $\I$. In other words, it does not matter which labels are given to which oscillators, i.e., all oscillators are symmetric. This observation suggests that the index set is extraneous in this case, and so we seek a representation which is defined independently of the index set. Such a representation is given by the \emph{Eulerian} description, which is obtained by pushing forward the state, energy, and dynamics from the index set $\I$ to the torus $\T$.

These objects push forward differently, as we now explain. First, the state pushes forward from a \emph{phase function} $\theta: \I \to \T$ to a \emph{phase density} $\rho$ on $\T$. Precisely, $\rho$ is the probability measure on $\T$ formed by pushing the (normalized) Lebesgue measure on $\I$ forward through the phase function $\theta$. We thus write $\rho = \theta_\# \mathscr{L}$, meaning that $\rho(B) = \leb(\theta^{-1}(B))$ for all $B \subset \T$ measurable.

\begin{notation}
    We now pause to make two important notational clarifications. First, we will abuse notation throughout this paper, using the symbol $\rho$ to refer to its ``density function'' -- even when such a density does not exist -- and using $\rho(\theta) \, \mathrm{d} \theta$ in place of the measure-theoretic $\mathrm{d} \rho$ in expressions. Second, we will overload notation, using the symbol $\theta$ both for the phase function $\theta: \I \to \T$ and for the independent spatial variable $\theta \in \T$. (This is useful, since the phase of an oscillator is then always denoted $\theta$.) Thus, we write $\rho(\theta)$ to refer to the relative density of oscillators with phase $\theta \in \T$, with the understanding that this quantity may be singular.
\end{notation}

With this notation in place, we may then equivalently characterize the pushforward relationship $\rho = \theta_\# \mathscr{L}$ via the change-of-variables formula
\begin{equation} \label{change_of_variables_eq}
    \int_\I f(\theta(x)) \, \mathrm{d}x = \int_\T f(\theta) \rho(\theta) \, \mathrm{d} \theta ,
\end{equation}
valid for all $f:\T \to \R$ measurable.

It is an important point that the transformation $\mathcal{T}: \theta \mapsto \rho = \theta_\# \leb$ is not an injective mapping. In fact, any two phase functions $\theta$ and $\phi$ which are related by a (measure-preserving) relabeling of $\I$ -- meaning that $\theta = \phi \circ \sigma$ with $\sigma_\# \leb = \leb$ -- push forward to the same phase density $\rho$. Thus, the energy \eqref{lagrangian_energy} and dynamics \eqref{lagrangian_dynamics} push forward to well-defined functions of $\rho$ exactly when they are invariant under relabelings of $\I$, i.e., exactly when $K(x,y) \equiv K = $ constant. Said another way, the Eulerian description is valid precisely when $\rho$ forms a \emph{sufficient statistic} to determine the energy and evolution of the system.

In this case, the Eulerian energy function may be computed directly from \eqref{lagrangian_energy} using the change-of-variables formula \eqref{change_of_variables_eq}:
\begin{equation} \label{eulerian_energy}
    E(\rho) = \frac{K}{2} \iint \cos(\theta - \phi) \rho(\theta) \rho(\phi) \, \mathrm{d} \theta \, \mathrm{d} \phi - \frac{K_s}{2} \int \cos(2\theta) \rho(\theta) \, \mathrm{d} \theta .
\end{equation}
Keeping in line with our notation, the indices $\theta,\phi$ are now continuous variables valued in the torus $\T$, and all integrals occur over $\T$.

The dynamics \eqref{lagrangian_dynamics} in this case factor into a pointwise flow
\begin{equation} \label{flow_eqn}
    \partial_t \theta(x) = v_{\theta} (\theta(x))
\end{equation}
for a vector field
\begin{equation} \label{vector_field_theta_eqn}
    v_\theta(\bullet) := K \int_\I \sin( \bullet - \theta(y)) \, \mathrm{d}y - K_s \sin(2 \bullet) 
\end{equation}
defined on the torus $\T$. Using the change of variables formula \eqref{change_of_variables_eq} again, we may equivalently write $v$ in terms of $\rho$ as
\begin{align} \label{vector_field_rho_eqn}
    v_\rho(\bullet) &:= K \int_\T \sin(\bullet - \theta )\, \rho(\theta) \, \mathrm{d}\theta - K_s \sin(2 \bullet) \\
    & \phantom{:} = K (\sin * \rho)(\bullet) - K_s \sin(2 \bullet) , \nonumber
\end{align}
where $*$ denotes convolution. This vector field then acts to transport the phase density $\rho$ via the continuity equation, resulting in the Eulerian dynamics
\begin{align} \label{eulerian_dynamics}
    \partial_t \rho(\theta) &= - \nabla \cdot \left( \rho(\theta) \, v_\rho(\theta) \right) \\
    &= - \nabla \cdot \big( \rho(\theta) \, \left[ K (\sin * \rho) (\theta) - K_s \sin(2 \theta ) \right] \big) . \nonumber
\end{align}
Here, $\nabla \cdot$ denotes the divergence operator, and solutions are to be interpreted in an appropriate weak sense. We will show in Section \ref{sec: gradient flow structure} that the pair \eqref{eulerian_energy}, \eqref{eulerian_dynamics} also retain a gradient flow structure with respect to an appropriate metric (in this case, the 2-Wasserstein metric, which emerges naturally out of the invariance under relabeling). A pictorial representation of the push-forward relationship between Lagrangian and Eulerian representations is shown in Figure \ref{fig: Lagrangian-Eulerain Framework}.
\begin{figure}[!h]
    \centering
\includegraphics[width=\linewidth]{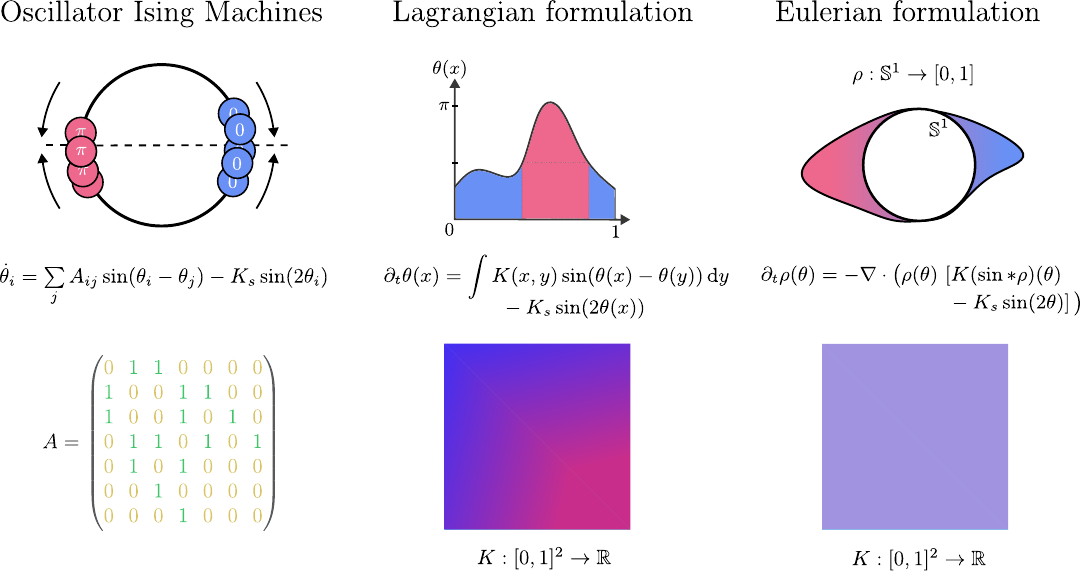}
    \caption{
    The above image illustrates the shift in perspective from the finite size OIM, to the Lagrangian formulation and finally the Eulerian formulation. The first row depicts the visualization of the finite-OIM, Lagrangian and Eulerian formulation as a network of oscillators, the phase function on $[0,1]$ and density function on the circle. The second row shows the corresponding state dynamics. The third row illustrates the equivalent "adjacency matrix" in the three formulations. The traditional graph adjacency matrix in the finite-OIM, the symmetric graphon limit in the Lagrangian formulation and a constant graphon in the Eulerian case.}
    \label{fig: Lagrangian-Eulerain Framework}
\end{figure}

We may now state the central problem which we study in this work.

\begin{problem} \label{main_prob}
    Classify the limiting (i.e. $t \to \infty$) behavior of solutions to \eqref{lagrangian_dynamics} for $K(x,y) \equiv K =$ constant (equivalently, of solutions to \eqref{eulerian_dynamics}).
\end{problem}

\section{Gradient Flow Structure and Limiting Behavior} \label{sec: gradient flow structure}

In this section, we establish our main theoretical tools to address Problem \ref{main_prob}. The central observation is that both mean-field models introduced in the previous section are gradient flows. This gradient flow structure strongly constrains the behavior of solutions, and reduces the classification of limiting behavior to a classification of equilibria -- a program which we carry out systematically in Sections \ref{sec: equilibria} and \ref{sec: stability}. We begin (Section \ref{subsec: gradient flow formulation}) by establishing the gradient-flow structure of the Lagrangian model with respect to the $L^2$ Riemannian metric on the space of phase functions. This structure provides (Section \ref{subsec: fixed points and stability}) clean characterizations of fixed points and their stability via the first and second variations of the energy function. We then describe (Section \ref{subsec: constant k case}) the additional structure that emerges in the totally symmetric, constant-$K$ case, and use it (Section \ref{subsec: limiting behavior}) to show that the set of accumulation points of every trajectory is a connected union of fixed points contained in a single energy level. We reduce (Section \ref{subsec: convergence question}) the question of convergence of trajectories to individual fixed points to a finite-dimensional question concerning the order parameter, which we resolve conditionally. We close (Section \ref{subsec: wasserstein gradient flows}) by discussing the gradient flow structure of the Eulerian model with respect to the 2-Wasserstein metric on the space of phase densities and its relation both to the gradient flow structure of the Lagrangian model and the symmetries discussed earlier in Section \ref{sec: problem description}.

\subsection{Gradient Flow Formulation} \label{subsec: gradient flow formulation}

When one says that some equation is a gradient flow, it always needs to be specified which geometry the gradient flow is with respect to, since the gradient -- and hence the flow -- depends on the choice of metric. In the case of the Lagrangian mean-field model \eqref{lagrangian_energy}-\eqref{lagrangian_dynamics}, the correct geometry is the $L^2$ Riemannian geometry on the space of phase functions $\Theta$. We now discuss this geometry and its implications.

In the finite-dimensional Euclidean setting, one identifies the gradient of a function $f$ as the vector of partial derivatives: $\nabla f = (\partial_{x_1} f , ... , \partial_{x_n} f)$. In more general settings, one must extend this definition as the Riesz representation of the first variation. That is, the first variation is first defined as the linear operator
\begin{equation} \label{first_variation_eq}
    Df(x)[v] := \lim_{h \to 0} \frac{f(x + hv) - f(x)}{h} ,
\end{equation}
and then, given an inner product $\langle \cdot , \cdot \rangle$, the gradient is defined via the identity
\begin{equation} \label{gradient_eq}
    \langle \nabla f(x) , v \rangle = Df(x)[v] \qquad \forall v .
\end{equation}
In the case of Riemannian manifolds, one uses the Riemannian metric $\langle \cdot , \cdot \rangle_x$ in place of the inner product.

The state space in the Lagrangian mean-field model is given by the set of phase functions $\Theta := \{ \theta: \I \to \T \}$. It is neither finite-dimensional nor a vector space. Rather, it has the structure of an infinite-dimensional Riemannian manifold (sometimes called a \emph{Hilbert} manifold). To make this rigorous, we may identify $\Theta$ with the space of $L^2$ functions from $\I$ to $\R$ with values identified modulo $2\pi$. The tangent spaces are then all identified with $L^2(\I;\R)$ and inherit the standard $L^2$ inner product $\langle \cdot , \cdot \rangle$. Importantly, $\Theta$ is a quotient of $L^2(\I;\R)$ by the action of $2\pi \mathbb{Z}$, and the quotient map is a local isometry. Thus $\Theta$ is flat, and for our purposes we may simply work in local coordinates, taking derivatives in $\Theta$ just as we would in $L^2$.

Thus, we would like to write the gradient flow as
\begin{equation} \label{gradient_flow_eq}
    \partial_t \theta = - \nabla E (\theta) ,
\end{equation}
taking the gradient to be defined as above. In order to do this rigorously, however, we first need to verify the differentiability of $E$. This is handled by the following proposition.

\begin{proposition} \label{regularity_prop}
    Supposing that $K$ is uniformly bounded, the energy function \eqref{lagrangian_energy} is (Fréchet) differentiable on $\Theta$, with gradient
    \begin{equation} \label{lagrangian_gradient_eq}
        \nabla E(\theta)(x) = - \int K(x,y) \sin(\theta(x) - \theta(y)) \, \mathrm{d}y + K_s \sin(2\theta(x)) .
    \end{equation}
    Furthermore, the map $\nabla E: \Theta \to T\Theta$ is bounded and globally Lipschitz.
\end{proposition}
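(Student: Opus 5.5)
We work in local coordinates, treating $\theta$ as an element of $L^2(\I;\R)$. Since $\Theta$ is the quotient of $L^2(\I;\R)$ by $2\pi\mathbb{Z}$ through a local isometry, and $E$ is $2\pi$-periodic in each value, it suffices to prove the claims for $E$ on $L^2(\I;\R)$. Write $M:=\sup|K|<\infty$ and assume $\I$ has unit measure. Define $G(\theta)$ to be the right-hand side of \eqref{lagrangian_gradient_eq}. The plan has three steps.

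\textbf{Step 1 (Fréchet expansion).} Show that $E(\theta+v)-E(\theta)-\langle G(\theta),v\rangle = o(\|v\|_{L^2})$. Taylor's theorem for $\cos$ with Lagrange remainder gives, pointwise,
\[
\cos(a+h)-\cos a+\sin(a)\,h = R(a,h), \qquad |R|\le \tfrac12 h^2 .
\]
For the interaction term we apply this with $a=\theta(x)-\theta(y)$ and $h=v(x)-v(y)$. The linear part is $-\tfrac12\iint K(x,y)\sin(\theta(x)-\theta(y))(v(x)-v(y))\,dx\,dy$. Because $K$ is symmetric and $\sin$ is odd, the swap $x\leftrightarrow y$ turns the $v(y)$ term into a copy of the $v(x)$ term. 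Hence the linear part equals $-\iint K\sin(\theta(x)-\theta(y))\,v(x)\,dx\,dy$, which is the first term of $\langle G(\theta),v\rangle$. The remainder is bounded by
\[
\tfrac{M}{4}\iint (v(x)-v(y))^2 \le M\|v\|_2^2 .
\]
The injection term is handled the same way: its linear part is $K_s\int\sin(2\theta)\,v$, and its remainder is at most $K_s\|v\|_2^2$. So the total error is $O(\|v\|_2^2)$, which gives Fréchet differentiability. We should also confirm that $G(\theta)\in L^2$. This is immediate, since $|G(\theta)(x)|\le M+K_s$ pointwise.

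\textbf{Step 2 (identification and boundedness).} Since $D E(\theta)[v]=\langle G(\theta),v\rangle$, the Riesz identity \eqref{gradient_eq} gives $\nabla E=G$. The pointwise bound from Step 1 then yields $\|\nabla E(\theta)\|_{L^2}\le M+K_s$ uniformly in $\theta$.

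\textbf{Step 3 (global Lipschitz).} Use that $\sin$ is $1$-Lipschitz and $\sin(2\cdot)$ is $2$-Lipschitz. Then
\[
|G(\theta)(x)-G(\phi)(x)| \le M\int\big(|\theta(x)-\phi(x)|+|\theta(y)-\phi(y)|\big)\,dy + 2K_s|\theta(x)-\phi(x)| .
\]
Taking $L^2$ norms, and using Cauchy--Schwarz to get $\|\cdot\|_{L^1}\le\|\cdot\|_{L^2}$ on a unit-measure set, we obtain
\[
\|G(\theta)-G(\phi)\|_2\le (2M+2K_s)\|\theta-\phi\|_2 .
\]

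The main point needing care is compatibility with the quotient. The $L^2$ distance on $\Theta$ is the infimum over representatives, so the Lipschitz estimate must hold for the minimizing representatives. This works because $G$ is invariant under adding $2\pi\mathbb{Z}$-valued functions. We therefore pick representatives with pointwise difference reduced modulo $2\pi$ to $[-\pi,\pi]$ and apply Step 3 to them. The remaining steps are routine.
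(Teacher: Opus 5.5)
Your proof is correct and follows essentially the same route as the paper's appendix proof: a second-order Taylor expansion of $\cos$ with quadratic remainder for Fréchet differentiability, the $x \leftrightarrow y$ symmetry swap to identify the gradient, the pointwise bound $M+K_s$ for boundedness, and the Lipschitz property of $\sin$, which yields the same constant $2(M+K_s)$. The one minor difference is how you pass to the quotient. You apply the estimate directly to representatives whose pointwise difference is reduced to $[-\pi,\pi]$, whereas the paper combines local Lipschitz bounds with the length-space structure of $\Theta$; yours is arguably more direct, since the pointwise estimate already holds for arbitrary real representatives.
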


\begin{proof}
    See Appendix \ref{regularity_prop_proof}.
\end{proof}

This level of regularity ($E$ differentiable with Lipschitz derivative) may be summarized by saying that $E$ belongs to the regularity class $C^{1,1}$. In this case\footnote{Much recent effort has been devoted to studying gradient flows both in less structured spaces (e.g. metric spaces \cite{LA::2005::GradFlows}), and with less regular energy functions. Our setting (Hilbert manifold state space with $C^{1,1}$ energy) is relatively ``tame'' in this respect.}, we may write the gradient flow as \eqref{gradient_flow_eq} with no ambiguity as to what is meant. This immediately allows us to verify that the Lagrangian dynamics \eqref{lagrangian_dynamics} are indeed the gradient flow dynamics of the Lagrangian energy \eqref{lagrangian_energy}.

\begin{corollary} \label{gradient_flow_cor}
    The dynamics \eqref{lagrangian_dynamics} are the gradient flow dynamics of the energy \eqref{lagrangian_energy} with respect to the $L^2$ Riemannian structure on $\Theta$.
\end{corollary}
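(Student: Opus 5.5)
The corollary is essentially immediate from Proposition~\ref{regularity_prop}; the plan is to combine the gradient formula \eqref{lagrangian_gradient_eq} with the definition \eqref{gradient_flow_eq} of the gradient flow.

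First, we fix the meaning of ``gradient flow'' on $\Theta$. We work in local coordinates, identifying tangent spaces with $L^2(\I;\R)$ via the local isometry from $L^2(\I;\R)$ onto $\Theta = L^2(\I;\R)/2\pi\mathbb{Z}$. The gradient flow of $E$ is then the evolution $\partial_t\theta = -\nabla E(\theta)$. Here $\nabla E(\theta)$ is the Riesz representative in the $L^2$ inner product of the Fr\'echet derivative $DE(\theta)$, as in \eqref{first_variation_eq}--\eqref{gradient_eq}.

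Second, we invoke Proposition~\ref{regularity_prop}. It gives that $E$ is Fr\'echet differentiable with $\nabla E$ given by \eqref{lagrangian_gradient_eq}. Negating that formula yields
\begin{equation*}
-\nabla E(\theta)(x) = \int K(x,y)\sin(\theta(x)-\theta(y))\,\mathrm{d}y - K_s\sin(2\theta(x)),
\end{equation*}
which is exactly the right-hand side of \eqref{lagrangian_dynamics}. So \eqref{lagrangian_dynamics} and \eqref{gradient_flow_eq} are the same equation on $\Theta$.

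Third, we check that this identification is meaningful as an ODE on the Hilbert manifold. By Proposition~\ref{regularity_prop}, $\nabla E$ is bounded and globally Lipschitz. The Picard--Lindel\"of theorem in the Banach space $L^2$, transported through the local isometry, then gives global existence and uniqueness of $C^1$ solutions of \eqref{gradient_flow_eq} for every initial phase function. Hence solutions of \eqref{lagrangian_dynamics} are precisely the gradient flow trajectories. As a consistency check, the chain rule gives $\frac{\mathrm d}{\mathrm dt}E(\theta(t)) = -\|\nabla E(\theta(t))\|_{L^2}^2$.

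The only potential subtlety is well-definedness modulo $2\pi$. The integrand in \eqref{lagrangian_gradient_eq} is $2\pi$-periodic in $\theta(x)$ and $\theta(y)$, so the vector field descends to the quotient $\Theta$. Because the quotient map is a local isometry, the metric used in the Riesz representation is the standard $L^2$ one, so no further work is needed.
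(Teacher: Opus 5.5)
Your proposal is correct and is essentially the paper's argument: the paper reads the corollary off the gradient formula \eqref{lagrangian_gradient_eq} in Proposition~\ref{regularity_prop}, noting that $-\nabla E(\theta)$ is exactly the right-hand side of \eqref{lagrangian_dynamics}, and your Picard--Lindel\"of paragraph is extra (the paper states well-posedness as the next corollary). One minor caution, inherited from the paper's setup: the quotient map from real-valued $L^2$ lifts onto $\Theta$ is not literally a local isometry, because for every $\varepsilon>0$ adding $2\pi\mathbf{1}_S$ with $|S|$ small moves a lift by less than $\varepsilon$ in $L^2$ without changing its phase, so the map is injective on no ball; the step that actually does the work is your invariance remark in the last paragraph (solve on the lift and project), not ``transporting through a local isometry.''
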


\begin{proof}
    Follows directly from Proposition \ref{regularity_prop}.
\end{proof}

Since the gradient $\nabla E$ is Lipschitz, we may then also conclude that these gradient flow dynamics are well-posed.

\begin{corollary}
    The gradient flow dynamics \eqref{lagrangian_dynamics} are globally well-posed.
\end{corollary}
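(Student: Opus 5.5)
The plan is to invoke the Cauchy--Lipschitz (Picard--Lindelöf) theorem for ODEs on Banach spaces, applied to the vector field $F(\theta) := -\nabla E(\theta)$. By Proposition \ref{regularity_prop}, $F$ is bounded and globally Lipschitz. Since $\Theta$ is only a quotient of $L^2(\I;\R)$ by $2\pi\mathbb{Z}$, we first lift the problem to the Hilbert space $L^2(\I;\R)$. Define $\tilde F(\tilde\theta)(x) := \int K(x,y)\sin(\tilde\theta(x)-\tilde\theta(y))\,\mathrm{d}y - K_s\sin(2\tilde\theta(x))$ for $\tilde\theta \in L^2(\I;\R)$. This expression is $2\pi$-periodic in each value of $\tilde\theta$, so it is invariant under the deck action and descends to $F$ on $\Theta$. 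The bound and Lipschitz estimate of Proposition \ref{regularity_prop} are pointwise trigonometric estimates. Using $|\sin a - \sin b| \le |a-b|$, Cauchy--Schwarz, and $\|K\|_\infty<\infty$, they give on the lift
\[
\|\tilde F(\tilde\theta)-\tilde F(\tilde\phi)\|_{L^2} \le L\|\tilde\theta-\tilde\phi\|_{L^2}
\]
with $L \le 2\|K\|_\infty + 2K_s$. We also have $\|\tilde F(\tilde\theta)\|_{L^2}\le \|K\|_\infty + K_s$.

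With a globally Lipschitz vector field on a Banach space, the Picard iteration $\tilde\theta_{n+1}(t) = \tilde\theta_0 + \int_0^t \tilde F(\tilde\theta_n(s))\,\mathrm{d}s$ is a contraction on $C([0,T];L^2)$ for $T < 1/L$. This gives a unique local solution. Because $L$ is independent of the initial datum, the solution can be restarted from $\tilde\theta(T)$ with the same step, so it extends uniquely to all of $\mathbb{R}$, both forward and backward. Equivalently, one can use the weighted norm $\sup_t e^{-2Lt}\|\cdot\|$ to get a contraction on $[0,T]$ for every $T$ in one step. Grönwall's inequality then gives continuous dependence on initial data:
\[
\|\tilde\theta(t)-\tilde\phi(t)\| \le e^{L|t|}\|\tilde\theta_0-\tilde\phi_0\|.
\]
Boundedness of $\tilde F$ further shows that trajectories are Lipschitz in time, so no blow-up can occur. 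This is consistent with, though not needed for, the global Lipschitz argument.

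Finally, we project back to $\Theta$. Two lifts of the same initial phase function differ by a function $2\pi k(x)$ with $k$ integer-valued. Since $\tilde F$ is invariant under such shifts, if $\tilde\theta(t)$ solves the lifted equation then so does $\tilde\theta(t)+2\pi k$. By uniqueness, the two lifted solutions differ by $2\pi k$ for all $t$, so the projected trajectory in $\Theta$ is well defined. Uniqueness in $\Theta$ follows because any solution curve in $\Theta$ lifts locally, via the local isometry, to a solution in $L^2$. Continuous dependence follows because the quotient map is $1$-Lipschitz. The main obstacle I expect is this passage between $\Theta$ and its cover: lifting continuous curves in $\Theta$ to $L^2$. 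This is a local-isometry and path-lifting argument. If it proves delicate, I would instead define solutions directly as solutions of the lifted equation, taken modulo $2\pi$, which is how the paper treats derivatives in local coordinates.
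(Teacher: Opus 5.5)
Your proposal is correct and is essentially the paper's argument. It applies Picard--Lindel\"of to the bounded, globally Lipschitz field $-\nabla E$ of Proposition~\ref{regularity_prop}, and your uniform-step restart on the $L^2$ lift makes precise the paper's remark that solutions cannot escape the bounded, complete space $\Theta$ in finite time.

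One small caution on the uniqueness step. With values identified pointwise modulo $2\pi$, the quotient map $L^2\to\Theta$ is not actually a local isometry: $\tilde\theta$ and $\tilde\theta+2\pi\mathbf{1}_{[0,\varepsilon]}$ are $L^2$-close yet identified. So rather than lifting ``via the local isometry,'' lift solution curves pointwise in $x$ through the covering $\mathbb{R}\to\mathbb{T}$; the lift stays in $L^2$ because $|\partial_t\theta|\le\|K\|_\infty+K_s$. Alternatively, simply use your fallback definition of solutions via the lifted equation.
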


\begin{proof}
    Since $\nabla E$ is Lipschitz, the Picard–Lindelöf theorem applies to give local well-posedness. Since $\Theta$ is bounded and complete, solutions cannot escape in finite time, so this local solution extends globally.
\end{proof}

\subsection{Fixed Points and Stability Conditions} \label{subsec: fixed points and stability}

The gradient flow structure implies that the energy $E$ is a Lyapunov function for its own gradient flow. This places strong constraints on the behavior of trajectories and allows us to characterize fixed points and their stability in terms of the Riemannian gradient $\nabla E$ and Hessian $\nabla^2 E$. However, the treatment of stability is somewhat delicate in our setting, and we spend some time on this issue and its implications.

As a direct consequence of the characterization \eqref{gradient_flow_eq}, fixed points of the dynamics \eqref{lagrangian_dynamics} may be identified with stationary points of the energy \eqref{lagrangian_energy}.

\begin{lemma} \label{fixed_point_lemma}
    A point $\theta \in \Theta$ is a fixed point of the gradient flow dynamics \eqref{lagrangian_dynamics} if and only if $\nabla E(\theta)=0$.
\end{lemma}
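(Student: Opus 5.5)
The plan is to reduce the statement to the identification of the vector field in \eqref{lagrangian_dynamics} with $-\nabla E$, supplied by Proposition \ref{regularity_prop} and Corollary \ref{gradient_flow_cor}, together with the uniqueness of solutions from global well-posedness.

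First we fix the meaning of ``fixed point'': $\theta\in\Theta$ is a fixed point if the constant trajectory $\theta(t)\equiv\theta$ solves \eqref{lagrangian_dynamics}. By Corollary \ref{gradient_flow_cor} this equation is $\partial_t\theta=-\nabla E(\theta)$, understood as an ODE on the Hilbert manifold $\Theta$. Locally, in the chart given by the $2\pi\mathbb{Z}$-quotient from $L^2(\I;\R)$, it is an ODE in $L^2$ with a globally Lipschitz right-hand side.

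For the forward direction, suppose the constant curve solves the equation. Its time derivative is $0$ in $T_\theta\Theta\cong L^2(\I;\R)$, so $-\nabla E(\theta)=0$ as an element of $L^2$. Here we use the explicit formula \eqref{lagrangian_gradient_eq}, which is well defined as an $L^2$ function by boundedness of $K$.

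For the converse, suppose $\nabla E(\theta)=0$. Then the constant curve satisfies $\partial_t\theta(t)=0=-\nabla E(\theta(t))$ for all $t$, so it is a solution. By the well-posedness corollary (Picard--Lindel\"of with Lipschitz $\nabla E$), it is the unique solution starting from $\theta$, and hence $\theta$ is an equilibrium.

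The only genuinely delicate point is the identification of tangent vectors in the quotient space. The derivative of a curve in $\Theta$ must be computed in local coordinates, and equality holds in $L^2$, that is, almost everywhere in $x$. We handle this by noting that the quotient map is a local isometry, so zero velocity in $\Theta$ is the same as zero velocity of a local lift in $L^2$. With this observed, both directions are immediate.
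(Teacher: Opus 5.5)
Your proposal is correct and is essentially the paper's argument. The paper reads the equivalence directly off the characterization $\partial_t\theta=-\nabla E(\theta)$ from Corollary~\ref{gradient_flow_cor}; your extra care about the local $L^2$ chart is harmless, and the appeal to uniqueness is not actually needed once you define a fixed point as a constant solution.
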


\begin{proof}
    Follows immediately from the characterization \eqref{gradient_flow_eq}.
\end{proof}

The characterization of stability of these fixed points then leverages the fact that $E$ is a Lyapunov function for its own gradient flow, with trajectories satisfying the following \emph{Energy Dissipation Equality} (EDE).

\begin{lemma} \label{ede_lem}
    Let $\theta(t)$ denote any trajectory of the dynamics \eqref{lagrangian_dynamics}. Then $E(\theta(t))$ satisfies the following \emph{Energy Dissipation Equality} (EDE):
    \begin{equation} \label{energy_dissipation_equality}
        \frac{d}{dt} E(\theta(t)) = -\| \nabla E(\theta(t)) \|_{L^2}^2 .
    \end{equation}
    As a consequence, $E(\theta(t))$ is monotone nonincreasing.
\end{lemma}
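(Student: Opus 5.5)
The plan is to obtain the EDE from a chain rule for $C^{1,1}$ functions along $C^1$ curves in the Hilbert manifold $\Theta$. By Proposition \ref{regularity_prop}, $E$ is Fréchet differentiable on $\Theta$, and $\nabla E$ is bounded and globally Lipschitz. By the well-posedness corollary, every trajectory $\theta(t)$ of \eqref{lagrangian_dynamics} is a classical solution of $\partial_t \theta = -\nabla E(\theta)$. Since the right-hand side is continuous in $t$, the curve $t \mapsto \theta(t)$ is $C^1$ into $\Theta$, with derivative $\dot\theta(t) = -\nabla E(\theta(t)) \in L^2(\I;\R)$.

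Because $\Theta$ is a quotient of $L^2(\I;\R)$ by $2\pi\mathbb{Z}$ and the quotient map is a local isometry, we may work in local coordinates. Fix $t$ and lift a short piece of the trajectory near $\theta(t)$ to $L^2$. There we apply the chain rule for a Fréchet differentiable function composed with a differentiable curve:
\begin{equation*}
\frac{d}{dt}E(\theta(t)) = DE(\theta(t))[\dot\theta(t)].
\end{equation*}
To justify it, write
\begin{equation*}
E(\theta(t+h)) - E(\theta(t)) = DE(\theta(t))[\theta(t+h)-\theta(t)] + o(\|\theta(t+h)-\theta(t)\|_{L^2}).
\end{equation*}
Since $\|\theta(t+h)-\theta(t)\|_{L^2} = O(h)$ by the Lipschitz bound on the flow, dividing by $h$ and letting $h\to 0$ gives the claim.

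Next we use the defining identity \eqref{gradient_eq}, which gives $DE(\theta)[v] = \langle \nabla E(\theta), v\rangle$. Substituting $v = \dot\theta = -\nabla E(\theta)$ yields $\frac{d}{dt}E(\theta(t)) = -\|\nabla E(\theta(t))\|_{L^2}^2$, which is \eqref{energy_dissipation_equality}. The right-hand side is nonpositive, so the mean value theorem (or integrating in $t$) shows that $E(\theta(t))$ is monotone nonincreasing.

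The only delicate point is justifying the chain rule on the non-vector space $\Theta$. The local-isometry lifting handles this: the energy \eqref{lagrangian_energy} involves only $2\pi$-periodic functions of $\theta$, so it is invariant under the $2\pi\mathbb{Z}$ action and lifts to a well-defined Fréchet differentiable function on $L^2$ with the same gradient. Everything else is routine.
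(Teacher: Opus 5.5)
Your proposal is correct and follows essentially the same route as the paper. Both arguments combine the chain rule $\frac{d}{dt}E(\theta(t)) = DE(\theta(t))[\partial_t\theta(t)]$, the Riesz identity \eqref{gradient_eq}, and the substitution $\partial_t\theta = -\nabla E(\theta)$. You add a justification of the chain-rule step via Fr\'echet differentiability and a lift to $L^2$, which the paper takes for granted, and you deduce monotonicity directly from the sign of the derivative, which is more apt than the paper's appeal to Gronwall.
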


\begin{proof}
    Using the chain rule and the characterizations \eqref{gradient_eq} and \eqref{gradient_flow_eq}, we have
    \begin{multline}
        \frac{d}{dt} E(\theta(t)) = DE(\theta(t))[\partial_t \theta(t)] = \langle \nabla E (\theta(t)) , \partial_t \theta(t) \rangle \\
        = \langle \nabla E (\theta(t)) , - \nabla E (\theta(t)) \rangle = -\| \nabla E (\theta(t)) \|_{L^2}^2 .
    \end{multline}
    By Gronwall, $E(\theta(t))$ is therefore monotone nonincreasing.
\end{proof}

With a Lyapunov function in hand, one then usually hopes to conclude that (1) fixed points are asymptotically stable if and only if they are local minima of $E$, and (2) local minima of $E$ can be characterized by the positivity of the Hessian $\nabla^2E$. Since one usually expects that (3) ``most'' trajectories of a gradient flow limit to asymptotically stable fixed points, this would allow one to characterize generic limiting behavior entirely in terms of the Hessian. And indeed, in the case where the state space is finite-dimensional and the energy function is $C^2$, this reasoning goes through more or less exactly. However, in our setting (infinite-dimensional state space and $C^{1,1}$ energy function), this is a more delicate matter and the story takes a bit more work to unpack. The points (1) and (2) are replaced by the development in the rest of this section, while the point (3) is deferred to Section \ref{subsec: convergence question}.

We start with the following observation.

\begin{observation} \label{finite_dimenional_observation}
    The mean-field models \eqref{lagrangian_energy}-\eqref{lagrangian_dynamics} and \eqref{eulerian_energy}-\eqref{eulerian_dynamics} that we study, while themselves infinite-dimensional, are really proxies for large but \emph{finite} systems (whether real or simulated). It is therefore worth keeping in mind which properties transfer, which do not, and which are relevant for the analysis at hand. In particular, for the purposes of predicting behavior in real-world systems, one may often use finite-dimensional intuitions which do not necessarily apply to the infinite-dimensional limit.
\end{observation}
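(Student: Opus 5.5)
Since Observation~\ref{finite_dimenional_observation} is a methodological remark rather than a sharp theorem, I would justify it by making precise the sense in which finite systems sit inside the mean-field models. I would then sort properties into those that pass between the two settings and those that do not.

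The first step is to exhibit the finite system \eqref{discrete_dynamics} as an exact invariant subsystem of the Lagrangian model. Partition $\I=[0,1]$ into $N$ intervals $I_1,\dots,I_N$ of length $1/N$. To a symmetric matrix $A$ associate the step graphon $K_A(x,y)=N A_{ij}$ for $(x,y)\in I_i\times I_j$, and to $\theta\in\T^N$ associate the step phase function $\bar\theta(x)=\theta_i$ for $x\in I_i$. Substituting into \eqref{lagrangian_dynamics}, the right-hand side is constant on each $I_i$ and equals the right-hand side of \eqref{discrete_dynamics}. Hence the closed subspace of step functions is invariant. By the global well-posedness corollary, the unique Lagrangian trajectory started from $\bar\theta(0)$ is the step-function lift of the finite trajectory. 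The same substitution in \eqref{lagrangian_energy} gives $E(\bar\theta)=\tfrac1N$ times \eqref{continuous_surrogate}, so the finite system is the restriction of the $L^2$ gradient flow (Corollary~\ref{gradient_flow_cor}) to this subspace, with the metric rescaled by $1/N$. In the constant-$K$ case the pushforward $\bar\theta_\#\leb$ is the empirical measure $\tfrac1N\sum_i\delta_{\theta_i}$, so empirical measures of finite systems are exact solutions of \eqref{eulerian_dynamics}.

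Next I would list what transfers through this embedding. Lemma~\ref{fixed_point_lemma} and Lemma~\ref{ede_lem} restrict directly: a fixed point of the finite system lifts to a fixed point of the mean-field model, and the EDE holds along lifted trajectories. First-order conditions also transfer, since $\nabla E(\bar\theta)$ is itself a step function.

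I would then list what need not transfer, and say why. Unit balls in $L^2$ are not compact, so boundedness of $\Theta$ no longer yields precompact trajectories. Stationarity of $E$ together with a nonnegative Hessian no longer guarantees a strict local minimum, because the Hessian can have spectrum accumulating at $0$. Finally, $E$ is only $C^{1,1}$ on $\Theta$, so a Taylor expansion to second order is not available in every direction.

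The main obstacle is to make "finite-dimensional intuition is relevant" more than heuristic. One needs a statement of the form: conclusions drawn from the Hessian on finite-dimensional step subspaces, uniform in $N$, predict the behavior of large finite systems. For this I would rely on the invariance argument above. Stability of a lifted fixed point within the step subspace is stability of the original finite system, while instabilities that exist only in genuinely infinite-dimensional directions are invisible to every finite system. This is the precise sense in which the infinite-dimensional pathologies may be set aside for predictive purposes, and it justifies reasoning about the spectrum of $\nabla^2E$ restricted to step subspaces.
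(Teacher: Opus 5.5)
Your step-graphon embedding is correct, and it sharpens what the paper does. With $K_A=NA_{ij}$ on $I_i\times I_j$, the right-hand side of \eqref{lagrangian_dynamics} reproduces \eqref{discrete_dynamics} exactly, $E(\bar\theta)$ equals \eqref{continuous_surrogate} divided by $N$, and uniqueness makes the step subspace invariant. The paper uses the same embedding (with norms normalized to agree) only to read off the scaling $\|\nabla^k E_N\|\asymp N^{k/2-1}$. From that scaling it gets its list: first and second derivatives and continuity of the first derivative transfer, while analyticity and continuity of $\theta\mapsto\nabla^2E(\theta)$ do not.

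The gap is in your account of what fails to transfer, which is the substantive content of the observation. You attribute the failure of ``positive Hessian $\Rightarrow$ local minimum'' to spectrum of $\nabla^2E$ accumulating at $0$. The paper stresses that this is \emph{not} the obstruction here. At the binarized fixed points the Hessian is uniformly positive definite (Appendix~\ref{sufficient_stability_appendix} gives lower bounds such as $(2K_s-K)\|p\|^2$), yet these points are not local minima. The obstruction is the discontinuity of $\theta\mapsto\nabla^2E(\theta)$, i.e.\ the $k\geq 3$ blow-up in the scaling above. Likewise, a second-order Taylor expansion \emph{is} available along every direction; this is exactly how Lemma~\ref{stability_lemma} produces strong directional local minima. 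What is lost is uniformity of the remainder across directions, since $\varepsilon_p$ depends on $p$. (Also, a merely nonnegative Hessian does not give a strict local minimum even in finite dimensions.)

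This misdiagnosis carries into your closing claim. The perturbations that defeat local minimality are not ``genuinely infinite-dimensional directions''; they are step functions. For example, moving a fraction $\delta$ of the oscillators from $0$ to $\pi$ lands on another binarized fixed point at $L^2$-distance $\pi\sqrt{\delta}$. Its energy $\tfrac{K}{2}(2m-1)^2-\tfrac{K_s}{2}$ is lower by an amount of order $\delta$ in one of the two directions whenever $0<m<1$ and $K(2m-1)\neq 0$. Every sufficiently large finite system can represent such moves. What protects a fixed finite system is that these moves have norm at least $\pi N^{-1/2}$, so they lie outside a small enough neighborhood. The price is that the ball on which the finite Hessian controls the energy shrinks like $N^{-1/2}$, because the third derivative grows like $N^{1/2}$.

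With the mechanism stated this way, your restriction argument delivers what the paper needs. Uniform positivity of $\nabla^2E(\bar\theta)$ on $L^2$ restricts to uniform positivity on each step subspace. By analyticity of the finite energy, this gives a strict local minimum and asymptotic stability of the finite system. Conversely, a negative direction of $\nabla^2E(\bar\theta)$ can be approximated by step directions on refined partitions, on which $\bar\theta$ is still a step function. So it destabilizes every sufficiently large finite system.
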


For example, while the finite-dimensional energy functions \eqref{continuous_surrogate} are real-analytic, the infinite-dimensional energy function \eqref{lagrangian_energy} is merely\footnote{This loss of differentiability may be understood by embedding $\mathbb{T}^N$ into $L^2(I;\mathbb{T})$ by step functions, normalizing so that the discrete and continuum norms agree, and observing that the derivatives of the finite-dimensional energies scale as $\| \nabla^k E_N \| \asymp N^{k/2-1}$. Thus, the first and second derivatives are stable in the $N \to \infty$ limit, while the higher-order derivatives blow up -- consistent with the resulting $C^{1,1}$ regularity class.} $C^{1,1}$. First- and second-derivative estimates transfer, while higher-order derivative information does not. Additionally, continuity of the first derivative $\theta \mapsto \nabla E(\theta)$ transfers, while continuity of the second derivative $\theta \mapsto \nabla^2 E(\theta)$ does not. This continuity of the second derivative is crucial if one wishes to conclude that a fixed point is a local minimum (and thus asymptotically stable) from the positivity of the Hessian $\nabla^2 E(\theta)$. But since the second derivative $\nabla^2 E(\theta)$ transfers, and this second derivative is continuous in the \emph{finite-dimensional} case, one may check the positivity of $\nabla^2 E(\theta)$ in the \emph{infinite-dimensional model} to draw conclusions about the stability of $\theta$ \emph{as observed in practice}, even though the same conclusion does not necessarily follow for the infinite-dimensional model itself. This same sort of reasoning is applied again in the study of limiting behavior and convergence of trajectories in Sections \ref{subsec: limiting behavior} and \ref{subsec: convergence question}.

The upshot of this is that the positivity of the Hessian $\nabla^2E$ of the Lagrangian energy \eqref{lagrangian_energy} still correctly predicts the limiting behavior of observed trajectories, although it does not imply that the associated fixed points for the infinite-dimensional model are local minima or asymptotically stable. Rather, it only implies a weaker notion of \emph{directional local minimum} which is necessary but not sufficient for true asymptotic stability. We explain this as follows.

We define the Riemannian Hessian $\nabla^2 E$ in the same manner as the Riemannian gradient, i.e., as the Riesz representation of the second variation. We first define the second variation as the symmetric bilinear operator
\begin{equation} \label{second_variation_eq}
    D^2E(\theta)[p,q] := \lim_{h,\ell \to 0} \frac{E(\theta + hp + \ell q) - E(\theta+hp) - E(\theta+\ell q) + E(\theta)}{h \ell} ,
\end{equation}
and then, the Hessian is defined via the identity
\begin{equation}
    D^2E(\theta)[p,q] = \langle p , \nabla^2 E(\theta) \, q \rangle \qquad \forall p,q .
\end{equation}

The $C^{1,1}$ regularity class tells us that the map $\theta \mapsto \nabla^2 E(\theta)$ is bounded but not continuous. This blocks one from using the usual Taylor expansion argument to conclude that $\theta$ is a local minimum from the positivity of $\nabla^2E(\theta)$. However, while the map $\theta \mapsto \nabla^2E(\theta)$ is not continuous, when restricted to any given direction $p$, the map $h \mapsto \langle p , \nabla^2 E(\theta + hp) p \rangle$ is. We may thus Taylor expand along any given direction $p$ to conclude that if $\nabla^2 E(\theta)$ is positive, then $\theta$ is a \emph{directional local minimum}.

\begin{definition}[Directional Local Minimum]
    We say that a point $\theta \in \Theta$ is a \emph{directional local minimum} of $E$ if for every $p \in L^2$ there exists $\varepsilon_p > 0$ such that $E(\theta + hp) \geq E(\theta)$ for all $|h| < \varepsilon_p$. We say that it is a \emph{strong directional local minimum} if, moreover, there exists $\alpha > 0$, independent of $p$, such that $E(\theta + hp) \geq E(\theta) + \alpha \, h^2 \| p \|_{L^2}^2$ for every $p \in L^2$ and all $|h| < \varepsilon_p$.
\end{definition}

\begin{lemma} \label{stability_lemma}
    Let $\theta$ be a stationary point of the energy \eqref{lagrangian_energy}. Then:
    \begin{enumerate}[label=\alph*)]
        \item For $\theta$ to be a directional local minimum of \eqref{lagrangian_energy}, it is necessary that \\ $\langle p , \nabla^2 E(\theta) \, p \rangle \geq 0$ for all $p \in L^2$.
        \item For $\theta$ to be a strong directional local minimum of \eqref{lagrangian_energy}, it is sufficient that there exist $c > 0$ such that $\langle p , \nabla^2E(\theta) \, p \rangle \geq c \, \| p \|_{L^2}^2 $ for all $p \in L^2$.
    \end{enumerate}
\end{lemma}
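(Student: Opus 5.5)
The plan is to reduce everything to one-dimensional calculus along a fixed direction. Fix $p \in L^2$ and set $g_p(h) := E(\theta + hp)$. Two facts about $g_p$ come first. Since $\theta$ is stationary, Lemma \ref{fixed_point_lemma} together with \eqref{gradient_eq} gives $g_p'(0) = DE(\theta)[p] = \langle \nabla E(\theta), p \rangle = 0$. Next, $g_p$ is $C^2$ in $h$. To see this, substitute $\theta + hp$ into \eqref{lagrangian_energy}. The integrands are $K(x,y)\cos\big(\theta(x)-\theta(y) + h(p(x)-p(y))\big)$ and $\cos(2\theta(x) + 2hp(x))$, and their $h$-derivatives up to second order are bounded by $\|K\|_\infty\,|p(x)-p(y)|^2$ and $4K_s|p(x)|^2$. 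Both bounds are integrable because $p \in L^2$ and $K$ is bounded, so dominated convergence lets us differentiate under the integral twice. This gives
\begin{equation*}
g_p''(h) = -\frac{1}{2}\iint K(x,y)\cos\big(\theta_h(x)-\theta_h(y)\big)\big(p(x)-p(y)\big)^2\,\mathrm{d}x\,\mathrm{d}y + 2K_s\int \cos(2\theta_h(x))\,p(x)^2\,\mathrm{d}x,
\end{equation*}
where $\theta_h := \theta + hp$. The same dominated convergence argument shows that $g_p''$ is continuous in $h$, because the cosines converge pointwise as $h' \to h$. Comparing with \eqref{second_variation_eq} identifies $g_p''(h) = \langle p, \nabla^2E(\theta+hp)\,p\rangle$. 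This is exactly the directional continuity mentioned before the definition.

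For part a), argue by contrapositive. Suppose $\langle p, \nabla^2E(\theta)p\rangle = g_p''(0) < 0$ for some $p$. By continuity, $g_p'' < 0$ on some interval $|h| < \delta$. Taylor's theorem with Lagrange remainder, using $g_p'(0)=0$, then gives $g_p(h) = g_p(0) + \tfrac{1}{2}g_p''(\xi)h^2 < g_p(0)$ for all $0<|h|<\delta$. So no $\varepsilon_p$ can exist, and $\theta$ is not a directional local minimum.

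For part b), assume $g_p''(0) \ge c\|p\|^2$. Continuity of $g_p''$ gives $\varepsilon_p>0$ with $g_p''(\xi) \ge \tfrac{c}{2}\|p\|^2$ for $|\xi|<\varepsilon_p$. The same Taylor expansion then yields $E(\theta+hp) \ge E(\theta) + \tfrac{c}{4}h^2\|p\|^2$, so we may take $\alpha = c/4$, which does not depend on $p$. If $p=0$ the claim is trivial.

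The main obstacle is justifying the differentiation under the integral and the continuity of $g_p''$ for a general $L^2$ direction. One could be tempted to argue via continuity of $\theta\mapsto\nabla^2E(\theta)$, but that map is not continuous, so the argument must avoid it. What saves us is that along a single ray the dominating functions $|p(x)-p(y)|^2$ and $|p(x)|^2$ are fixed, and this is precisely why $\varepsilon_p$ must be allowed to depend on $p$. The constant, by contrast, inherits its independence from $p$ from the quadratic lower bound assumed in b).
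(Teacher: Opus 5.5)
Your proof is correct and takes the paper's route: restrict to the scalar function $h \mapsto E(\theta + hp)$, use that its second derivative $D^2E(\theta+hp)[p,p]$ is continuous in $h$, and Taylor expand with the linear term vanishing at a stationary point, which gives the same $p$-independent constant $\alpha = c/4$. You also justify the $C^2$ claim by dominated convergence, which the paper only asserts; one small slip there is that the first $h$-derivative is dominated by $\|K\|_\infty |p(x)-p(y)|$ rather than by its square, but that bound is equally integrable, so nothing changes.
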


\begin{proof}
    First, note that for each $p \in L^2$ the function $h \mapsto E(\theta + hp)$ is twice continuously differentiable, with second derivative $D^2E(\theta + hp)[p,p]$ depending continuously on $h$. We may then Taylor expand to obtain
    \begin{equation}
        E(\theta + h p) = E(\theta) + h DE(\theta)[p] + \frac{h^2}{2} D^2E (\theta) [p,p] + o(h^2) .
    \end{equation}
    The linear term vanishes since $\theta$ is a fixed point, and therefore, for sufficiently small $h$, the quadratic term dominates. Thus, if there exists $p$ such that $\langle p , \nabla^2 E(\theta) \, p \rangle < 0$, then $E(\theta+hp) < E(\theta)$ for small $h$, and $\theta$ cannot be a directional local minimum. On the other hand, suppose that $\langle p , \nabla^2 E(\theta) \, p \rangle \geq c \, \| p \|^2$. The remainder term satisfies $|o(h^2)| \leq \tfrac{c}{4} h^2 \| p \|^2$ for all $|h| < \varepsilon_p$, where the threshold $\varepsilon_p$ depends on the direction $p$, and therefore $E(\theta + hp) \geq E(\theta) + \tfrac{c}{4} h^2 \, \| p \|^2$ for all $|h| < \varepsilon_p$. Since the constant $\tfrac{c}{4}$ here is independent of $p$, if this holds for all $p \in L^2$, then $\theta$ is a strong directional local minimum.
\end{proof}

We note that in finite dimensions, with a $C^2$ energy function, being a directional local minimum is necessary for $\theta$ to be a true local minimum and an asymptotically stable fixed point, while being a strong directional local minimum is sufficient. It is interesting (and somewhat counterintuitive), however, that in our infinite-dimensional setting, this is no longer the case. While the necessary condition still holds, the sufficient condition fails: it is possible that a fixed point can be a strong directional local minimum and still not be a local minimum in the usual sense. Indeed, the system that we study in this paper provides an interesting case study here. In Section \ref{sec: stability}, we will show that the \emph{binarized} fixed points form a continuous one-parameter family of strong directional local minima which are not true local minima. This is a central feature of this problem and shapes the development throughout the entire paper.

We also emphasize that this phenomenon is distinct from the issue of \emph{uniform positivity} of the Hessian. Indeed, the Hessian in the sufficient condition (and at the binarized fixed points) is uniformly positive definite. The obstruction is not a lack of uniformity in the Hessian, but the failure of continuity of the map $\theta \mapsto \nabla^2 E(\theta)$.

Lastly, we provide a rigorous statement of the necessary condition alluded to above.

\begin{lemma}\label{lem: asym_to_dlm}
    Let $\theta$ be a fixed point of the dynamics \eqref{lagrangian_dynamics}. Then for $\theta$ to be asymptotically stable, it is necessary that $\theta$ be a directional local minimum of \eqref{lagrangian_energy}.
\end{lemma}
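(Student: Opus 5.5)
We prove the contrapositive: if $\theta$ is not a directional local minimum of \eqref{lagrangian_energy}, then $\theta$ is not asymptotically stable. The mechanism is the Lyapunov property of the energy from Lemma \ref{ede_lem}. Any trajectory that starts strictly below the energy level $E(\theta)$ stays strictly below it, so it cannot converge to $\theta$. The task is to produce such starting points arbitrarily close to $\theta$.

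First, suppose $\theta$ is not a directional local minimum. Then there is a direction $p \in L^2$ such that no $\varepsilon_p$ works. Concretely, there is a sequence $h_n \to 0$ with $E(\theta + h_n p) < E(\theta)$. Set $\theta_n := \theta + h_n p$. Then $\|\theta_n - \theta\|_{L^2} = |h_n|\,\|p\|_{L^2} \to 0$, so every $L^2$-neighborhood of $\theta$ contains some $\theta_n$ with $E(\theta_n) < E(\theta)$. (If $p$ happens to lie in the kernel direction, this is still fine; we only use the strict energy inequality.)

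Second, take the trajectory $\theta_n(t)$ of \eqref{lagrangian_dynamics} starting at $\theta_n$. It exists globally by the well-posedness corollary. By Lemma \ref{ede_lem}, $E(\theta_n(t)) \le E(\theta_n) < E(\theta)$ for all $t \ge 0$. By Proposition \ref{regularity_prop}, $E$ is Fréchet differentiable, hence continuous on $\Theta$ with respect to the $L^2$ metric. If $\theta_n(t) \to \theta$ in $L^2$, continuity would give $E(\theta_n(t)) \to E(\theta)$. This contradicts $\limsup_{t} E(\theta_n(t)) \le E(\theta_n) < E(\theta)$. Hence no $\theta_n$ is attracted to $\theta$.

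Third, we conclude. Asymptotic stability in particular requires attractivity: there is a neighborhood $U$ of $\theta$ such that every trajectory starting in $U$ converges to $\theta$. Since $U$ contains some $\theta_n$ and that trajectory does not converge, $\theta$ is not asymptotically stable.

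The only delicate point is bookkeeping. One must make sure asymptotic stability is understood with respect to the $L^2$ topology on $\Theta$, the quotient-by-$2\pi\mathbb{Z}$ metric, which is locally the $L^2$ norm. One must also note that $E$ is continuous in that same topology, which follows from the Lipschitz bound on $\nabla E$ in Proposition \ref{regularity_prop}. No second-order information is needed, which is why this necessary direction survives in infinite dimensions while the sufficient direction does not.
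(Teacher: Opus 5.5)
Your proposal is correct and follows the paper's own proof essentially step for step. Both argue by contrapositive: take a direction $p$ with $E(\theta+hp)<E(\theta)$ for arbitrarily small $h$, use the EDE to keep the energy along the resulting trajectory strictly below $E(\theta)$, and use continuity of $E$ to conclude the trajectory cannot approach $\theta$. The paper phrases this as the trajectory staying bounded away from $\theta$, while you phrase it as non-convergence; the latter already suffices to rule out attractivity.
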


\begin{proof}
    By contrapositive, suppose that $\theta$ is not a directional local minimum. Then by definition there exists a direction $p$ for which $E(\theta+hp) < E(\theta)$ for arbitrarily small $h$. Denote by $\theta_h(t)$ the trajectory of \eqref{lagrangian_dynamics} starting from $\theta+h p$. The EDE \eqref{energy_dissipation_equality} ensures that $E(\theta_h(t))$ is nonincreasing, thus by the continuity of $E$, $\theta_h(t)$ remains bounded away from $\theta$. Since $h$ may be taken arbitrarily small, $\theta$ cannot be asymptotically stable.
\end{proof}

\subsection{Special Structure in The Constant-$K$ Case} \label{subsec: constant k case}

In the case of symmetric oscillators where $K(x,y) \equiv K =$ constant, there is a great deal of additional structure in the problem. First, the invariance under relabeling allows one to choose nice representations for the phase functions $\theta$. Second, the dynamics of $\theta$ factor into a pointwise flow according to a vector field $v_\theta$. Third, the energy function $E(\theta)$ and vector field $v_\theta$ have finite parameterizations in terms of the first and second \emph{order parameters} of the system.

Recall from Section \ref{sec: problem description} that in the constant-$K$ case, the invariance under relabeling renders the index set extraneous. We may thus choose\footnote{This may be justified by the Borel isomorphism theorem, for instance.} it to be whatever is convenient for our analysis, say, $[0,1]$. Additionally, we may pick the initial state $\theta_0$ to have nice structure on this set. For example, by the monotone rearrangement theorem, we may pick $\theta_0$ to be monotone nondecreasing in our chosen coordinates on $\T$, say, $[0,2\pi)$.

Recall also from Section \ref{sec: problem description} that in this case, the dynamics factor into a pointwise flow of the form $\partial_t \theta(x) = v_\theta (\theta(x))$ for a vector field $v_\theta$ defined on the torus $\T$. The comparison principle ensures that the flow of this vector field preserves the ordering of integral curves, and thus the monotonicity of $\theta$. There is a slight technicality in how the evolution interacts with the choice of coordinate system, which is handled by the following definition and lemma.

\begin{definition}[monotonicity] \label{monotonicity_def}
    We say that a phase function $\theta:[0,1] \to \T$ is \emph{monotone} if there exists a coordinate system $\psi: \T \to [a,a+2\pi)$ in which $\psi \circ \theta$ is monotone in the usual sense: $x \leq y ~ \Rightarrow ~ \psi(\theta(x)) \leq \psi(\theta(y))$.
\end{definition}

\begin{lemma} \label{monotonicity_lemma}
    Let $\theta(t)$ denote the solution of \eqref{flow_eqn}-\eqref{vector_field_theta_eqn} starting from initial condition $\theta_0$. If $\theta_0$ is monotone (in the sense of Definition \ref{monotonicity_def}), then $\theta(t)$ is monotone for all $t$.
\end{lemma}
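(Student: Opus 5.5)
The plan is to treat the evolution as a nonautonomous flow on the circle and use the fact that such flows are orientation-preserving homeomorphisms. The first step is to freeze the trajectory. By the well-posedness corollary there is a unique global solution $\theta(t)$. Define the time-dependent vector field $w(t,\bullet) := v_{\theta(t)}(\bullet)$ on $\T$ by \eqref{vector_field_theta_eqn}. From its form,
\[
w(t,\phi) = K\big(a(t)\sin\phi - b(t)\cos\phi\big) - K_s\sin(2\phi),
\]
with $a(t)=\int\cos\theta(t)(y)\,\mathrm{d}y$ and $b(t)=\int \sin\theta(t)(y)\,\mathrm{d}y$. The coefficients $a,b$ are continuous in $t$, because $\theta(t)$ is continuous in $L^2$. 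Also $w$ is smooth in $\phi$, with a Lipschitz constant bounded uniformly by $|K|+2K_s$.

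Hence the nonautonomous ODE $\dot\phi = w(t,\phi)$ on $\T$ has a unique global flow $\Phi_t:\T\to\T$. The next step is to check that $\theta(t)(x)=\Phi_t(\theta_0(x))$ for a.e.\ $x$. Define $\tilde\theta(t)(x):=\Phi_t(\theta_0(x))$. It satisfies $\partial_t\tilde\theta(x) = w(t,\tilde\theta(x)) = v_{\theta(t)}(\tilde\theta(t)(x))$. The difference $\tilde\theta-\theta$ then obeys a linear Gronwall estimate in $L^2$, since $v_\theta(\phi)$ is Lipschitz jointly in $\phi$ and in $\theta\in L^2$. The two agree at $t=0$, so they coincide for all $t$.

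It then suffices to show that $\Phi_t$ is an orientation-preserving homeomorphism of $\T$ homotopic to the identity, and that such maps preserve monotonicity. Lift to $\R$: the lifted flow $\tilde\Phi_t$ solves a $2\pi$-periodic ODE. By uniqueness it is strictly increasing in the initial condition, since distinct integral curves cannot cross (the comparison principle). It also satisfies $\tilde\Phi_t(s+2\pi)=\tilde\Phi_t(s)+2\pi$.

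Now take $\psi$ with $\psi\circ\theta_0$ nondecreasing with values in $[a,a+2\pi)$. Define $g(x):=\tilde\Phi_t(\psi(\theta_0(x)))$, using the lift with $\psi$ as coordinate. Then $g$ is nondecreasing on $[0,1]$. Its total increase satisfies $g(1)-g(0)\le \tilde\Phi_t(a+2\pi)-\tilde\Phi_t(a)=2\pi$; strictly, the increase is less than $2\pi$ whenever $\psi\theta_0(1)<a+2\pi$, which always holds.

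Therefore $g$ takes values in $[g(0), g(0)+2\pi)$. Choose the new coordinate system $\psi'$ to be the chart onto $[g(0),g(0)+2\pi)$. Then $\psi'\circ\theta(t)=g$ is nondecreasing, which is monotonicity in the sense of Definition~\ref{monotonicity_def}.

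The main obstacle is the wrap-around technicality, i.e.\ ensuring the image stays within a single fundamental domain. This is handled by the degree-one equivariance $\tilde\Phi_t(s+2\pi)=\tilde\Phi_t(s)+2\pi$ together with strict monotonicity of the lift. A secondary technical point is the a.e.\ identification of the $L^2$ solution with the pointwise characteristics, which the Gronwall argument settles. If $\theta_0$ is only monotone after modification on a null set, the conclusion holds a.e., which is all that matters in $L^2$.
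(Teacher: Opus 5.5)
Your proposal is correct and follows essentially the same route as the paper: take the flow map $\Phi_t$ of the frozen field $v_{\theta(t)}$, use the comparison principle to get order preservation, and re-base the coordinate chart. You make explicit what the paper leaves implicit, namely the identification $\theta(t)=\Phi_t\circ\theta_0$ via Gronwall and the degree-one lift $\tilde\Phi_t(s+2\pi)=\tilde\Phi_t(s)+2\pi$ that controls wrap-around; your base point $\tilde\Phi_t(\psi(\theta_0(0)))$ works just as well as the paper's choice, which is the image of $a$.
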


\begin{proof}
    Let $\Phi_t: \T \to \T$ denote the flow map of $v_{\theta(t)}$. (Note that $v_\theta$ is smooth and uniformly bounded, so $\Phi$ is well-defined.) Supposing that $\theta_0$ is monotone in the coordinate system $\psi: \T \to [a,a+2\pi)$, then by the comparison principle and continuity of $\Phi$, $\theta(t)$ is monotone in the coordinate system $\psi': \T \to [b,b+2\pi)$, where $b := \psi \circ \Phi_t \circ \psi^{-1}(a)$.
\end{proof}

We emphasize that the above definition of monotonicity states that there \emph{exists} a coordinate system in which $\theta$ is monotone -- not that $\theta$ is necessarily monotone in the \emph{chosen} coordinate system $[0,2\pi)$. While the initial state $\theta_0$ can always be chosen to be monotone in this coordinate system, $\theta(t)$ may evolve so that it ``wraps'' around from $2\pi$ to $0$ somewhere in the interior of the interval $[0,1]$. This is depicted in Figure \ref{fig:wrapping}. That said, $\theta(t)$ can ``wrap'' a maximum of one time, and therefore, in any coordinate system, a monotone phase function $\theta$ has total variation bounded by $4 \pi$. We will use this fact later in Section \ref{subsec: limiting behavior} to prove the precompactness of trajectories.

\begin{figure}[!h]
    \centering
    \includegraphics[width=0.8\linewidth]{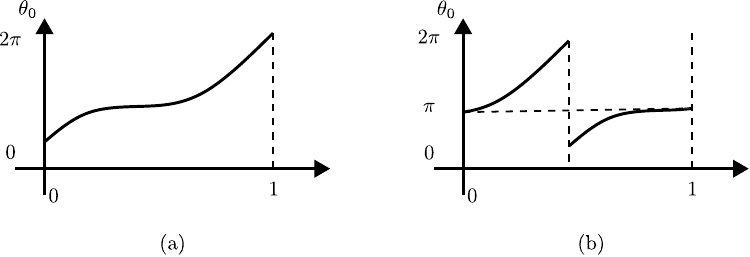}
    \caption{Illustration of a monotone phase function (a) and wrapping in coordinates (b). Note that the definition of monotonicity implies that at most one wrapping can occur.}
    \label{fig:wrapping}
\end{figure}
Additionally, the energy function $E(\theta)$ and vector field $v_\theta$ can be finitely parameterized in terms of the first and second \emph{order parameters} of the system. That is, defining the first and second order parameters by
\begin{equation} \label{order_parameters_eqn}
    z_1 := \int_0^1 e^{i \theta(x)} \, \mathrm{d}x , \qquad \qquad  z_2 := \int_0^1 e^{2i\theta(x)} \mathrm{d}x ,
\end{equation}
the energy function $E(\theta)$ and vector field $v_\theta$ admit the parameterizations
\begin{align}
    E(z_1,z_2) &= \frac{K}{2} |z_1|^2 - \frac{K_s}{2} \text{Re}(z_2) , \label{order_param_e}\\
    v_{z_1} &= -K \, \text{Im}(z_1 e^{-i \bullet }) - K_s \sin(2 \bullet) . \label{order_param_v}
\end{align}

The order parameters $z_1$ and $z_2$ are complex parameters, but can be given concrete physical interpretations. Using the change-of-variables formula \eqref{change_of_variables_eq}, the order parameters can equivalently be written in terms of $\rho$ as
\begin{equation} \label{order_parameters_rho_eqn}
    z_1 := \int_\T e^{i \theta} \rho(\theta) \, \mathrm{d}\theta , \qquad \qquad  z_2 := \int_\T e^{2i\theta} \rho(\theta) \, \mathrm{d}\theta .
\end{equation}
By identifying $\T$ with the unit circle in the complex plane and interpreting $\rho$ as a mass distribution on $\T$, the first order parameter $z_1$ represents the complex coordinate of the center of mass of $\rho$, while $z_2$ is the second harmonic of $\rho$, whose real part measures the concentration of $\rho$ around the $\{0,\pi\}$-axis.

The first order parameter $z_1$ also plays the role of a ``mean-field parameter'', in the sense that the evolution of $\theta$ (equivalently, $\rho$) factors into the evolution of a collection of oscillators which are coupled only through their interaction with the parameter $z_1$. This mean-field coupling through $z_1$ underlies the conditional convergence we develop in Section \ref{subsec: convergence question} as well as the classification of fixed points in Section \ref{sec: equilibria}. The second order parameter $z_2$ computes the center of mass of the distribution $\rho$ wrapped twice around $\T$. In the context of this article it determines the total contribution of the second harmonic signal in the energy \eqref{order_param_e}. The second order parameter $z_2$ also finds applications in the analysis of higher-order Kuramoto oscillator networks, e.g.  \cite{PS::2019::PhysRevLett}.

\subsection{Limiting Behavior} \label{subsec: limiting behavior}

We now investigate the long-time behavior of trajectories in the constant-$K$ case. We show that both $E(\theta(t))$ and $\nabla E(\theta(t))$ converge, and prove that trajectories are precompact in $\Theta$. This allows us to apply the machinery of LaSalle-Krasovskii invariance to conclude that the set of accumulation points of each trajectory consists of a connected union of fixed points at a single energy level.

First, as a consequence of the EDE, both $E(\theta(t))$ and $\nabla E(\theta(t))$ converge.

\begin{lemma} \label{energy_convergence_lem}
    Let $\theta(t)$ denote any trajectory of the dynamics \eqref{lagrangian_dynamics}. Then the quantity $E(\theta(t))$ converges. Furthermore, the gradient $\nabla E(\theta(t)) \to 0$ in $L^2$. 
\end{lemma}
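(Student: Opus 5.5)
The plan is to combine the Energy Dissipation Equality of Lemma \ref{ede_lem} with the boundedness of $E$ and the Lipschitz regularity of $\nabla E$ from Proposition \ref{regularity_prop}.

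\emph{Convergence of the energy.} Since $K$ is bounded and $|\cos|\le 1$, the energy \eqref{lagrangian_energy} satisfies
\[
|E(\theta)| \le \tfrac12\|K\|_\infty + \tfrac12 K_s
\]
uniformly on $\Theta$, because $\I$ carries normalized Lebesgue measure. By Lemma \ref{ede_lem}, $t\mapsto E(\theta(t))$ is nonincreasing. A monotone function that is bounded below converges, so $E(\theta(t))\to E_\infty$.

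\emph{Integrability of the squared gradient.} Integrating the EDE \eqref{energy_dissipation_equality} over $[0,T]$ gives
\[
\int_0^T \|\nabla E(\theta(t))\|_{L^2}^2\,\mathrm{d}t = E(\theta(0)) - E(\theta(T)) \le E(\theta(0)) - E_\infty .
\]
Hence $g(t):=\|\nabla E(\theta(t))\|_{L^2}^2$ is integrable on $[0,\infty)$. Integrability alone does not force $g(t)\to0$, so I need a Barbalat-type argument: I will show that $g$ is uniformly continuous.

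\emph{Uniform continuity of $g$.} This is the main obstacle, but it is handled by Proposition \ref{regularity_prop}.
\begin{enumerate}[label=\roman*)]
\item Since $\partial_t\theta=-\nabla E(\theta)$ and $\nabla E$ is bounded by some $M$, the trajectory is $M$-Lipschitz in time: $\|\theta(t)-\theta(s)\|_{L^2}\le M|t-s|$. Here I work in the local coordinates in which $\Theta$ is flat, so distances are $L^2$ distances.
\item Since $\nabla E$ is globally Lipschitz with constant $L$, this gives $\|\nabla E(\theta(t))-\nabla E(\theta(s))\|_{L^2}\le LM|t-s|$.
\item Writing $g=\|\cdot\|^2$, the difference $|g(t)-g(s)|$ is at most $2M$ times the gradient difference, so $g$ is Lipschitz with constant $2LM^2$.
\end{enumerate}

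\emph{Conclusion by Barbalat.} Suppose $g(t_n)\ge\varepsilon$ along some sequence $t_n\to\infty$; passing to a subsequence, assume $t_{n+1}-t_n\ge \delta$, so the intervals below are disjoint. By the Lipschitz bound, $g\ge\varepsilon/2$ on each interval $[t_n,t_n+\delta]$ with $\delta=\varepsilon/(4LM^2)$. Each interval then contributes at least $\varepsilon\delta/2$ to $\int_0^\infty g$, which contradicts integrability. Therefore $\nabla E(\theta(t))\to0$ in $L^2$.

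Nothing in this argument uses the constant-$K$ structure; it needs only bounded $K$.
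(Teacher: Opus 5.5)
Your proof is correct and follows essentially the same route as the paper. Monotonicity and boundedness of $E$ give convergence, and the EDE gives integrability of $\|\nabla E(\theta(t))\|_{L^2}^2$. Boundedness and Lipschitz continuity of $\nabla E$ make this function Lipschitz in $t$, and a Barbalat-type argument then forces it to vanish. You simply make explicit the energy bound, the Lipschitz constant $2LM^2$, and the Barbalat step, which the paper only sketches.
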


\begin{proof}
    By Lemma \ref{ede_lem}, the quantity $E(\theta(t))$ is monotone nonincreasing. Since $E$ is also bounded below, $E(\theta(t))$ must converge. Then by the EDE \eqref{energy_dissipation_equality}, $\| \nabla E(\theta(t)) \|_{L^2}^2$ must be integrable in $t$. Moreover, since $\nabla E$ is bounded and Lipschitz (Proposition \ref{regularity_prop}) and $\partial_t \theta = -\nabla E(\theta)$, the map $t \mapsto \| \nabla E(\theta(t)) \|_{L^2}^2$ is Lipschitz. An integrable, uniformly continuous function must vanish in the limit, so $\| \nabla E(\theta(t)) \|_{L^2} \to 0$.
\end{proof}

Second, the total variation bound on $\theta$ given in Section \ref{subsec: constant k case} allows us to prove the precompactness of trajectories via Helly's selection principle.

\begin{lemma} \label{precompactness_lem}
    Each trajectory $\{ \theta(t) \}_{t \in [0,\infty)}$ of the system \eqref{flow_eqn}-\eqref{vector_field_theta_eqn} has compact closure in $\Theta$.
\end{lemma}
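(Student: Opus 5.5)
We take the initial state $\theta_0$ to be monotone, as permitted by the relabeling invariance in the constant-$K$ case (Section \ref{subsec: constant k case}). Precompactness means that every sequence $\theta(t_n)$ has a subsequence converging in $\Theta$, that is, in $L^2$ modulo $2\pi$. The argument combines the uniform total variation bound with Helly's selection principle, and then upgrades pointwise convergence to $L^2$ convergence by dominated convergence.

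\emph{Uniform bound.} By Lemma \ref{monotonicity_lemma}, each $\theta(t)$ is monotone in the sense of Definition \ref{monotonicity_def}. As observed after that lemma, $\theta(t)$ therefore has total variation at most $4\pi$ when read in the fixed coordinate system $[0,2\pi)$, since it can wrap at most once. Its values also lie in $[0,2\pi)$. So the lifted real-valued functions $\tilde\theta(t) := \psi_0\circ\theta(t)$, with $\psi_0:\T\to[0,2\pi)$, form a family bounded in $BV([0,1])$ uniformly in $t$.

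\emph{Extraction.} Given $t_n$, apply Helly's selection principle to $\tilde\theta(t_n)$. This gives a subsequence converging pointwise everywhere on $[0,1]$ to some $\tilde\theta_\infty$ of bounded variation with values in $[0,2\pi]$. The functions are uniformly bounded by $2\pi$ and $[0,1]$ has finite measure, so dominated convergence gives $\tilde\theta(t_{n_k})\to\tilde\theta_\infty$ in $L^2([0,1];\R)$. Now project to $\Theta$ via the quotient map by $2\pi\mathbb{Z}$, which is $1$-Lipschitz because it is a local isometry. Hence $\theta(t_{n_k})\to\theta_\infty := \tilde\theta_\infty \bmod 2\pi$ in $\Theta$. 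A limit value of $2\pi$ simply corresponds to phase $0$, so no information is lost.

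\emph{Main obstacle.} The delicate point is justifying that the total variation bound holds uniformly in the fixed chart $[0,2\pi)$. The monotone coordinate system from Lemma \ref{monotonicity_lemma} moves with $t$, so we cannot simply reuse it. The plan is to verify this directly. If $\psi'\circ\theta$ is nondecreasing with values in $[b,b+2\pi)$, then in the chart $[0,2\pi)$ the function $\theta$ consists of at most two nondecreasing pieces separated by one jump of size less than $2\pi$. Its total variation is therefore at most $2\pi+2\pi+2\pi$, which is bounded independently of $t$. The exact constant does not matter, only its uniformity. An alternative that avoids charts entirely is to apply Helly to $e^{i\theta(t)}$, a $\mathbb{C}$-valued function of arc-length variation at most $2\pi$, and then use that $\theta\mapsto e^{i\theta}$ is a homeomorphism onto its image compatible with the $L^2$ metric on $\Theta$. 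Either route gives that every sequence along the trajectory has a convergent subsequence, so the closure of $\{\theta(t)\}_{t\ge0}$ is compact, as $\Theta$ is complete.
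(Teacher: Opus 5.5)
Your proposal is correct and follows essentially the same route as the paper: assume $\theta_0$ monotone, use Lemma~\ref{monotonicity_lemma} to get a total-variation bound in the fixed chart $[0,2\pi)$ that is uniform in $t$, extract a pointwise convergent subsequence by Helly's selection principle, and upgrade to $L^2$ convergence by dominated convergence. Your explicit chart computation (which gives the cruder but sufficient bound $6\pi$ instead of the paper's $4\pi$) and your remark that the quotient map to $\Theta$ is $1$-Lipschitz only spell out details the paper leaves implicit.
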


\begin{proof}
    Following the discussion in Section \ref{subsec: constant k case}, we may assume $\theta_0$ to be monotone. Fix the coordinates $[0,2\pi)$ on $\T$. Then $\theta(t)$ is uniformly bounded in these coordinates, and as a consequence of Lemma \ref{monotonicity_lemma}, has total variation bounded by $4\pi$. Then by Helly's selection principle \cite[Section 36.5]{AK::1975::Book}, every sequence in $\{ \theta(t) \}_{t \in [0,\infty)}$ admits a pointwise convergent subsequence. Since these functions are uniformly bounded, the dominated convergence theorem upgrades this to convergence in $L^2([0,1];\R)$. Thus every sequence in the trajectory admits an $L^2$-convergent subsequence, so the trajectory has compact closure in $\Theta$.
\end{proof}

It is this precompactness of trajectories which is the key to ensuring the existence of accumulations points, and thus of applying the machinery of LaSalle-Krasovskii invariance.

\begin{lemma} \label{accumulation_lem}
    The set of accumulation points of each trajectory $\theta$ of \eqref{flow_eqn}-\eqref{vector_field_theta_eqn} consists of a nonempty, compact, connected union of fixed points contained within a single energy level of $E$.
\end{lemma}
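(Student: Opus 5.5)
We will follow the standard LaSalle argument for $\omega$-limit sets, adapted to the metric space $\Theta$ with the $L^2$ distance. Let $\omega(\theta_0)$ be the set of limits $\lim_k \theta(t_k)$ over all sequences $t_k \to \infty$.

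\emph{Nonemptiness.} This follows from Lemma \ref{precompactness_lem}: the closure of the trajectory is compact, so any sequence $\theta(k)$, $k \in \mathbb{N}$, has an $L^2$-convergent subsequence.

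\emph{Compactness.} We write $\omega(\theta_0) = \bigcap_{T \geq 0} \overline{\{\theta(t) : t \geq T\}}$. This is a nested intersection of nonempty compact sets, hence nonempty and compact.

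\emph{Connectedness.} We use the usual argument by contradiction. Suppose $\omega(\theta_0) = A \cup B$ with $A, B$ disjoint, nonempty and compact. Then they are at positive distance $\delta$ apart. The curve $t \mapsto \theta(t)$ is continuous, indeed Lipschitz since $\nabla E$ is bounded (Proposition \ref{regularity_prop}). It visits the $\delta/3$-neighborhoods of both $A$ and $B$ at arbitrarily large times. So there are times $s_k \to \infty$ with $\theta(s_k)$ at distance exactly $\delta/3$ from $A$. By precompactness, some subsequence of $\theta(s_k)$ converges to a point of $\omega(\theta_0)$. That point lies in neither $A$ nor $B$, which is a contradiction.

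\emph{Fixed points.} If $\theta(t_k) \to \bar\theta$ in $L^2$, then Lemma \ref{energy_convergence_lem} gives $\nabla E(\theta(t_k)) \to 0$. Since $\nabla E$ is globally Lipschitz (Proposition \ref{regularity_prop}), $\nabla E(\theta(t_k)) \to \nabla E(\bar\theta)$. Hence $\nabla E(\bar\theta) = 0$, and by Lemma \ref{fixed_point_lemma} $\bar\theta$ is a fixed point. This step avoids any need for invariance of $\omega(\theta_0)$ under the flow. Invariance would in any case follow from continuous dependence on initial data, which the Lipschitz property provides.

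\emph{Single energy level.} By Lemma \ref{energy_convergence_lem}, $E(\theta(t)) \to E_\infty$. We need $E$ to be continuous on $\Theta$ in $L^2$. This holds because $E$ is differentiable with bounded gradient, hence Lipschitz, and also directly because $\cos$ is Lipschitz and $K$ is bounded. Then every $\bar\theta = \lim \theta(t_k)$ satisfies $E(\bar\theta) = \lim E(\theta(t_k)) = E_\infty$.

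\emph{Main obstacle.} The only delicate point is working in $\Theta$ rather than $L^2(\I;\R)$. Compactness and the distance arguments should use the quotient metric. The monotone representative used in Lemma \ref{precompactness_lem} may wrap, so limits are taken in $\Theta$, not in fixed coordinates. Since the quotient map is a local isometry and $\Theta$ is complete, the metric-space arguments above go through unchanged. I expect the connectedness step to require the most care in writing, though it is standard.
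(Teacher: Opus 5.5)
Your proposal is correct and follows essentially the paper's argument. The paper simply cites the LaSalle--Krasovskii invariance principle together with Lemma~\ref{energy_convergence_lem}, and you have unpacked that citation into the standard $\omega$-limit arguments: the nested intersection for compactness, the intermediate-distance argument for connectedness, and stationarity of limit points directly from $\nabla E(\theta(t))\to 0$ and the Lipschitz continuity of $\nabla E$, so invariance is indeed never needed. Your closing appeal to the quotient map from $L^2$ to $\Theta$ being a local isometry is unnecessary. It is also not literally true for pointwise identification mod $2\pi$, since $\theta$ and $\theta+2\pi\mathbf{1}_{[0,\varepsilon]}$ lie in every $L^2$-ball around $\theta$ for small $\varepsilon$ yet have the same image. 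Every step you use only requires precompactness of the trajectory and continuity of $t\mapsto\theta(t)$, $E$, and $\nabla E$ with respect to the quotient metric $\|\mathrm{dist}_{\mathbb{T}}(\theta_1,\theta_2)\|_{L^2}$, and all of these hold.
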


\begin{proof}
    By Lemma \ref{precompactness_lem}, the trajectory $\{ \theta(t) \}_{t \in [0,\infty)}$ has compact closure in $\Theta$, and thus a nonempty set of accumulation points. By the LaSalle-Krasovskii invariance principle and Lemma \ref{energy_convergence_lem}, this set must be compact, connected, and contained in the union of complete trajectories contained entirely within the set $\{ E(\theta) = E^* \} \cap \{ \nabla E (\theta) = 0 \}$, which is exactly the set of fixed points contained within the energy level $E^* := \lim_{t \to \infty} E(\theta(t))$.
\end{proof}

\subsection{The Convergence Question} \label{subsec: convergence question}

While Lemmas \ref{fixed_point_lemma} and \ref{stability_lemma} fully characterize the set of fixed points and their stability, and Lemma \ref{accumulation_lem} shows that the set of accumulation points of each trajectory consists of a connected union of fixed points, these results do not necessarily imply that either (1) all trajectories converge to individual fixed points, or (2) that ``most'' trajectories converge to asymptotically stable fixed points. For practical purposes, we are rescued (as we explain below), but in the infinite-dimensional limit, the question is much more subtle. We provide in this setting a conditional convergence result based on the order-parameter reduction \eqref{order_param_v}, but ultimately, the questions (1) and (2) remain open regarding the infinite-dimensional limit.

There are two standard routes to proving (1): either (a) show that fixed points in the accumulation set of any trajectory must be isolated, or (b) show that $E$ satisfies a \L{}ojasiewicz inequality. If fixed points in the accumulation set are isolated (route (a)), then since the accumulation set is connected, it must consist of a single fixed point. Since trajectories are precompact, this is equivalent to the convergence of the trajectory to said fixed point. On the other hand, if $E$ satisfies a \L{}ojasiewicz inequality (route (b)), then trajectories must have finite length, and thus must converge. The main route to proving (2) is stable manifold theory: show that the set of stable manifolds of unstable fixed points in the accumulation set must have positive codimension.

For practical purposes, we are rescued by Observation \ref{finite_dimenional_observation}: the mean-field models are proxies for large but finite systems, and in finite dimensions both (1) and (2) hold by standard arguments. The finite-dimensional energy \eqref{continuous_surrogate} is real-analytic, so it satisfies a \L{}ojasiewicz inequality; this implies (1) by route (b). For (2), the unstable fixed points have stable manifolds of positive codimension \cite[Theorem~3.2.1]{SW::2003::Springer}, and since $E$ is analytic, there are only finitely many connected components of fixed points to account for. The union of these stable manifolds therefore has Lebesgue measure zero, so Lebesgue-almost every trajectory converges to a local minimum. This fully explains the behavior observed in practice.

In the infinite-dimensional limit, both routes (a) and (b) fail. Route (a) fails since fixed points in the accumulation set are not isolated in general: an implicit function theorem-type argument following the parameterization of fixed points given in Section \ref{sec: equilibria} suggests that such sets are two-dimensional in general. Route (b) fails for precisely the same reason that we had to be careful about directional local minima in Section \ref{subsec: fixed points and stability}: every neighborhood of a fixed point contains other fixed points at different energy levels, so $E$ is not locally constant on the set of fixed points, which precludes any \L{}ojasiewicz inequality. There is a potential nonstandard route (c) to proving (1) via the order-parameter reduction, which we outline below. The route to proving (2) may also still be viable, but stable manifold theory is delicate and technical in infinite dimensions, and this sort of development is outside the scope of this work.

We now describe the potential route (c) to proving (1). The idea is to use the order-parameter reduction \eqref{order_param_v}: since the evolution of each oscillator is coupled only through the order parameter $z_1$, by obtaining control on $z_1$, we can obtain control on the entire state $\theta$. In fact, $\theta$ converges if and only if $z_1$ converges.

\begin{lemma}
    Let $\theta(t)$ be any trajectory of the system \eqref{flow_eqn}-\eqref{vector_field_theta_eqn} and $z_1(t) = z_1(\theta(t))$ be the first order parameter of $\theta(t)$. Then $z_1(t)$ converges in $\mathbb{C}$ if and only if $\theta(t)$ converges in $\Theta$. Furthermore, in this case, it holds that $z_1^* = z_1(\theta^*)$, where $z_1^*$ and $\theta^*$ denote the limits of $z_1(t)$ and $\theta(t)$, respectively.
\end{lemma}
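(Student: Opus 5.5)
The direction ``$\theta(t)$ converges $\Rightarrow$ $z_1(t)$ converges'' is the easy one. I will use the Lipschitz continuity of the map $\theta \mapsto z_1(\theta)$ from $\Theta$ to $\mathbb{C}$, which follows from $|e^{ia}-e^{ib}| \le |a-b|$ (with the difference taken modulo $2\pi$) and Cauchy--Schwarz:
\[
|z_1(\theta)-z_1(\phi)| \le \|\theta-\phi\|_{L^1} \le \|\theta-\phi\|_{L^2}.
\]
Hence $\theta(t)\to\theta^*$ in $\Theta$ gives $z_1(t)\to z_1(\theta^*)$. This simultaneously identifies the limit, $z_1^*=z_1(\theta^*)$, once we know both limits exist.

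For the converse, assume $z_1(t)\to z_1^*$. By the order-parameter form \eqref{order_param_v}, each oscillator solves the scalar non-autonomous ODE $\dot\vartheta = v_{z_1(t)}(\vartheta)$ on $\T$. Its vector field converges uniformly, together with its $\vartheta$-derivative, to the autonomous field $v_{z_1^*}(\vartheta) = -K\,\mathrm{Im}(z_1^* e^{-i\vartheta}) - K_s\sin(2\vartheta)$. This limit field is a trigonometric polynomial of degree at most $2$ and not identically zero (since $K_s>0$), so it has at most four zeros on $\T$.

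The plan is to show that every solution of an asymptotically autonomous scalar ODE on the circle converges to a zero of the limit field. One could invoke Markus' theorem on asymptotically autonomous systems; alternatively, argue directly. Away from the zeros, $|v_{z_1^*}|\ge\delta$, so for large $t$ the perturbed field has a fixed sign there. A trajectory therefore cannot linger in an arc between consecutive zeros and cannot cross a zero infinitely often in the same direction, because $\vartheta(t)$ is eventually monotone between neighborhoods of zeros. Thus its $\omega$-limit set, which is connected, lies in an arbitrarily small neighborhood of a single zero, and $\vartheta(t)$ converges.

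This yields pointwise convergence $\theta(t)(x)\to\theta^*(x)$ for every $x$. Since all values lie in a bounded coordinate range, dominated convergence upgrades this to convergence in $L^2$, exactly as in Lemma \ref{precompactness_lem}. Measurability of $\theta^*$ follows as a pointwise limit of measurable functions. Finally, apply the first direction to get $z_1^*=z_1(\theta^*)$.

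The main obstacle is the scalar convergence step near a zero. The perturbation $v_{z_1(t)}-v_{z_1^*}$ may keep a trajectory oscillating across a degenerate (double) zero of $v_{z_1^*}$, or drifting through a small neighborhood of it. I would handle this by noting that crossing the small neighborhood of a zero in either direction is still consistent with convergence to that zero, since the neighborhood shrinks as $t\to\infty$. The genuine risk is only a trajectory leaving one zero's neighborhood and reaching another infinitely often. This is excluded because, on the intervening arc, the field eventually has a fixed sign: passage is then one-way, and it cannot repeat around the whole circle infinitely often unless $v_{z_1^*}$ has no zeros, which degree/sign considerations rule out since $\int_\T v_{z_1^*}=0$.
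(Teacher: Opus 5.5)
Your proof is correct and has the same architecture as the paper's. Continuity of $\theta\mapsto z_1(\theta)$ gives the easy direction and the identification $z_1^*=z_1(\theta^*)$; you make this quantitative via $|z_1(\theta)-z_1(\phi)|\le\|\theta-\phi\|_{L^2}$. The hard direction reduces to convergence of each oscillator under the asymptotically autonomous field $v_{z_1(t)}$, followed by dominated convergence.

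The two proofs differ in the scalar convergence step. The paper uses the zero-set count of Section~\ref{sec: equilibria} to assert that the zeros of $v_{z_1^*}$ alternate as sinks and sources (with at most one semi-stable zero). From this it concludes there is no cyclic chain of equilibria and cites \cite[Corollary~4.3]{HT::1992::JMathBiol}. You instead give a self-contained trapping argument:
\begin{itemize}
\item off small neighborhoods of the zeros, the perturbed field eventually has the sign of $v_{z_1^*}$;
\item so each complementary arc is crossed in bounded time and only in one direction;
\item because $v_{z_1^*}$ changes sign, no full loop around $\mathbb{T}$ is possible;
\item hence there are finitely many crossings, and the trajectory is eventually confined to an arbitrarily small neighborhood of a single zero.
\end{itemize}
This needs only a finite zero set plus a sign change. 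Your observation that $\int_{\mathbb{T}} v_{z_1^*}=0$ with $v_{z_1^*}\not\equiv 0$ is exactly what justifies the paper's no-cyclic-chain claim; continuity alone, which the paper appeals to, would not exclude a one-signed field with double zeros.

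Two wording corrections. First, what forbids infinitely many loops is this sign change, not merely the existence of zeros. A field $g\ge 0$ with a double zero can be circled infinitely often under a slowly decaying positive perturbation. Second, the invariance and connectedness of the $\omega$-limit set supplied by Markus' theorem do not by themselves exclude a closed arc joining two adjacent zeros. So it is your direct argument (or Thieme's refinement) that actually carries this step, not Markus' theorem.
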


\begin{proof}
    To show the forward direction, suppose that $z_1(t)$ converges to $z_1^*$. Then $v_{z_1(t)}$ converges to $v_{z_1^*}$ uniformly. Then the trajectory $\theta_x(t)$ of each oscillator $x$ evolves according to an \emph{asymptotically autonomous} flow. The limit vector field $v_{z_1^*}$ has either 2, 3, or 4 isolated zeros (as shown later in Section \ref{sec: equilibria}), and since $v_{z_1^*}$ is continuous, these alternate around $\T$ as either sink-source, sink-(semi-stable)-source, or sink-source-sink-source. In any case, there can exist no cyclical chains of equilibria, so the hypotheses of \cite[Corollary~4.3]{HT::1992::JMathBiol} hold, and we may conclude that the trajectory of every oscillator $\theta_x(t)$ converges to a zero of $v_{z_1^*}$. Therefore, $\theta$ converges pointwise, so by the dominated convergence theorem, it converges in $L^2$, and thus in $\Theta$. By continuity of the map $\theta \mapsto z_1(\theta)$, we must have $z_1^* = z_1(\theta^*)$. The reverse direction also follows immediately by continuity of the map $\theta \mapsto z_1(\theta)$.
\end{proof}

Note that by Lemma \ref{accumulation_lem}, if $\theta(t) \to \theta^*$, then $\theta^*$ is necessarily a fixed point of the dynamics \eqref{lagrangian_dynamics}. Route (c) thus proceeds by showing that $z_1(t)$ converges -- potentially easier than showing that $\theta(t)$ converges, since $z_1$ is a finite-dimensional object. We note, however, that while the EDE yields $\dot{z}_1 \in L^2$ (and thus $\dot{z}_1 \to 0$), we need $\dot{z}_1 \in L^1$ to deduce the convergence of $z_1$. Thus, the question ultimately remains open.

\subsection{Gradient Flows in the Eulerian Setting} \label{subsec: wasserstein gradient flows}

Having discussed the gradient flow structure of the Lagrangian model at length, we now discuss the gradient flow structure of the Eulerian model \eqref{eulerian_energy}-\eqref{eulerian_dynamics} and its relation with material presented in Sections \ref{sec: problem description} and \ref{sec: gradient flow structure}. The Eulerian model is a gradient flow with respect to the 2-Wasserstein metric on the space of phase densities -- a fact that emerges naturally from the structure of the space of phase densities as a quotient space. Although an elegant approach, the Eulerian structure presents certain difficulties in our setting, particularly when it comes to the stability analysis.

Recall from Sections \ref{sec: problem description} and \ref{subsec: constant k case} that in the constant-$K$ case, the invariance under relabeling renders the index set extraneous. In Section \ref{subsec: constant k case}, we used this fact to choose $\I$ (and thus $\theta_0$) to be as nice as possible. An alternative approach is to work in the Eulerian setting, which discards $\I$ entirely.

Recall that the phase density $\rho$ of the Eulerian representation is related to the phase function $\theta$ of the Lagrangian representation by the transformation $\mathcal{T}: \theta \mapsto \rho = \theta_\# \leb$, and that this transformation is many-to-one. In fact, any two phase functions which are related by a measure-preserving transformation of $\I$ push forward to the same phase density $\rho$. This identifies each phase density $\rho$ with a whole equivalence class of phase functions $[\theta]$ which are equivalent modulo measure-preserving transformations. Choosing $\I$ so that $\theta_0$ is monotone -- as we did in Section \ref{subsec: constant k case} -- is simply one way to choose a unique representative from each equivalence class $[\theta]$.

Finally, recall from Section \ref{sec: problem description} that the Lagrangian energy function and dynamics push forward to well-defined functions of $\rho$ if and only if they are invariant under relabeling, that is, if and only if they are constant on each equivalence class. On one hand, if \emph{both} the energy function and dynamics push forward to well-defined functions of $\rho$, then it is natural to expect that the Eulerian representation \eqref{eulerian_energy}-\eqref{eulerian_dynamics} retains a gradient flow structure as well. On the other hand, we stated earlier that gradient flows must always be defined with respect to an underlying metric. It seems natural then that the correct metric in the Eulerian setting would be the quotient metric on the set of equivalence classes of phase functions induced by the $L^2$ Riemannian metric on $\Theta$. This is indeed the case, and this quotient metric turns out to be exactly the 2-Wasserstein metric on the space of probability densities over the torus.

This is no coincidence, and the quotient metric structure of the Wasserstein distance is perhaps the clearest way to understand its foundational role in so many mathematical and physical systems: starting from a space $\Omega$ with Riemannian distance $d$, consider the space of maps $\mathcal{M}$ from a set $\I \to \Omega$. The natural metric on $\mathcal{M}$ is the $L^2$ distance induced by $d$. Then, form the quotient of $\mathcal{M}$ modulo relabelings of $\I$. The natural metric on this quotient is the 2-Wasserstein distance. The naturality of this construction is striking: in any collective system where labels are extraneous, such a metric is bound to appear.

Otto \cite{FO::2001::CommPDE} (building on the work of Brenier \cite{YB::1991::CPAM}) was the first to recognize this quotient metric structure in the Wasserstein distance. Working formally in the category of infinite-dimensional Riemannian manifolds, he showed that the transformation $\mathcal{T}: \theta \mapsto \rho = \theta_\# \leb$ is \emph{almost} a Riemannian submersion. It has the correct formal structure, but the target space (i.e., the Wasserstein space of probability measures over $\Omega$) is not quite a Riemannian manifold, since it has a ``boundary'' -- the measures which are not absolutely continuous -- where the induced metric degenerates.

The degeneracy at the boundary presents challenges for both the rigorous definition of gradient flow in this setting and the stability analysis, since the gradient and Hessian can no longer be defined via the identities $\langle \nabla f(x),v \rangle_x = Df(x)[v]$ or $\langle u , \nabla^2 f(x) \, v\,  \rangle_x = D^2f(x)[u,v]$ at the boundary. For the most part, there are good tools for dealing with this. Option (a) is to work on the interior of the Wasserstein space (i.e., bounded densities supported everywhere), where the formal Riemannian structure can usually be trusted. Option (b) is to treat the dynamics as a gradient flow over a metric space, where the definition of gradient and gradient flow must be appropriately extended. The famous book by Ambrosio, Gigli, and Savaré \cite{LA::2005::GradFlows} puts both of these approaches on a rigorous footing.

Thus, almost all of the results established above for the Lagrangian model have parallel results for the Eulerian model. The dynamics are well-posed, we obtain an energy dissipation equality, fixed points and local minima/stability are treated roughly the same, precompactness follows from Prokhorov (rather than Helly), and the limiting behavior is essentially identical.

The main point of deviation is the stability analysis. Option (a) fails here, since the fixed points that we care about are always at the boundary. (Indeed, we show in Section \ref{sec: equilibria} that fixed points are always atomic.) Working with the formal Riemannian structure at these points gives a \emph{genuinely} incorrect answer for the stability of fixed points. Option (b) fails us as well, though more subtly: the metric-space theory delivers nice theoretical tools, but it provides no second-order stability calculus at boundary points, i.e., there is no analogue of the Lagrangian Hessian with which to actually compute stability. We develop both of these points more fully in Section \ref{sec: stability}; for now, we simply remark that this is the primary reason we carry out most of the development in the Lagrangian setting.

\section{Analysis of Equilibria} \label{sec: equilibria}

So far we have looked at the structure of the gradient flow system; now we look at its equilibrium points. It is easier to carry out the fixed point analysis in the Eulerian setting, so we first provide an analogue of Lemma~\ref{fixed_point_lemma} regarding fixed point conditions for $\rho$.

\begin{corollary}
    A point $\rho \in P(\mathbb{T})$ is a fixed point of the system \eqref{eulerian_dynamics} if and only if $\rho$ is supported on the zero set of the velocity field
    \begin{equation} \label{v_rho_eqn}
        v_\rho(\bullet) = K \int_\T \sin(\bullet - \theta )\, \rho(\theta) \, \mathrm{d}\theta - K_s \sin(2 \bullet) .
    \end{equation}
\end{corollary}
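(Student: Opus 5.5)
The plan is to reduce the statement to Lemma~\ref{fixed_point_lemma} through the Lagrangian–Eulerian correspondence $\rho = \theta_\# \leb$, using the pointwise factorization \eqref{flow_eqn}--\eqref{vector_field_rho_eqn}. Fix $\rho \in P(\T)$ and choose a phase function $\theta:\I \to \T$ with $\theta_\# \leb = \rho$. Such a $\theta$ always exists, for instance the quantile function of $\rho$ in the coordinates $[0,2\pi)$. By the change-of-variables formula \eqref{change_of_variables_eq}, $v_\theta = v_\rho$. The gradient \eqref{lagrangian_gradient_eq} with $K(x,y)\equiv K$ therefore reads $\nabla E(\theta)(x) = -v_\rho(\theta(x))$.

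\textbf{Direction ($\Leftarrow$).} Suppose $\rho$ is supported on $Z := \{ v_\rho = 0\}$. Since $v_\rho$ is continuous, $Z$ is closed, so $\rho(\T \setminus Z) = 0$. By the pushforward identity, $\leb(\{x : v_\rho(\theta(x)) \neq 0\}) = \rho(\T\setminus Z) = 0$. Hence $\nabla E(\theta) = 0$ in $L^2$, and $\theta$ is a fixed point by Lemma~\ref{fixed_point_lemma}. The Lagrangian solution $\theta(t)\equiv\theta$ pushes forward to the constant curve $\rho(t)\equiv\rho$. Alternatively, and more directly, the weak form of \eqref{eulerian_dynamics} tested against $f \in C^\infty(\T)$ gives $\frac{d}{dt}\int f\rho = \int f'(\theta) v_\rho(\theta)\rho(\theta)\,\mathrm{d}\theta = 0$, because the integrand vanishes $\rho$-a.e. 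So $\rho$ is stationary.

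\textbf{Direction ($\Rightarrow$).} Suppose $\rho$ is stationary for \eqref{eulerian_dynamics}. I would argue via the characteristics: $\rho(t) = (\Phi_t)_\#\rho$, where $\Phi_t$ is the flow of the vector field $v_{\rho(t)}$. Stationarity gives $v_{\rho(t)} \equiv v_\rho$, which is an autonomous smooth vector field on $\T$, so $\Phi_t$ is its ordinary flow and $\rho$ must be invariant under that flow. Now take a point $\theta_0$ with $v_\rho(\theta_0)\neq 0$, and let $J$ be the open arc (the maximal interval of nonvanishing) containing it. Orbits in $J$ move monotonically toward one endpoint. I would choose a compact subarc $[a,b]\subset J$ and note that $\Phi_t([a,b])$ eventually becomes disjoint from $[a,b]$, but on a circle this needs care. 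The concrete tool is a compactly supported test function $f$ on $J$ with $f' > 0$ on $[a,b]$: since $v_\rho$ has a sign on $J$, we get $0=\int f' v_\rho\,\rho$, which forces $\rho([a,b])=0$ once $f'\ge 0$ on $J$. Such an $f$ can be taken as the antiderivative of a nonnegative bump $g$ supported in $J$, provided $\int_J g = 0$. That last condition is incompatible with $g\ge0$ unless $J=\T$, so I would instead use a Lagrangian monotonicity argument.

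\textbf{Main obstacle and how to handle it.} The cleanest route is Lagrangian. By uniqueness (well-posedness), the monotone representative $\theta_0$ of $\rho$ evolves with $(\theta(t))_\#\leb = \rho$ for all $t$. Its order parameter $z_1$ is then constant, so each oscillator follows the autonomous flow of $v_\rho$. The set $\{x : v_\rho(\theta_0(x)) \ne 0\}$ consists of oscillators whose phases strictly move within arcs of nonvanishing velocity. Take $F(t) = \leb(\{x : \theta_x(t) \in J_{\le c}\})$, the mass in the part of a nonvanishing arc $J$ "behind" a level $c$. Under the flow this quantity is strictly decreasing whenever $\rho(J)>0$, because mass leaves through $c$ and none enters from the stationary endpoint. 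That contradicts $\rho(t)=\rho$. Hence $\rho(J)=0$ for every such arc, so $\rho$ is supported on $Z$. The step I expect to require the most care is this flux/monotonicity argument: it must rule out a stationary positive-mass density on an arc where $v_\rho \neq 0$, including the possibility that $J$ is the whole circle minus nothing. In that case $v_\rho$ has no zeros, and I would instead observe that $\int (1/v_\rho)\,\mathrm{d}\theta$ gives a rotation with constant period. However, $v_\rho$ has mean zero over $\T$ (both of its terms are pure harmonics), so it must vanish somewhere, and this case is excluded.
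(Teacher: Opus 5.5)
Your argument is correct, and its core matches the paper's. The paper's whole proof is the remark that, for a representative $\theta$ with $\theta_\#\leb=\rho$, $\theta$ takes values a.e.\ in a closed set $A$ if and only if $\mathrm{supp}\,\rho\subset A$. It uses this implicitly together with Lemma~\ref{fixed_point_lemma} and $\nabla E(\theta)=-v_\rho\circ\theta$, which is exactly your ($\Leftarrow$) direction.

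The difference is in ($\Rightarrow$). The paper tacitly identifies fixed points of \eqref{eulerian_dynamics} with pushforwards of Lagrangian fixed points. You correctly notice that a stationary $\rho$ could a priori be the image of a non-stationary phase function, with mass carried along an arc where $v_\rho\neq 0$. You exclude this using three ingredients: invariance of $\rho$ under the flow of the frozen field $v_\rho$, a flux argument on each arc of nonvanishing velocity, and the fact that $v_\rho$ has mean zero and so must vanish somewhere. Your version is therefore more complete than the paper's, at the price of a uniqueness input. Uniqueness for the linear continuity equation with the fixed smooth field $v_\rho$ already suffices; you do not need well-posedness of the nonlinear equation.

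Two small repairs. First, in the flux step, pick $c$ with $\rho((\alpha,c])>0$ and use $F(t)=\rho\big((\alpha,\Phi_{-t}(c)]\big)\to 0$. Do not claim ``strictly decreasing'', which can fail on time intervals where the swept region carries no mass.

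Second, the test-function route you abandoned does close, with no well-posedness at all. Stationarity gives $\int h\,v_\rho\,\mathrm{d}\rho=0$ for every smooth mean-zero $h$, since these are exactly the derivatives $f'$ of smooth $f$ on the circle. Hence $v_\rho\rho=c\,\mathrm{d}\theta$ for some constant $c$. Since $v_\rho$ is continuous, mean-zero and not identically zero, it is strictly negative on some open arc and strictly positive on another. That forces $c=0$, and then $\rho(\{v_\rho\neq 0\})=0$.
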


\begin{proof}
    By the pushforward relation $\rho = \theta_\# \leb$, $\theta$ is valued in the set $A \subset \T$ for (almost) all $x$ if and only if $\rho$ has support contained in $A$.
\end{proof}

In the above, $P(\T)$ denotes the space of probability densities on $\T$, i.e., the state space in the Eulerian representation. In this section, we will show that the velocity field \eqref{v_rho_eqn} must have either two, three, or four zeros, and thus at each fixed point, $\rho$ must consist of a discrete density (i.e. a sum of Dirac masses) with at most four components. This will make it possible to enumerate all of the equilibrium configurations for the system.

Starting from the order parameter representation of the velocity field \eqref{order_param_v} and writing the first order parameter in polar form as $z_1 = |z_1|e^{i\angle z_1}$, we may equivalently express $v$ in terms of the magnitude and phase of $z_1$ as
\begin{equation} \label{v_z_phase_eqn}
    v_{z_1}(\theta) ~=~ - K|z_1|\sin(\angle z_1 - \theta) - K_s \sin(2\theta) .
\end{equation}
This reparameterization of $v$ enables us to draw the following important conclusions:
\begin{enumerate}
    \item $v$ depends on $\rho$ only through its dependence on $z_1 = z_1(\rho)$. In particular, $v$ can be finitely parameterized in terms of at most four real variables: $|z_1|$, $\angle z_1$, $K$, and $K_s$.
    \item $v$ is real analytic. In particular, for any given parameters $(z_1, K, K_s)$, $v_{z_1}$ has a finite number of isolated zeros. (Note that $v$ cannot be identically equal to 0 since we suppose that $K_s$ is strictly positive.)
\end{enumerate}

\noindent We may therefore classify all possible $v$ (thus their zero sets, thus all possible equilibrium configurations) in terms of these parameters. Observing that since the zero set of any velocity field $v$ is invariant under any (nonzero) rescaling of $v$, we may reduce the study of the zero set of \eqref{v_z_phase_eqn} to the study of the zero set of the scaled velocity field
\begin{equation}
    \tilde{v}_{z_1}(\theta) ~:=~ - \frac{K}{K_s}|z_1|\sin(\angle z_1 - \theta) - \sin(2\theta) ~=~ - A \sin(\angle z_1 - \theta) - \sin(2\theta) , \label{eqn: velocity field normalized}
\end{equation}
where $A := K |z_1| / K_s$. We therefore only need to search over the two-parameter space $(A,\angle z_1)$ to determine all possible zero sets. (Note, however, that since $|z_1| \leq 1$, not every zero set configuration is feasible for a given choice of $K,K_s$.) This form for $v$ then allows us to conclude the following.

\begin{lemma} \label{zero_set_lem}
    There exist exactly two, three, or four zeros of the velocity field \eqref{eqn: velocity field normalized}. 
\end{lemma}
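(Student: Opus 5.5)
We will count the zeros of $f(\theta) := A\sin(\alpha - \theta) + \sin(2\theta)$, where $\alpha := \angle z_1$; this is $-\tilde v_{z_1}$. The plan is to bound the count above by four with a polynomial argument, and below by two with a topological argument.

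\textbf{Upper bound.} Set $w = e^{i\theta}$ and multiply $2i f$ by $w^2$. Since
\[
2i\sin(\alpha-\theta) = e^{i\alpha}w^{-1} - e^{-i\alpha}w, \qquad 2i\sin 2\theta = w^2 - w^{-2},
\]
we get
\[
2i\,w^2 f = w^4 - A e^{-i\alpha} w^3 + A e^{i\alpha} w - 1 .
\]
This is a nonzero polynomial of degree exactly four in $w$. Hence $f$ has at most four zeros on $\T$, counted with multiplicity.

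\textbf{Lower bound.} First, $f$ is not identically zero, since its $\sin 2\theta$ Fourier mode has coefficient $1$. Second, $\int_\T f\,\mathrm{d}\theta = 0$, because $f$ is a trigonometric polynomial with no constant term. Therefore $f$ takes both strictly positive and strictly negative values. By continuity on the circle, $f$ must change sign at least twice going around $\T$, so it has at least two distinct zeros.

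\textbf{Parity and the three-zero case.} The count of distinct zeros is thus at most four and at least two, and every value $2,3,4$ can occur. Counted with multiplicity, the number of zeros on a circle of a smooth function with finitely many zeros need not be even. However, sign changes do come in an even number. Four distinct simple zeros give four sign changes. Three distinct zeros are possible when one zero is a double root, i.e.\ a non-sign-changing tangency, which is the semi-stable case mentioned earlier in the paper. Two distinct zeros arise otherwise.

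\textbf{Main obstacle.} The key step is getting the upper bound of four for \emph{distinct} zeros, and the polynomial reduction settles it immediately. The word ``exactly'' should then be read as ``the number is one of $\{2,3,4\}$.'' If realizability of each case is also wanted, we will exhibit examples: $A=0$ gives four zeros, and large $A$ gives two, since the first harmonic dominates. A value of $A$ at which a double root appears, located by a discriminant or tangency computation, gives three.
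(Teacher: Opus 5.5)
Your argument is correct, but you reach both bounds by a different route than the paper.

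\textbf{The paper's route.} It rewrites $\tilde v_{z_1}(\theta)=a\cos\theta+b\sin\theta-2\sin\theta\cos\theta$ and identifies the zeros with the intersections of the rectangular hyperbola $H_1=\{ax+by-2xy=0\}$ and the unit circle $C_1$. B\'ezout, after checking that $C_1$ is irreducible and not contained in $H_1$, gives at most four. The fact that $H_1$ passes through the origin, which lies inside the disk, gives at least two.

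\textbf{Your route.} You obtain a monic quartic in $w=e^{i\theta}$, so the upper bound needs no irreducibility check and counts multiplicity automatically. Your lower bound ($\int_{\mathbb{T}} f\,\mathrm{d}\theta=0$, $f\not\equiv 0$, intermediate value theorem on the two arcs) is complete as written. The paper's ``passes through the origin, therefore two intersections'' tacitly uses that the branch (or line, when $ab=0$) through the origin is connected and unbounded in both directions. Since $A$ is real, your polynomial also satisfies $w^4\overline{P(1/\bar w)}=-P(w)$, so roots off $\mathbb{T}$ pair up as $w\leftrightarrow 1/\bar w$. This makes precise your claim that three distinct zeros means exactly one double zero.

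\textbf{What the conic picture buys.} The paper reuses this geometry immediately afterwards: the convex hull of the intersection points in Lemma~\ref{lem: density as sum of dirac masses}, the tangent line $L_1$ containing $z_1$, and the claim in Appendix~\ref{necessary_stability_appendix} that two stable zeros lie in opposite half-circles.

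\textbf{One correction.} In your parity paragraph (not needed for the lemma), the first sentence is wrong. For a trigonometric polynomial, whose zeros all have finite order, the count \emph{with} multiplicity is always even, because odd-order zeros are exactly the sign changes. It is the number of \emph{distinct} zeros that can be odd.
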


\begin{proof}
    First, rewrite \eqref{eqn: velocity field normalized} as
    \begin{align}
        \tilde{v}_{z_1}(\theta) &= -A\sin(\angle z_1 - \theta) - \sin(2\theta)\\
        &= -A\sin(\angle z_1)\cos{(\theta)} + A\sin{(\theta)}\cos{(\angle z_1)} - 2\sin{(\theta)}\cos{(\theta)} .
        \label{eqn: simplified velocity field}
    \end{align}
    Define $a := -A\sin{(\angle z_1)}, ~b := A\cos{(\angle z_1)}$ to obtain
    \begin{equation}
        \tilde{v}_{z_1}(\theta) = a\cos{\theta} + b\sin{\theta} - 2\sin{\theta}\cos{\theta} .
    \end{equation}
    Additionally, define $x := \cos{(\theta)}$ and $y:=\sin(\theta)$, and observe that the zeros of \eqref{eqn: simplified velocity field} are then precisely the points of intersection of the two conics
    \begin{align}
        H_{1} &:= \{ ax + by - 2xy = 0 \} \\
        C_{1} &:= \{ x^{2}+y^{2}-1 = 0 \} ,
    \end{align}
    a rectangular hyperbola and circle, respectively. By Bézout's theorem -- noting that $H_1$ and $C_1$ share no common component, since $C_1$ is irreducible and is not contained in $H_1$ -- we can conclude that the two conics can intersect in at most four points. Additionally, we note that the hyperbola $H_{1}$ always passes through the origin, and therefore, for all $a,b$, the curves $C_{1}$ and $H_{1}$ intersect in at least two points. Therefore, there exist exactly two, three, or four zeros of the velocity field \eqref{eqn: simplified velocity field}.
\end{proof}

In addition to the equilibrium condition that $\rho$ be supported on the zero set of $v_{z_1}$ (written formally as $v_{z_1} \rho = 0$), there is an additional \emph{compatibility condition} that $z_1$ actually be the order parameter corresponding to the given $\rho$. The following lemma makes this correspondence precise.

\begin{lemma}\label{lem: density as sum of dirac masses}
    Given $z_1$, $K$, $K_s$, consider the velocity field $v = v_{z_1}$ as defined in \eqref{v_z_phase_eqn}. Then:
    \begin{enumerate}
        \item There exists a $\rho$ such that $v_{z_1} \rho = 0$ and $z_1 = z_1(\rho)$ if and only if $z_1$ lies in the convex hull of the zero set of $v_{z_1}$.
        \item Supposing (1) holds, then every such $\rho$ takes the form $\rho = \sum_i m_i \delta_{\zeta_i}$ for some convex coefficients $\{m_i\}$ of $z_1$: $z_1 = \sum_i m_i \zeta_i$, $\sum_i m_i = 1$, $m_i \geq 0$, and where $\{\zeta_i\}$ denotes the zero set of $v_{z_1}$.
    \end{enumerate}
\end{lemma}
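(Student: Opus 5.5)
The plan is to reduce everything to the finite zero set of $v_{z_1}$ given by Lemma~\ref{zero_set_lem}, and then read off the order parameter as a convex combination. Fix $z_1$, $K$, $K_s$, and let $Z = \{\zeta_1,\dots,\zeta_n\}\subset\T$, with $n\in\{2,3,4\}$, denote the zero set of $v_{z_1}$. This set is finite because the zeros of $v_{z_1}$ coincide with those of the rescaled field \eqref{eqn: velocity field normalized}, which has two, three or four zeros by Lemma~\ref{zero_set_lem}. Throughout, identify $\T$ with the unit circle in $\mathbb{C}$, so each $\zeta_i$ is also the point $e^{i\zeta_i}$, and convex hulls are taken in $\mathbb{C}$.

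I would prove part (2) first. The condition $v_{z_1}\rho = 0$ means that $\rho$ is supported on $Z$; the precise sense is that of the fixed-point Corollary, namely $\rho(\T\setminus Z)=0$. A probability measure supported on a finite set is necessarily atomic, so $\rho=\sum_i m_i\delta_{\zeta_i}$ with $m_i=\rho(\{\zeta_i\})\ge 0$ and $\sum_i m_i=1$. Substituting this into \eqref{order_parameters_rho_eqn} gives $z_1(\rho)=\sum_i m_i e^{i\zeta_i}$. The compatibility requirement $z_1=z_1(\rho)$ then says exactly that the $m_i$ are convex coefficients representing $z_1$ in terms of the zeros, which is the form claimed in (2).

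Part (1) follows from the same computation in both directions. For the forward direction, any admissible $\rho$ has the form above, so $z_1=\sum_i m_i\zeta_i$ with convex weights, and therefore $z_1\in\mathrm{conv}(Z)$. For the reverse direction, if $z_1\in\mathrm{conv}(Z)$, then by the definition of the convex hull of a finite set there are weights $m_i\ge0$ with $\sum_i m_i=1$ and $\sum_i m_i\zeta_i=z_1$. Setting $\rho:=\sum_i m_i\delta_{\zeta_i}$ gives a probability measure supported on $Z$, so $v_{z_1}\rho=0$, and by the computation above $z_1(\rho)=z_1$.

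The argument is essentially bookkeeping. The one point I would handle carefully is that $z_1$ plays two roles: it is the parameter that fixes the field $v_{z_1}$, and hence the set $Z$, in advance, and it is also the output $z_1(\rho)$. Once $Z$ is fixed this way, the problem is linear in $\rho$, so no fixed-point argument is needed. I would also note that the weights $m_i$ need not be unique when $n\ge3$, which is why (2) says ``some'' convex coefficients.
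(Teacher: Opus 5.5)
Your proof is correct and is essentially the paper's argument; the only cosmetic difference is that you obtain the forward direction of (1) from the atomic decomposition in (2), whereas the paper invokes the fact that a center of mass lies in the convex hull of the support. One small correction to your closing aside: three distinct points on the unit circle are never collinear, so the weights are unique when $n=3$, and non-uniqueness can occur only when $n=4$.
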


\begin{proof}
    To show the forward direction in (1), suppose that there exists a $\rho$ satisfying $v_{z_1} \rho = 0$ and $z_1 = z_1(\rho)$. Then by the interpretation of the order parameter $z_1$ as the complex center of mass of $\rho$ (given in Section \ref{subsec: constant k case}), $z_1$ must be contained in the convex hull of the support of $\rho$, which in turn must be contained in the convex hull of the zero set of $v_{z_1}$ (since $\rho$ must be supported in this set).

    To show the reverse direction in (1), suppose that $z_1$ belongs to the convex hull of the zero set of $v_{z_1}$, which we denote $\{ \zeta_i \}$. Then there exist convex coefficients $m_i$ such that $z_1 = \sum_i m_i \zeta_i$, $m_i \geq 0$, $\sum_i m_i = 1$. It is immediate to verify that $\rho := \sum_i m_i \delta_{\zeta_i}$ satisfies both $v_{z_1} \rho = 0$ and $z_1 = z_1(\rho)$. To show (2), observe that all $\rho$ satisfying these conditions must be of this form for some $\{ m_i \}$, since (a), $\rho$ is a probability density supported on the zero set of $v_{z_1}$ if and only if $\rho = \sum_i m_i \delta_{\zeta_i}$ for some convex coefficients $\{ m_i\}$, and (b), $z_1$ is the center of mass of said $\rho$ if and only if $z_1 = \sum_i m_i \zeta_i$.
\end{proof}

A depiction of the conic sections that define the zero set of $v$ (as constructed in the proof of Lemma \ref{zero_set_lem}) and the corresponding convex hulls are shown in Figure \ref{fig: solution intersection points}.

\begin{figure}[!h]
    \centering
    \includegraphics[width=\textwidth]{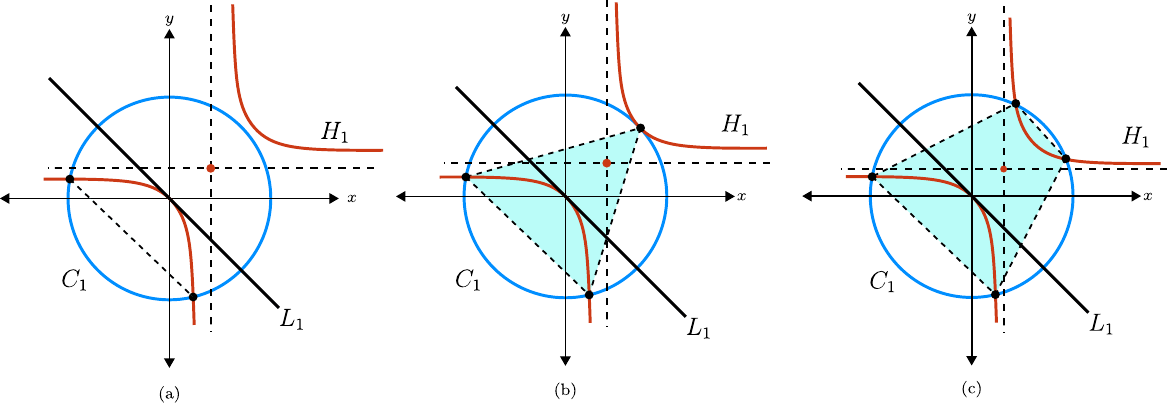}
    \caption{Diagrammatic description of the three cases of the intersection between the hyperbola $H_{1} = \{ ax+by- 2xy = 0 \}$ and the unit circle $C_{1} = \{ x^{2}+y^2- 1=0 \}$; (a) two points of intersection, (b) three points of intersection, (c) four points of intersection. The red point denotes the center of the hyperbola given by $(b/2,a/2)$, the black dots denote the points of intersection between $C_{1},H_{1}$. The black line $L_{1} = \{ y + (a/b)x = 0 \}$ is the tangent to $H_{1}$ at the origin and must contain the complex order parameter $z_1$ under the usual identification of $\mathbb{C}$ with $\R^2$. The shaded region in cyan denotes the convex hull formed by the points of intersection.}
    \label{fig: solution intersection points}
\end{figure}

All of this suggests the following algorithm for identifying the possible equilibrium points of the system:
\begin{enumerate}
    \item[A1] Sweep over the parameter space $(A,\angle z_1)$ to determine all possible rescaled velocity fields $\tilde{v}$.
    \item[A2] For each rescaled velocity field $\tilde{v}$, compute its zero set $\{\zeta_i\}$ via the intersection of conics construction above.
    \item[A3] For each zero set $\{ \zeta_i\}$, determine the values of $K/K_s$ (if any) for which $z_1$ falls within the convex hull of $\{ \zeta_i\}$.
    \item[A4] For each pair $(K/K_s,z_1)$ for which $z_1$ falls within the convex hull above, determine the sets of convex coefficients $\{m_i\}$ for which $z_1 = \sum_i m_i \zeta_i$.
    \item[A5] Any $\rho$ of the form $\rho = \sum_i m_i \delta_{\zeta_i}$ for the sets of convex coeffients above is then a valid equilibrium point corresponding to the parameter value $K/K_s$.
\end{enumerate}

\noindent Note that while the \emph{zero sets} are a function of just two parameters: $K|z_1|/K_s$ and $\angle z_1$, the \emph{equilibrium points} are a function of three: $K/K_s$, $|z_1|$, and $\angle z_1$.

\section{Stability of Equilibria} \label{sec: stability}

So far, we have looked at the first-order (equilibrium) conditions for the Eulerian dynamics \eqref{eulerian_dynamics}. In this section, we shift our focus to the second-order conditions, which shed light on the stability of the equilibria identified in Section \ref{sec: equilibria}. Here, however, we run into a problem with the Eulerian representation. Namely, the Eulerian representation only permits one to perturb equilibria using vector fields, but since equilibria are discrete, such perturbations must preserve their discrete components. In other words, the Eulerian description does not allow us to perturb equilibria by breaking up discrete components. This is in conflict with the physical assumptions underlying the model, which allow for such perturbations. This is related to the degeneracy of the metric in the Eulerian representation described in Section \ref{subsec: wasserstein gradient flows}. Thus, we elect to work in the Lagrangian setting for this portion of the analysis.

Before we state the stability results formally, we briefly recall our discussion from Section \ref{subsec: fixed points and stability}. The conditions we compute are conditions for \emph{directional local minima}, which are necessary for asymptotic stability in the infinite-dimensional setting, and whose strong form is sufficient for asymptotic stability in the finite-dimensional settings that the infinite-dimensional model approximates. Thus, the stability conditions that we provide fully describe the behavior of observed fixed points in practice.

The next result follows immediately from a direct application of Lemma \ref{stability_lemma} to the particular form for $\nabla^2 E$.

\begin{lemma} \label{second_variation_lemma}
    For a fixed point $\theta$ to be a directional local minimum, it is necessary that the quadratic form
    \begin{multline} \label{quadratic_form}
        D^2E(\theta)[p,p] = \int \left( 2K_s \cos(2 \theta(x)) - K \int \cos(\theta(x) - \theta(y)) \, \mathrm{d}y \right) p^2(x) \, \mathrm{d}x \\
        + K \iint \cos(\theta(x) - \theta(y)) p(y) p(x) \, \mathrm{d}y \, \mathrm{d}x
    \end{multline}
    be positive semi-definite, that is, $\geq 0$ for all perturbations $p \in L^2$. Alternatively, it is sufficient for $\theta$ to be a \emph{strong} directional local minimum that the quadratic form \eqref{quadratic_form} be uniformly positive definite, that is, $\geq c \, \| p \|_{L^2}^2$ for some $c > 0$ and all perturbations $p \in L^2$.
\end{lemma}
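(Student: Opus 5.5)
The plan is to compute the second variation \eqref{second_variation_eq} of the Lagrangian energy \eqref{lagrangian_energy} explicitly in the constant-$K$ case, and then invoke Lemma \ref{stability_lemma}. Since $\Theta$ is flat and we may work in local coordinates, it suffices to fix $\theta$ and $p,q \in L^2$ and differentiate $(h,\ell) \mapsto E(\theta + hp + \ell q)$ twice. By Lemma \ref{stability_lemma} (and its proof), $h \mapsto E(\theta+hp)$ is $C^2$ and the Taylor expansion along each direction holds. So the only real content is identifying $D^2E(\theta)[p,p]$ with the displayed quadratic form \eqref{quadratic_form}.

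For the computation I will treat the two terms of \eqref{lagrangian_energy} separately, with $K(x,y)\equiv K$. For the injection term $-\tfrac{K_s}{2}\int \cos(2\theta(x))\,\mathrm{d}x$, differentiating $\cos(2\theta(x)+2hp(x))$ twice in $h$ at $h=0$ gives $-4\cos(2\theta(x))p^2(x)$. This yields the contribution $2K_s\int \cos(2\theta(x))p^2(x)\,\mathrm{d}x$. For the interaction term $\tfrac{K}{2}\iint \cos(\theta(x)-\theta(y))\,\mathrm{d}x\,\mathrm{d}y$, the second derivative of $\cos(\theta(x)-\theta(y)+h(p(x)-p(y)))$ at $h=0$ is $-\cos(\theta(x)-\theta(y))(p(x)-p(y))^2$. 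Expanding the square and using the symmetry of $\cos(\theta(x)-\theta(y))$ under $x\leftrightarrow y$, the two diagonal pieces combine into $-K\int\left(\int\cos(\theta(x)-\theta(y))\,\mathrm{d}y\right)p^2(x)\,\mathrm{d}x$. The cross piece gives $+K\iint \cos(\theta(x)-\theta(y))p(x)p(y)\,\mathrm{d}y\,\mathrm{d}x$. Summing the two contributions reproduces \eqref{quadratic_form}, and polarization recovers the bilinear form, equivalently the Hessian $\nabla^2E(\theta)$ as a multiplication operator plus an integral operator with kernel $K\cos(\theta(x)-\theta(y))$.

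The only step requiring care is justifying differentiation under the integral sign, so that the limit in \eqref{second_variation_eq} equals the formal computation for arbitrary $p\in L^2$, including unbounded ones. I would handle this by noting that all integrands are smooth in $h$, with second $h$-derivatives bounded by $C(p(x)^2+p(y)^2)$, which is integrable since $p\in L^2$. Dominated convergence then applies, and the same bound gives continuity in $h$ of $D^2E(\theta+hp)[p,p]$, as used in Lemma \ref{stability_lemma}.

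With \eqref{quadratic_form} identified as $\langle p,\nabla^2E(\theta)p\rangle$, part (a) of Lemma \ref{stability_lemma} gives the necessary condition: positive semidefiniteness. Part (b) gives the sufficient condition: uniform positivity $\geq c\|p\|_{L^2}^2$ implies a strong directional local minimum.
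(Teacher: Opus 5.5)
Your proposal is correct and follows essentially the same route as the paper. Like the paper, you expand $E$ from \eqref{lagrangian_energy} (with $K$ constant) along $\theta + hp + \ell q$, identify the resulting second variation with \eqref{quadratic_form}, and then apply Lemma~\ref{stability_lemma}; your dominated-convergence justification for differentiating under the integral sign, and for the continuity of $h \mapsto D^2E(\theta+hp)[p,p]$, is a detail the paper leaves implicit but does not change the argument.
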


\begin{proof}
    The proof follows immediately from the preceding lemma, applied to the second variation of $E$. We compute the second variation by the usual method (i.e., expand $E(\theta(t,s))$ around a function $\theta(t,s) = \theta + tp + sq$, and consider terms which are bilinear in $t,s$) to obtain
    \begin{align}
        D^2E(\theta)[p,q] ~=&~ \int \left( 2K_s \cos(2 \theta(x)) - K \int \cos(\theta(x) - \theta(y)) \, \mathrm{d}y \right) p(x) q(x) \, \mathrm{d}x \\
        &~+ K \iint \cos(\theta(x) - \theta(y)) p(y) q(x) \, \mathrm{d}y \, \mathrm{d}x .
    \end{align}
    Notice that the second variation is by definition a \emph{bilinear form} $D^2E(\theta)[p,q]$. Stability can be determined from positivity of the \emph{quadratic form} $D^2E(\theta)[p,p]$, which is equivalent to positivity of the linear operator/Riesz representation/Hessian $\nabla^2E(\theta)$, which is defined by $D^2E(\theta)[p,q] = \langle \nabla^2E(\theta)[p] , q \rangle$.
\end{proof}

The above condition is somewhat unwieldy to work with. In particular, it is hard to get a clean sufficient condition out of this form since it is not possible to isolate $p$. However, while it is hard to get sufficient conditions, it is relatively easy to get necessary conditions -- one simply tests the form against various perturbations $p$, and concludes that any equilibria which are destabilized by $p$ are certainly unstable in general. By finding a clever sequence of perturbations, one can progressively narrow down the list of ``candidate'' stable equilibria until all candidate stable equilibria fall into some nice class. Then, one can check sufficient conditions on just that class. This is the approach we take here. After a lengthy argument involving various symmetries, transformations, perturbations, and cases (all relegated to the Appendix \ref{necessary_stability_appendix}), we arrive at the following.

\begin{theorem} \label{necessary_stability_thm}
    For almost all values\footnote{The single remarkable exception happens when $K/K_s = -2$ in the very particular configuration of two masses, each of mass $0.5$, at $|\theta_1| \leq \pi/4$ and $\theta_2 = \pi - \theta_1$. In this case, the equilibria appear to be neutrally stable.} of parameters $K/K_s$, for a fixed point $\theta$ to be a directional local minimum, it is necessary that it be valued in the set $\{ 0 , \pi \}$.
\end{theorem}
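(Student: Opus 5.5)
The plan is to exploit the product structure of the quadratic form \eqref{quadratic_form} at a fixed point, as classified in Section~\ref{sec: equilibria}. By Lemma~\ref{lem: density as sum of dirac masses}, $\theta$ takes finitely many values $\zeta_1,\dots,\zeta_k$ ($k\le 4$), all zeros of $v_{z_1}$, on sets $I_i\subset\I$ of measure $m_i$. Since $\int\cos(\theta(x)-\theta(y))\,\mathrm{d}y=|z_1|\cos(\theta(x)-\angle z_1)$, the form splits as
\begin{equation*}
D^2E(\theta)[p,p]=\int c(\theta(x))\,p(x)^2\,\mathrm{d}x+K\Big|\int e^{i\theta(x)}p(x)\,\mathrm{d}x\Big|^2 ,
\end{equation*}
with $c(\zeta)=2K_s\cos 2\zeta-K|z_1|\cos(\zeta-\angle z_1)$. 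Differentiating \eqref{v_z_phase_eqn} gives $c(\zeta)=-v_{z_1}'(\zeta)$. So the diagonal term is the linearized pointwise flow, and the second term is a rank-two mean-field correction.

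\textbf{Step 1: occupied zeros must be non-repelling.} Take $p$ supported in a single $I_i$ with $\int_{I_i}p=0$. Because $e^{i\theta}$ is constant on $I_i$, the rank-two term vanishes. Positive semidefiniteness then forces $c(\zeta_i)\ge0$, i.e.\ $v_{z_1}'(\zeta_i)\le 0$, for every zero carrying positive mass.

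By the sink/source alternation from Lemma~\ref{zero_set_lem} (sink--source, sink--semistable--source, or sink--source--sink--source), at most two zeros are non-repelling. A semistable zero has $c=0$ and may survive, but it arises only on a codimension-one set of $(A,\angle z_1)$, and I expect it can be excluded except for exceptional parameters. Hence every candidate has at most two clusters, each located at a sink.

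\textbf{Step 2: one cluster.} Here $z_1=e^{i\zeta}$ and $v(\zeta)=0$ forces $\sin 2\zeta=0$, so $\zeta\in\{0,\pi/2,\pi,3\pi/2\}$. For $\zeta=\pm\pi/2$, test $p\equiv1$: the form equals $c+K=-2K_s<0$. This leaves only $\zeta\in\{0,\pi\}$.

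\textbf{Step 3: two clusters (the main obstacle).} Test $p=\alpha\mathbf 1_{I_1}+\beta\mathbf 1_{I_2}$. This reduces positive semidefiniteness to a $2\times2$ matrix condition
\begin{equation*}
\begin{pmatrix} m_1c_1+Km_1^2 & Km_1m_2\cos(\zeta_1-\zeta_2)\\ Km_1m_2\cos(\zeta_1-\zeta_2) & m_2c_2+Km_2^2\end{pmatrix}\succeq0 ,
\end{equation*}
to be combined with the fixed-point equations $v(\zeta_1)=v(\zeta_2)=0$ and $z_1=m_1e^{i\zeta_1}+m_2e^{i\zeta_2}$.

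I will use the symmetries $\theta\mapsto-\theta$ and $\theta\mapsto\theta+\pi$, and the relabeling $1\leftrightarrow2$, to normalize. I then eliminate $z_1$, writing the stationarity condition as $K\operatorname{Im}(z_1e^{-i\zeta_j})=-K_s\sin2\zeta_j$. The goal is to express the determinant and trace in terms of $(\zeta_1,\zeta_2,m_1)$ and $K/K_s$.

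The claim to verify is that the matrix is PSD with $\zeta_j\notin\{0,\pi\}$ only on a null set of $K/K_s$. I expect this exceptional set to be the footnoted configuration at $K/K_s=-2$, with $m_1=m_2=\tfrac12$ and $\zeta_2=\pi-\zeta_1$, where the determinant vanishes identically. The most natural further test is $p\equiv1$, the rotation direction. If the $2\times2$ test proves insufficient in some subcase, I would split by the sign of $K$ and by whether the two sinks are antipodal, $\zeta_2=\zeta_1+\pi$ (which forces $z_1$ onto that axis). This trigonometric case analysis is where I expect most of the work and the risk to lie.
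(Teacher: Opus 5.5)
\textbf{Gap: Step 3 is not carried out.} Your Steps 1 and 2 are correct and match the paper. Your zero-mean perturbation inside one cluster is in fact a cleaner version of the paper's shrinking-indicator test, since it annihilates the rank-two term exactly. Your $2\times2$ matrix $H$ is also the object the paper uses: with $M=\mathrm{diag}(m_1,m_2)$, the reduced Jacobian \eqref{Jacobian} is $J=-K_s^{-1}M^{-1}H$, so $H\succeq0$ iff $J$ has nonpositive eigenvalues. However, Step 3, which carries the entire content of the theorem, is only announced as ``the claim to verify''. Nothing in the proposal shows that the $2\times2$ test excludes non-binarized two-cluster equilibria away from $K/K_s=-2$.

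\textbf{The missing identity.} What you need is a closed form for $\det H$ at fixed points. Put $m=m_1$, $A=K/K_s$, $s=\zeta_1+\zeta_2$, $d=\zeta_1-\zeta_2$. Subtracting the two fixed-point equations gives $\sin d\,(A-2\cos s)=0$. Their $m$-weighted sum gives $m\sin2\zeta_1+(1-m)\sin2\zeta_2=0$, i.e.\ $\sin s\cos d=(1-2m)\cos s\sin d$. Expanding the determinant, the $K^2$ terms cancel:
\begin{equation*}
\det H=m(1-m)K_s^2\Big[4\cos2\zeta_1\cos2\zeta_2-2A\cos d\,\big(m\cos2\zeta_1+(1-m)\cos2\zeta_2\big)\Big].
\end{equation*}
With $P=\cos s\cos d$ and $Q=\sin s\sin d$, the bracket is $4(P^2-Q^2)-2A\cos d\,[P+(1-2m)Q]$. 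If $\sin d\neq0$, then $A\cos d=2P$ turns it into $-4Q[Q+(1-2m)P]$, and the weighted-sum identity gives $[Q+(1-2m)P]\sin d=\sin s$. Hence
\begin{equation*}
\det H=-4K_s^2\,m(1-m)\sin^2(\zeta_1+\zeta_2).
\end{equation*}
No trace computation and no split by the sign of $K$ are needed.

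\textbf{Finishing the argument.} $\det H\geq0$ forces $\sin s=0$. Then $\cos s=\pm1$ and $\sin d\neq0$ in the weighted-sum identity force $m=\tfrac12$, and $A=2\cos s=\pm2$.
\begin{itemize}
\item The branch $s=0$ ($A=2$, $\zeta_2=-\zeta_1$, $z_1=\cos\zeta_1$) is killed by your own Step 1. There $v_{z_1}(\zeta)=2K_s\sin\zeta\,(\cos\zeta_1-\cos\zeta)$ and $v_{z_1}'(\pm\zeta_1)=2K_s\sin^2\zeta_1>0$, so both clusters sit at sources. This replaces the paper's appeal to the two stable zeros lying in opposite half-circles.
\item The branch $s=\pi$ is exactly the footnoted family at $A=-2$, where $\det H=0$. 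The test $p\equiv1$ restricts it (after relabeling) to $|\zeta_1|\le\pi/4$.
\item The antipodal case $\sin d=0$ must be disposed of first, since the identity divides by $\sin d$. There the fixed-point equations reduce to $\sin2\zeta_1=\sin2\zeta_2=0$, leaving $\{0,\pi\}$ or $\{\pi/2,3\pi/2\}$; the latter is excluded by $p\equiv1$.
\end{itemize}

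\textbf{Drop the semistable-zero argument.} Because $A=(K/K_s)|z_1|$ with $|z_1|$ free, a codimension-one condition on $(A,\angle z_1)$ does not yield a null set of $K/K_s$. The argument is also unnecessary. In the sink--semistable--source configuration the non-repelling zeros are exactly the sink and the semistable point, so Step 1 still leaves at most two clusters. The determinant identity uses only the fixed-point equations, not that the clusters sit at strict sinks.
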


\begin{proof}
    See Appendix \ref{necessary_stability_appendix}.
\end{proof}

In the Eulerian setting, this translates to: for almost all values of parameters $K/K_s$, for a fixed point $\rho$ to be a directional local minimum, it is necessary that it be supported on $\{ 0 , \pi \}$.

Next, following the approach outlined above, we check sufficient conditions for the stability of phase functions valued in the set $\{ 0, \pi \}$. By the invariance of the dynamics under relabeling (see Sections \ref{sec: problem description} and \ref{subsec: constant k case}), we may assume without loss of generality that $\theta(x) = 0$ for $x \in [0,m)$ and $\theta(x)=\pi$ for $x \in (m,1]$. In general, the stability of these configurations depends both on the value of $m$ (i.e. on the relative amount of mass at $0$ and $\pi$) and on the value of the parameter $K/K_s$. By finding tight bounds on the quadratic form \eqref{quadratic_form} in this case (relegated to Appendix \ref{sufficient_stability_appendix}), we arrive at the following.

\begin{theorem} \label{sufficient_stability_thm}
    Let $\theta$ be defined by
    \begin{equation}
        \theta(x) = \begin{cases}
            0 \quad \text{if} \quad x \in [0,m) \\
            \pi \quad \text{if} \quad x \in (m,1]
        \end{cases} .
    \end{equation}
    Then the following conditions are necessary for $\theta$ to be a directional local minimum:
    \begin{itemize}
        \item If $m \in \{ 0,1 \}$: $K/K_s \leq 2$.
        \item If $m = 0.5$: $-2 \leq K/K_s$.
        \item If $m \notin \{ 0, 0.5, 1 \}$: $-2 \leq K/K_s \leq 2 / |2m - 1|$.
    \end{itemize}
    The same bounds are sufficient for $\theta$ to be a directional local minimum when the inequalities above are strict.
\end{theorem}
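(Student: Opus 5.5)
The plan is to evaluate the quadratic form \eqref{quadratic_form} of Lemma \ref{second_variation_lemma} explicitly at the binarized phase function. Write $s(x) := \cos\theta(x) \in \{\pm 1\}$, so that $s = 1$ on $[0,m)$ and $s = -1$ on $(m,1]$. Then $\cos(2\theta(x)) = 1$, $\cos(\theta(x)-\theta(y)) = s(x)s(y)$, and $\int s(y)\,\mathrm{d}y = 2m-1$. Hence \eqref{quadratic_form} collapses to
\begin{equation*}
    Q(p) := D^2E(\theta)[p,p] = \int w(x)\, p^2(x)\,\mathrm{d}x + K \Big( \int s\, p \,\mathrm{d}x \Big)^2, \qquad w(x) := 2K_s - K(2m-1)\, s(x).
\end{equation*}
The weight $w$ is piecewise constant: it equals $w_+ = 2K_s - K(2m-1)$ on the $0$-cluster and $w_- = 2K_s + K(2m-1)$ on the $\pi$-cluster. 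The whole problem thus reduces to deciding when a diagonal multiplication operator plus a rank-one term is positive semidefinite (for necessity) or uniformly positive definite (for sufficiency). The latter gives a strong directional local minimum, and hence a directional local minimum, by Lemma \ref{second_variation_lemma}.

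\emph{Step 1: the diagonal part.} I would first show that $w \geq 0$ a.e.\ is necessary, whatever the sign of $K$. On any cluster of positive measure, take $p$ supported in that cluster with $\int s p = 0$, for example a mean-zero oscillating function. Then the rank-one term vanishes and $Q(p) = w_\pm \|p\|^2$. The requirement $w_\pm \geq 0$ on clusters of positive measure gives:
\begin{itemize}
    \item $K/K_s \leq 2$ when $m \in \{0,1\}$, since only one cluster is present and its weight is $2K_s - K$;
    \item no constraint when $m = 1/2$, since then $w \equiv 2K_s$;
    \item $|K|\,|2m-1| \leq 2K_s$ in general.
\end{itemize}
For $K > 0$ the rank-one term is nonnegative, so $\min(w_+,w_-) > 0$ already yields $Q(p) \geq \min(w_\pm)\|p\|^2$. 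This settles the upper bound $K/K_s < 2/|2m-1|$ as sufficient, and as necessary in its non-strict form.

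\emph{Step 2: $K<0$, the main step.} Write $K = -k$ with $k > 0$. The rank-one term is now negative, and I will control it by weighted Cauchy--Schwarz (assuming $w > 0$): $(\int s p)^2 \leq \int w p^2 \cdot \int w^{-1}$. This gives
\begin{equation*}
    Q(p) \geq \Big(1 - k\int w^{-1}\Big) \int w p^2 .
\end{equation*}
The bound is attained by $p = s/w$, so $Q \geq 0$ if and only if $k \int w^{-1} \leq 1$. Set $a = 2K_s$ and $c = k(2m-1)$, so that $w_+ = a + c$ and $w_- = a - c$. Then
\begin{equation*}
    \int w^{-1} = \frac{m}{a+c} + \frac{1-m}{a-c} = \frac{a - c(2m-1)}{a^2 - c^2}.
\end{equation*}
The condition $k(a - k(2m-1)^2) \leq a^2 - k^2(2m-1)^2$ simplifies to $ka \leq a^2$, that is, $K/K_s \geq -2$. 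When $m \in \{0,1\}$ the same computation gives $k/(a+k) \leq 1$, which always holds, so there is no lower bound in that case, matching the statement. For $K<0$ the constraint $k|2m-1| < a$ from Step 1 is implied by $k < a$. In the strict case, $Q(p) \geq (1 - k\int w^{-1})\, \min(w_\pm)\, \|p\|^2$ with a positive constant, which is the uniform positivity needed for sufficiency.

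I expect the only delicate points to be the following. First, I must check that the test functions in Step 1 exist in every nondegenerate cluster, which holds because each cluster is an interval of positive length. Second, I must handle the boundary cases where some $w_\pm = 0$ before applying Cauchy--Schwarz; there the non-strict inequalities are only claimed as necessary, so I can treat them by a limiting argument or directly. Third, the algebraic simplification in Step 2 needs to be checked carefully, since it is what produces the clean threshold $-2$ independently of $m$.
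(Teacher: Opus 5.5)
Your proposal is correct. Its skeleton matches the paper's: both reduce the quadratic form at the binarized state to $\int w\,p^2 + K(\int s\,p)^2$. For $K>0$, both drop the nonnegative rank-one term and use mean-zero perturbations supported in a single cluster to show that the upper thresholds are attained.

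The genuine difference is the case $K<0$.

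\emph{The paper's route.} It normalizes $\|p\|=1$ and reduces to four scalars: $\int_0^m p^2$, $\int_m^1 p^2$, $\int_0^m p$, $\int_m^1 p$. It applies Cauchy--Schwarz on each cluster separately, then minimizes a one-variable function of the mass split. It asserts without detail that the minimum is at $a=1-m$, which yields $Q(p)\ge(2K_s+K)\|p\|^2$. It also handles $m\in\{0,1\}$ and $m=1/2$ as separate cases.

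\emph{Your route.} The single weighted Cauchy--Schwarz with weight $w$ is the standard criterion for a positive multiplication operator minus a rank-one term: $W-k\,s\otimes s\succeq 0$ if and only if $k\int w^{-1}\le 1$. This buys several things.
\begin{itemize}
\item It comes with the explicit extremizer $p=s/w$.
\item The $m$-independence of the threshold $-2$ appears as a visible algebraic cancellation, rather than resting on an unproved minimization.
\item It covers every $m$ in one computation, and would extend verbatim to configurations with more clusters.
\end{itemize}

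\emph{What you give up.} Your coercivity constant is not sharp. For $-2<K/K_s<0$ your bound works out to
\[
Q(p)\ge \frac{2K_s}{2K_s+|K|\,|2m-1|}\,(2K_s+K)\,\|p\|^2 .
\]
The paper's argument instead identifies the exact bottom of the spectrum, $2K_s+K$, for $m\in(0,1)$. You could recover this sharp value within your framework from the secular equation $k\int (w-\lambda)^{-1}=1$, whose unique root below $\min w$ is $\lambda=2K_s+K$. Neither point affects the theorem as stated, since only positivity of the constant is needed.

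The boundary case you flagged needs no limiting argument. If $K<0$, $m\in(0,1)$, and $w$ vanishes on one cluster, then $k=2K_s/|2m-1|>2K_s$. Taking $p$ to be the indicator of that cluster gives $Q(p)<0$ directly.
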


\begin{proof}
    See Appendix \ref{sufficient_stability_appendix}.
\end{proof}

This completes the classification of stability of equilibria in this system. Additionally, the above calculation also identifies the so-called ``threshold for binarization'' in this system, that is, the threshold for which there exist nontrivial (i.e. $m \notin \{ 0,1 \}$) stable binarized states. The above bounds predict this threshold to be $-2 \leq K/K_s$.

\section{Numerical Results}
\label{sec: numerical results}

Here, we present numerical evidence that the mean-field model correctly predicts behavior regimes in a wide variety of homogeneous random graphs. We consider the following four categories (a) all-to-all connectivity with constant weight (b) all-to-all connectivity with uniform-random weight (c) Erd\H{o}s-R\'enyi binary graph with constant weight (d) Erd\H{o}s-R\'enyi weighted graph with uniform-random weight. We illustrate the experimental procedure for case (c), with the other cases being similar. First, we generate a $N = 200$-node Erd\H{o}s-R\'enyi graph with edge probability $p_{i}\in [0.5,1]$, a random connection weight $A_i$ such that the effective coupling $K_{i} = p_{i}A_{i}$ satisfies $K_{i}/K_s \in [-5,10]$, and a random initial mass fraction $m_i(0)$. For this graph instance $\mathcal{G}_{i}$, we initialize the OIM at a configuration $\theta(0) = \theta^{\ast} + \varepsilon \nu$ where $\theta^*$ is a binarized state with $m_i(0)$ mass at $0$ and $1-m_i(0)$ mass at $\pi$, $0 < \varepsilon\ll1$, and $\nu\sim\mathbf{N}(0,\sigma^{\ast})$ is a random perturbation. The system is then allowed to evolve and settle to an equilibrium point, and the final mass fraction $m_i(\infty)$ is recorded. The value of $K_i/K_s$ for this graph instance is plotted against the final mass fraction $m_i(\infty)$. This experiment is repeated many times to obtain the scatter plots shown in Figure~\ref{fig: stability_fig}. Now, as the result in Theorem~\ref{sufficient_stability_thm} is only valid in the limit $N\to\infty$, performing the above for a finite $N$ induces an approximation error. Following Observation \ref{finite_dimenional_observation}, however, it is precisely the first- and second-order information which transfers between the finite- and infinite-dimensional settings, so we expect the second-order conditions of Theorem~\ref{sufficient_stability_thm} to predict the observed behavior at large but finite $N$. As can be seen, our theoretical prediction for the continuum model agrees remarkably well with the finite-dimensional numerical results for all four cases. 

\begin{figure}[!h]
	\centering
	\includegraphics[width=1.0\linewidth]{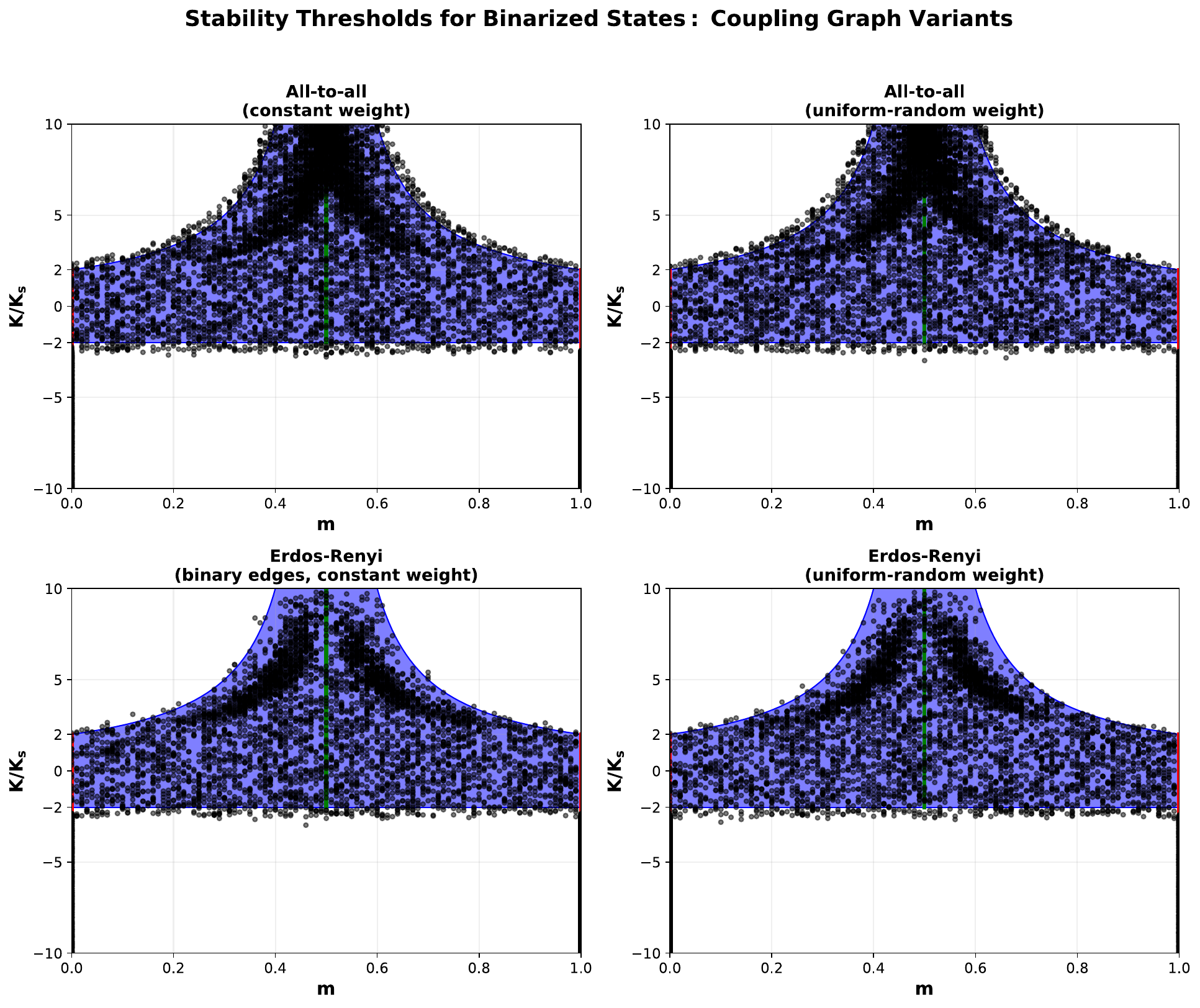}
	\caption{\textbf{Stability thresholds for binarized states.} \textbf{Theoretical prediction: } Binarized states are stable for $K/K_s<2$ when $m\in\{0,1\}$ (red), for $-2 < K/K_s$ when $m=0.5$ (green), and for $-2<K/K_s<2/|2m-1|$ otherwise (blue). \textbf{Experimental results: } Instances of binarized states in random networks are generated, with random connection probability, random connection strength, and random mass fraction. For each instance, the binarized state is perturbed, and then allowed to evolve and settle at a stable equilibrium. The value of $K/K_s$ for each instance is recorded against the mass fraction $m$ for the final stable configuration. Thus, each data point represents a stable configuration for a given instance of a random network. The above experiment is performed for the following cases (a) all-to-all connectivity with constant weight (b) all-to-all connectivity with uniform-random weight (c) Erd\H{o}s-R\'enyi binary graph with constant weight (d) Erd\H{o}s-R\'enyi weighted graph with uniform-random weight. The blue region indicates the theoretical predictions in Theorem~\ref{sufficient_stability_thm}. Observe that below the threshold for binarization ($K/K_s < -2$), the only stable fixed points are single clusters at $0$ or $\pi$. As can be seen, there is a close agreement with the theoretical predictions.}
	\label{fig: stability_fig}
\end{figure}

\section{Concluding Remarks}
\label{sec: conclusion}

In this article, we have analyzed the symmetric case of an infinite-dimensional OIM. By taking the limit $N\to\infty$, we can represent the system in both the Lagrangian and Eulerian frameworks, and we demonstrate their equivalence in the symmetric case. Using the Eulerian representation, we obtain a parameterized representation of the equilibrium densities. Using the Lagrangian representation, we perform a second-order stability analysis and show that only those densities supported at $\{0,\pi\}$ can be stable. Further, we obtain necessary and sufficient stability conditions in terms of the parameters of the infinite-dimensional OIM. Using these conditions, we calculate tight bounds on the second-harmonic injection strength, and verify numerically that these bounds predict the observed behavior for homogeneous random graphs. In the future, using the obtained results as a scaffold, the authors would like to address the question of more general graph topologies. In particular, performing rigorous equilibrium and stability analyses on small-world networks and power-law networks would be an interesting future direction.

\appendix

\section{Proof of Proposition \ref{regularity_prop}} \label{regularity_prop_proof}

We first compute the G\^ateaux derivative by Taylor expansion, using the definition \eqref{first_variation_eq}:
\begin{multline}
    DE(\theta)[v] = -\frac{1}{2} \int \int K(x,y) \sin(\theta(x)-\theta(y)) \, (v(x) - v(y)) \, \mathrm{d}x \, \mathrm{d}y \\
    + K_s \int \sin(2\theta(x)) \, v(x) \, \mathrm{d}x .
\end{multline}
Next we isolate $v(x)$ by using a change of variables, together with the symmetry of $K$, to turn the term $v(y)$ into $v(x)$
\begin{equation}
    DE(\theta)[v] = \int \left( - \int K(x,y) \sin(\theta(x) - \theta(y)) \, \mathrm{d}y + K_s \sin(2\theta(x)) \right) v(x) \, \mathrm{d}x
\end{equation}
and then apply \eqref{gradient_eq} to obtain the candidate gradient
\begin{equation}
    \nabla E(\theta)(x) = - \int K(x,y) \sin(\theta(x) - \theta(y)) \, \mathrm{d}y + K_s \sin(2\theta(x)) .
\end{equation}
It remains to check (a) that $DE(\theta)$ is a bounded operator, (b) that $DE$ satisfies the Fr\'echet differentiability condition
\begin{equation} \label{frechet_condition}
    \frac{| E(\theta + v) - E(\theta) - DE(\theta)[v] |}{\|v\|_{L^2}} \to 0 \quad \text{as} \quad \|v\|_{L^2} \to 0 ,
\end{equation}
and (c) that $\nabla E$ is Lipschitz. We proceed with one condition at a time. 
\begin{enumerate}
    \item[(a)] \textbf{$DE(\theta)$ is a bounded operator}: Let $|\I| := \int_\I 1 \, \mathrm{d}x$ denote the volume of $\I$. Supposing that $K$ is uniformly bounded by $\| K \|_\infty$, bounding $\sin$ by 1, and using Cauchy-Schwarz, we have
\begin{equation}
    | DE(\theta)[v] | \leq (\| K \|_\infty |\I| + K_s) \int |v(x)| \, \mathrm{d}x \leq (\| K \|_\infty |\I| + K_s) |\I|^{1/2} \| v \|_{L^2} .
\end{equation}
Thus, $DE$ is bounded, and by the Riesz representation theorem, $\nabla E$ is well-defined. Note also that $\nabla E$ is bounded: from the formula above, $| \nabla E(\theta)(x) |$  $\leq \| K \|_\infty |\I| + K_s$ pointwise, and therefore $\| \nabla E(\theta) \|_{L^2} \leq (\| K \|_\infty |\I| + K_s) |\I|^{1/2}$ uniformly in $\theta$.
 
\item[(b)] \textbf{$DE$ satisfies the Frechet differentiability condition}: Using the second-order Taylor expansion
\begin{equation}
    \cos(a + h) = \cos(a) - \sin(a) h + r(a, h), \quad |r(a,h)| \leq \tfrac{1}{2} h^2 ,
\end{equation}
valid pointwise for all $a, h \in \R$, we may write
\begin{multline}
    R(\theta,v) := E(\theta + v) - E(\theta) - DE(\theta)[v] \\
    = \frac{1}{2} \int \int K(x,y) r(\theta(x)-\theta(y),v(x)-v(y)) \, \mathrm{d}x \, \mathrm{d}y \\
    - \frac{K_s}{2} \int r(2\theta(x) , 2 v(x)) \, \mathrm{d}x .
\end{multline}
Thus, we have
\begin{equation}
    |R(\theta,v)| \leq \frac{\| K \|_\infty}{4} \int \int (v(x) - v(y))^2 \, \mathrm{d}x \, \mathrm{d}y + K_s \int v^2(x) \, \mathrm{d}x .
\end{equation}
Expanding the square in the first term and using $\big( \int v \big)^2 \geq 0$, we thus obtain
\begin{equation}
    |R(\theta,v)| \leq (\tfrac{1}{2} \| K \|_\infty |\I| + K_s) \| v \|_{L^2}^2 ,
\end{equation}
and the Fr\'echet differentiability follows.

\item[(c)] \textbf{$\nabla E$ is Lipschitz}: First, observe that we may work in local coordinates, since local Lipschitzness with constant $L$ at each point implies global Lipschitzness with constant $L$ (as $\Theta$ is a length space). We first obtain a pointwise bound on $\nabla E(\theta_1) - \nabla E(\theta_2)$ as follows
\begin{align}
    |\nabla E(\theta_1)(x) - \nabla E(\theta_2)(x)| \hspace{-20mm} \\
    &\leq \| K \|_\infty \int |\sin(\theta_1(x)-\theta_1(y)) - \sin(\theta_2(x)-\theta_2(y))| \, \mathrm{d}y \nonumber \\
    & \hspace{20mm} + K_s |\sin(2\theta_1(x)) - \sin(2\theta_2(x))| \nonumber \\
    & \leq \| K \|_\infty \int | \theta_1(x) - \theta_1(y) - \theta_2(x) + \theta_2(y)| \, \mathrm{d}y \\
    & \hspace{20mm} + 2K_s |\theta_1(x)-\theta_2(x)| \nonumber \\
    & \leq \| K \|_\infty \int | \theta_1(x) - \theta_2(x)| + |\theta_1(y) - \theta_2(y)| \, \mathrm{d}y \\
    & \hspace{20mm} + 2K_s |\theta_1(x)-\theta_2(x)| \nonumber \\
    &= (\| K \|_\infty |\I| + 2 K_s)|\theta_1(x)-\theta_2(x)| + \| K \|_\infty \| \theta_1-\theta_2 \|_{L^1} .
\end{align}
We may then compute
\begin{align}
     \|\nabla E(\theta_1) &- \nabla E(\theta_2)\|_{L^2}^2 \\
    &\leq \int \Big( (\| K \|_\infty |\I| + 2 K_s)|\theta_1(x)-\theta_2(x)| \\
    & \hspace{20mm} + \| K \|_\infty \| \theta_1-\theta_2 \|_{L^1} \Big)^2 \, \mathrm{d}x \nonumber \\
    & = (\| K \|_\infty |\I| + 2 K_s)^2 \| \theta_1 - \theta_2 \|_{L^2}^2 \\
    & \hspace{20mm} + 2(\| K \|_\infty |\I| + 2 K_s) \| K \|_\infty \| \theta_1 - \theta_2 \|_{L^1}^2 \nonumber \\
    & \hspace{20mm} + \| K \|_\infty^2 |\I| \| \theta_1 - \theta_2 \|_{L^1}^2 \nonumber \\
    & = (\| K \|_\infty |\I| + 2 K_s)^2 \| \theta_1 - \theta_2 \|_{L^2}^2 \\
    & \hspace{5mm} + \big( 2 \| K \|_\infty (\| K \|_\infty |\I|+2K_s) + \| K \|_\infty^2|\I| \big) \| \theta_1 - \theta_2 \|_{L^1}^2 \nonumber \\
    & \leq (\| K \|_\infty |\I| + 2 K_s)^2 \| \theta_1 - \theta_2 \|_{L^2}^2 \\
    & \hspace{5mm} + \big( 2 \| K \|_\infty (\| K \|_\infty |\I|+2K_s) + \| K \|_\infty^2|\I| \big) |\I| \| \theta_1 - \theta_2 \|_{L^2}^2 \nonumber \\
    & = C \| \theta_1 - \theta_2 \|_{L^2}^2
\end{align}
for a constant
\begin{align}
    C &= (\| K \|_\infty |\I| + 2 K_s)^2 + |\I| \big( 2 \| K \|_\infty (\| K \|_\infty |\I|+2K_s) + \| K \|_\infty^2|\I| \big) \nonumber\\
    &= 4(\| K \|_\infty |\I|+K_s)^2 .
\end{align}
Thus $\nabla E$ is Lipschitz with constant
\begin{equation}
    L = C^{1/2} = 2(\| K \|_\infty |\I| + K_s) .
\end{equation}
\end{enumerate}

\section{Proof of Theorem \ref{necessary_stability_thm}} \label{necessary_stability_appendix}

The simplest perturbation is the constant perturbation $p(x) \equiv p$. Applying this perturbation to the quadratic form \eqref{quadratic_form}, terms cancel, and we obtain
\begin{equation}
    D^{2}E(\theta)[p,p] = \left( \int 2K_s \cos(2 \theta(x)) \, dx \right) p^2 .
\end{equation}
For this quantity to be $\geq 0$ for all $p$, it is necessary that
\begin{equation} \label{constant_perturbation_condition}
    \int \cos(2 \theta(x)) \, dx ~\geq~ 0 .
\end{equation}
We interpret this condition as expressing that ``the values in $\theta$ (equivalently, the mass in $\rho$) must be concentrated sufficiently near the minima of the second harmonic injection $-\cos(2\theta)$, i.e., sufficiently near $\{0,\pi\}$.''

For our next perturbation, we take $p = \pm I(w,\varepsilon)$, a positive or negative indicator function on an $\varepsilon$ neighborhood of the point $w \in [0,1]$. This perturbation is a proxy for a perturbation of a single oscillator. Note that $\theta$ can be taken to be piecewise constant, since it must be valued in the zero set of $v_\theta$, which is discrete. Applying this perturbation to the quadratic form \eqref{quadratic_form}, we obtain
\begin{equation}
    \varepsilon \left( 2 K_s \cos(2 \theta(w)) - K \int \cos(\theta(w) - \theta(y)) \, dy \right) + K \varepsilon^2 .
\end{equation}
For small $\varepsilon$, clearly only the term in parentheses matters. We recognize this term as the negative derivative of the velocity field $v_\theta$ evaluated at $\theta(w)$. Thus, these perturbations are destabilizing if $\theta(w)$ is an unstable zero of $v_\theta$, that is, if $v_\theta'(\theta(w)) > 0$.

Thus, candidate stable equilibria $\theta$ must be valued in the set of stable zeros of $v$ almost everywhere (equivalently, $\rho$ must be supported on the set of stable zeros). Based on the analysis in Section \ref{sec: equilibria}, we know that $v$ can have exactly 2, 3, or 4 zeros. By standard topological arguments, the continuity of $v$ allows us to conclude that $v$ must have exactly 1 or 2 stable zeros. Thus, we may now restrict our attention to candidate stable equilibria $\theta$ having just one or two levels (equivalently, $\rho$ having just one or two discrete components). We consider these cases separately in the following two subsections.

\subsection{Single-Level Case}

In the case where $\theta$ has a single level (equivalently, $\rho$ has a single mass), we take $\theta(x) \equiv \theta$. We evaluate the dynamics \eqref{lagrangian_dynamics} to find that the interaction term vanishes, and are left with
\begin{equation}
    \dot{\theta} = - K_s \sin(2\theta) .
\end{equation}
We can see that these dynamics admit four possible equilibria at $0$, $\pi/2$, $\pi$, and $3\pi/2$. Applying the test \eqref{constant_perturbation_condition} allows us to rule out the equilibria at $\pi/2$ and $3\pi/2$, leaving only those at $0$ and $\pi$ as candidate stable equilibria.

\subsection{Double-Level Case}

We now consider the case where $\theta$ has two levels: $\theta(x) =\theta_1$ on $[0,m)$ and $\theta(x) = \theta_2$ on $(m,1]$. We assume that we are \emph{genuinely} in a two-mass case here (i.e. that $m \notin \{0,1\}$, $\theta_1 \neq \theta_2$) since the single-mass case has already been treated above.

Both $\theta_1$ and $\theta_2$ must be valued in the set of stable zeros of $v$, and by the intersection of conics argument given in Section \ref{sec: equilibria}, if there are two distinct stable zeros of $v$, these must exist in separate halves of the unit circle: one in $(3\pi/2,\pi/2)$ and one in $(\pi/2,3\pi/2)$. Without loss of generality, we may assume that $\theta_1 \in (3\pi/2,\pi/2)$ and $\theta_2 \in (\pi/2,3\pi/2)$.

Evaluating the resulting dynamics \eqref{lagrangian_dynamics} for each level, we obtain
\begin{align}
    \dot{\theta}_1 ~&=~ K(1-m) \sin(\theta_1-\theta_2) - K_s \sin(2\theta_1) \\
    \dot{\theta}_2 ~&=~ Km \sin(\theta_2 - \theta_1) - K_s \sin(2\theta_2) .
\end{align}
By our earlier scaling arguments in Section \ref{sec: equilibria}, we may equivalently consider the normalized dynamics
\begin{align}
    \dot{\tilde{\theta}}_1 ~&=~ A(1-m) \sin(\theta_1-\theta_2) - \sin(2\theta_1) \label{reduced_dynamics_1} \\
    \dot{\tilde{\theta}}_2 ~&=~ Am \sin(\theta_2 - \theta_1) - \sin(2\theta_2) , \label{reduced_dynamics_2}
\end{align}
where $A := K/K_s$.

The equilibrium condition here is that both of the terms above be zero simultaneously. This also means that any linear combination of these terms must be zero. For example, scaling the two equations by $m$ and $1-m$ and summing them, we obtain the condition
\begin{equation} \label{equilibrium_condition_1}
    m \sin(2\theta_1) + (1-m) \sin(2\theta_2) = 0 .
\end{equation}
Similarly, considering the difference of the two terms, we obtain
\begin{equation}
    A \sin(\theta_1 - \theta_2) - \sin(2 \theta_1) + \sin(2\theta_2) = 0 ,
\end{equation}
which can be simplified (using the difference of sines formula) to
\begin{equation}
    \sin(\theta_1 - \theta_2)(A - 2 \cos(\theta_1 + \theta_2)) = 0 .
\end{equation}
In particular, either $\sin(\theta_1 - \theta_2)=0$, or $A - 2 \cos(\theta_1 + \theta_2) = 0$. If $\sin(\theta_1 - \theta_2) = 0$, then since we are assuming $\theta_1 \neq \theta_2$, $\theta_1$ and $\theta_2$ must differ by $\pi$. Considering the dynamics \eqref{reduced_dynamics_1}-\eqref{reduced_dynamics_2} implies that $\sin(2\theta_1)=\sin(2\theta_2)= 0$, and thus either $\{\theta_1,\theta_2\}=\{0,\pi\}$ or $\{\theta_1,\theta_2\}=\{\pi/2,3\pi/2\}$. The latter case is ruled out by our earlier assumptions on $\theta_1,\theta_2$ (or by the test \eqref{constant_perturbation_condition}), leaving $\{\theta_1,\theta_2\}=\{0,\pi\}$ as the only candidate equilibria if $\sin(\theta_1-\theta_2)=0$. Thus, in what follows, we may assume that $A - 2 \cos(\theta_1 + \theta_2) = 0$.

For these equilibria to be stable, it is necessary that the Jacobian matrix
\begin{equation} \label{Jacobian}
J = 
    \begin{bmatrix}
        A (1-m) \cos(\theta_1 - \theta_2) - 2 \cos(2 \theta_1) & -A (1-m) \cos(\theta_1 - \theta_2) \\
        -A m \cos(\theta_1 - \theta_2) & A m \cos(\theta_1 - \theta_2) -2 \cos (2 \theta_2)
    \end{bmatrix}
\end{equation}
have eigenvalues with nonpositive real part. Since $J$ is $2 \times 2$, this holds if and only if $\operatorname{tr} J \leq 0$ and $\det J \geq 0$. It turns out that the determinant condition alone is enough for our purposes.

To compute the determinant, we write $s := \theta_1 + \theta_2$ and $d := \theta_1 - \theta_2$, so that $2\theta_1 = s+d$ and $2\theta_2 = s-d$, and abbreviate $P := \cos(s)\cos(d)$ and $Q := \sin(s)\sin(d)$. Expanding, the $A^2$ terms cancel and we obtain
\begin{equation}
    \det J = -2A \cos(d) \big[ P + (1-2m) Q \big] + 4 \big( P^2 - Q^2 \big) .
\end{equation}
Our earlier condition $A - 2\cos(s) = 0$ gives $A \cos(d) = 2P$, and therefore
\begin{equation}
    \det J = -4 Q \big[ Q + (1-2m) P \big] .
\end{equation}
Now, the equilibrium condition \eqref{equilibrium_condition_1} reads $\sin(s)\cos(d) = (1-2m)\cos(s)\sin(d)$ in these variables. Multiplying $Q + (1-2m)P$ by $\sin(d)$ and substituting this identity gives $\big[ Q + (1-2m)P \big] \sin(d) = \sin(s)$, where we recall that $\sin(d) \neq 0$ since the case $\sin(\theta_1-\theta_2) = 0$ was treated above. We therefore arrive at the remarkably simple expression
\begin{equation} \label{det_J_eq}
    \det J = -4 \sin^2(\theta_1 + \theta_2) .
\end{equation}
In other words, every two-level equilibrium with $\theta_1 + \theta_2 \notin \{ 0 , \pi \}$ is a saddle. Since \eqref{det_J_eq} is always nonpositive, the condition $\det J \geq 0$ forces $\sin(\theta_1+\theta_2) = 0$. Substituting this back into the equilibrium condition \eqref{equilibrium_condition_1} gives $(1-2m)\cos(s)\sin(d) = 0$, and since $\sin(d) \neq 0$ and $\cos(s) = \pm 1$, we conclude that $m = 0.5$.

Now, this implies that either $\theta_1 + \theta_2 = 0$ or $\theta_1 + \theta_2 = \pi$. By our earlier assumption that $\theta_1 \in (3\pi/2,\pi/2)$, $\theta_2 \in (\pi/2,3\pi/2)$, we can see that $\theta_1+\theta_2$ cannot be equal to 0, and therefore it must be equal to $\pi$. Thus, we may also conclude that $\cos(\theta_1+\theta_2) = -1$, and by our earlier condition $A - 2 \cos(\theta_1+\theta_2)$, that $ A = -2$. The condition that $\theta_1+\theta_2=\pi$ also leads to a host of other identities, including $\sin(\theta_1) = \sin(\theta_2)$, $\cos(\theta_1) = - \cos(\theta_2)$, $\sin(\theta_1-\theta_2) = -\sin(2\theta_1) = \sin(2\theta_2)$, and $\cos(\theta_1 - \theta_2) = -\cos(2\theta_1) = -\cos(2\theta_2)$.

The test \eqref{constant_perturbation_condition} then allows us to further limit the possible ranges of $\theta_1$ and $\theta_2$ to $[7\pi/4,\pi/4]$ and $[3\pi/4,5\pi/4]$, respectively. For these equilibria ($m=0.5$, $A=-2$, $\theta_1, \theta_2$ in the given ranges, with $\theta_1+\theta_2 = \pi$), the Jacobian \eqref{Jacobian} appears to only be marginally stable, that is, it has a zero eigenvalue. And indeed, these equilibria appear to be neutrally stable.

\section{Proof of Theorem \ref{sufficient_stability_thm}} \label{sufficient_stability_appendix}

Throughout this appendix, we abbreviate the condition that $\theta$ be a strong directional local minimum as ``stability'', in keeping with the discussion of Section \ref{subsec: fixed points and stability}.

\textbf{Case 1:} We begin with the case where $m=1$, i.e., where $\theta(x) = 0$ on all of $[0,1]$. (The case with $m=0$ is symmetric and has the same stability threshold.) In this case, the quadratic form \eqref{quadratic_form} simplifies to
\begin{align}
    D^2E(\theta)[p,p] ~&=~ (2 K_s - K) \int p^2(x) \, dx + K \int p(x) p(y) \, dx \, dy \\
    ~&=~ (2K_s-K) \| p \|_{L^2}^2 + K \left( \textstyle \int p \right)^2 \\
    ~&=~ 2K_s \| p \|_{L^2}^2 - K \left[ \textstyle \int p^2 - \left(\textstyle \int p \right)^2 \right] .
\end{align}
We can upper and lower bound the bracketed term: $0 \leq \textstyle \int p^2 - \left(\textstyle \int p \right)^2  \leq \| p \|_{L^2}^2$. These upper and lower bounds are both achieved by perturbations $p$ which are constant and zero-mean, respectively. Thus we can obtain the following (tight) bounds on the quadratic form:
\begin{align}
    K ~ \text{positive} \quad &\Rightarrow \quad (2 K_s-K) \| p \|^2 ~\leq~ D^2E(\theta)[p,p] ~\leq~ 2 K_s \| p \|^2 \\
    K ~ \text{negative} \quad &\Rightarrow \quad 2 K_s \| p \|^2 ~\leq~ D^2E(\theta)[p,p] ~\leq~ (2 K_s - K) \| p \|^2 .
\end{align}
The strongest conclusion that can be drawn here is from the lower bound when $K$ is positive. That is, that $2K_s - K > 0$ or $K/K_s < 2$ is sufficient for stability. This same condition is necessary if the strict inequality is relaxed to a non-strict one, since the lower bound is achieved.

\textbf{Case 2:} Next, we treat the case where $m=0.5$. Using the identity $\cos(a-b) = \cos(a)\cos(b) + \sin(a)\sin(b)$, we obtain
\begin{equation}
    D^2E(\theta)[p,p] = 2 K_s \| p \|_{L^2}^2 + K \left( \textstyle\int_0^{0.5} p - \int_{0.5}^1 p \right)^2 = 2 K_s \| p \|_{L^2}^2 + K \left( \textstyle\int p \cos(\theta) \right)^2 .
\end{equation}
Here, we use the bounds $0 \leq \left( \textstyle\int p \cos(\theta) \right)^2 \leq \| p \|_{L^2}^2$,
which are achieved by $p$ having the same sign and opposite sign on $[0,m)$ and $(m,1]$, respectively. Thus we get the following (tight) upper and lower bounds for the quadratic form:
\begin{align}
    K ~ \text{positive} \quad &\Rightarrow \quad 2 K_s \| p \|^2 ~\leq~ D^2E(\theta)[p,p] ~\leq~ (2 K_s + K) \| p \|^2 \\
    K ~ \text{negative} \quad &\Rightarrow \quad (2 K_s + K) \| p \|^2 ~\leq~ D^2E(\theta)[p,p] ~\leq~ 2K_s \| p \|^2 .
\end{align}
The strongest conclusion that can be drawn here is from the lower bound when $K$ is negative. That is, $2K_s + K > 0$ or $K/K_s > -2$ is sufficient for stability. Again, this same condition is necessary if the strict inequality is relaxed to a non-strict one, since the lower bound is achieved.

\textbf{Case 3:} Now, we examine the case where $m \notin \{0,0.5,1\}$. We can use the same tricks, including the identity $\cos(a-b) = \cos(a)\cos(b) + \sin(a)\sin(b)$, to obtain
\begin{equation} \label{case_3_eqn}
    D^2E(\theta)[p,p] = 2K_s \| p \|_{L^2}^2 - K(2m-1) \textstyle\int \cos(\theta(x)) p^2(x) \, dx + K ( \int \cos(\theta(x)) p(x) \, dx)^2 .
\end{equation}
Naively (i.e. not considering interdependence between the terms), we can bound the middle term above and below by $\pm K|2m-1| \| p \|^2$, while the last term ranges over $[0, K \| p \|^2]$ if $K$ is positive and over $[K \| p \|^2 , 0]$ if $K$ is negative. This gives the following bounds on the quadratic form:
\begin{enumerate}
    \item $K$ is positive: $ (2 K_s - K|2m-1|) \| p \|^2 ~\leq~ D^2E(\theta)[p,p] ~\leq~ (2 K_s + K(|2m-1|+1)) \| p \|^2$
    \item $K$ is negative: $(2 K_s + K(|2m-1|+1)) \| p \|^2 ~\leq~ D^2E(\theta)[p,p] ~\leq~ (2K_s - K|2m-1|) \| p \|^2$
\end{enumerate}
The strongest conclusion that can be drawn here uses both lower bounds: $2K_s - K|2m-1| > 0$ or $K/K_s < 2/|2m-1|$ is sufficient for stability if $K$ is positive, and $2K_s + K(|2m-1|+1) > 0$ or $K/K_s > -2/(|2m-1|+1)$ is sufficient for stability if $K$ is negative. Altogether, this gives the sufficient stability bounds
\begin{equation}
    -2/(|2m-1|+1) ~<~ K/K_s ~<~ 2/|2m-1| .
\end{equation}
The only remaining question is whether these bounds are tight. That is, can we find a perturbation $p$ that achieves the lower bounds for each term in \eqref{case_3_eqn} simultaneously? For positive $K$, the answer is yes: these bounds are achieved for $p$ supported on $(m,1]$ if $2m-1$ is negative or on $[0,m)$ if $2m-1$ is positive, and having zero mean on that set. However, if $K$ is negative, then the answer is no, since the lower bounds on the two terms are competing in this case. Thus, we require a more careful analysis of the lower bound in the case where $K$ is negative.

Let us fix $\|p\|^2$ to, say, 1. Then we may equivalently write the value of \eqref{case_3_eqn} as
\begin{equation}
    2 K_s - K C(m,p) ,
\end{equation}
where
\begin{equation}
    C(m,p) := (2m-1) \textstyle\int \cos(\theta(x)) p^2(x) \, dx - ( \int \cos(\theta(x)) p(x) \, dx)^2 .
\end{equation}
Since $K$ is negative, finding a lower bound on \eqref{case_3_eqn} is equivalent to finding a lower bound on $C(m,p)$. Breaking the integrals over the sets $[0,m]$ and $(m,1]$ and using the fact that $\cos(\theta(x))=1$ on $[0,m]$ and $-1$ on $(m,1]$, we define
\begin{equation}
    a := \int_0^m p^2 , ~~ b := \int_m^1 p^2 , ~~ u := \int_0^m p , ~~ v := \int_m^1 p .
\end{equation}
Observe that by Cauchy-Schwarz, we have the bounds $u^2 \leq ma$ and $v^2 \leq (1-m)b $, which are tight since they are achieved for $p =$ constant on each set. We can then rewrite the term $C(m,p)$ in terms of these parameters as
\begin{equation}
    C(m,p) = (2m-1)(a-b) - (u-v)^2 = (2m-1)(a-b) - u^2 - v^2 + 2uv 
\end{equation}
under the constraints $a,b \geq 0$, $a+b=1$, $u^2 \leq ma$, $v^2 \leq (1-m)b$.

The term $(u-v)^2 = u^2 + v^2 - 2uv$ is maximized for any fixed $a,b$ by taking $u,v$ at their Cauchy-Schwarz bounds and with opposite signs, for example, by $p = \sqrt{a/m}$ on $[0,m]$ and $-\sqrt{b/(1-m)}$ on $(m,1]$. We can then give the following tight lower bound on $C(m,p)$:
\begin{equation}
    (2m-1)(a-b) - ma - (1-m)b -2 \sqrt{m(1-m)ab}  ~\leq~ C(m,p) .
\end{equation}
Using the identity $b = 1-a$, we may rewrite this as
\begin{equation}
    (2m-1)(2a-1) - ma - (1-m)(1-a) - 2 \sqrt{m(1-m)a(1-a)} ~\leq~ C(m,p) . 
\end{equation}
It is straightforward to verify that this function is minimized by the choice $a = 1-m$, which attains the value $C(m,p) = -1$. Therefore, we have the tight lower bound
$ -1 \leq C(m,p)$, which is achieved for $p = \sqrt{(1-m)/m}$ on $[0,m]$ and $-\sqrt{m/(1-m)}$ on $(m,1]$. Thus, we have
\begin{equation}
    2 K_s + K \leq D^2E(\theta)[p,p] , 
\end{equation}
and so $0 < 2K_s + K$ is sufficient for stability. Rearranging, this gives the bound $K/K_s > -2 $. This gives the following sufficient stability bounds in this case
\begin{equation}
    -2 < K/K_s < 2 / |2m-1| .
\end{equation}
By our earlier arguments, these bounds are achieved, and so they are also necessary for stability if the inequalities are relaxed to be non-strict.

\section*{Acknowledgments}

The authors would like to thank Ahmed Allibhoy for numerous insightful discussions during the development of this work.

\bibliographystyle{unsrt}
\bibliography{references}

\begin{thebibliography}{10}

\bibitem{AL::2014::FrontPhys}
Andrew Lucas.
\newblock Ising formulations of many {NP} problems.
\newblock {\em Frontiers in Physics}, 2:5, 2014.

\bibitem{RK::2010::IntProg}
Richard~M. Karp.
\newblock Reducibility among combinatorial problems.
\newblock In {\em 50 Years of Integer Programming 1958-2008}, pages 219--241.
  Springer, 2010.
\newblock Year corrected to 2010 per Springer's own citation (some sources list
  2009, the book's copyright/press date).

\bibitem{TW::2019::UCNC}
Tianshi Wang and Jaijeet Roychowdhury.
\newblock {OIM}: Oscillator-based ising machines for solving combinatorial
  optimisation problems.
\newblock In {\em Unconventional Computation and Natural Computation (UCNC
  2019)}, volume 11493 of {\em Lecture Notes in Computer Science}, pages
  232--256. Springer, 2019.

\bibitem{TW::2021::NatComput}
Tianshi Wang, Leon Wu, Parth Nobel, and Jaijeet Roychowdhury.
\newblock Solving combinatorial optimisation problems using oscillator based
  {Ising} machines.
\newblock {\em Natural Computing}, 20(2):287--306, 2021.
\newblock Corrected from prior draft: middle author is Leon Wu, not Liwei Wu.

\bibitem{KT::2019::FPL}
Kosuke Tatsumura, Alexander~R. Dixon, and Hayato Goto.
\newblock {FPGA}-based simulated bifurcation machine.
\newblock In {\em 2019 29th International Conference on Field Programmable
  Logic and Applications (FPL)}, pages 59--66. IEEE, 2019.

\bibitem{SU::2011::OptExpress}
Shoko Utsunomiya, Kenta Takata, and Yoshihisa Yamamoto.
\newblock Mapping of {Ising} models onto injection-locked laser systems.
\newblock {\em Optics Express}, 19(19):18091--18108, 2011.
\newblock End page added (18108); prior draft listed only the start page.

\bibitem{JC::2019::SciRep}
Jonathan Chou, Suraj Bramhavar, Siddhartha Ghosh, and William Herzog.
\newblock Analog coupled oscillator based weighted {Ising} machine.
\newblock {\em Scientific Reports}, 9:14786, 2019.

\bibitem{SD::2021::NatElectron}
Sourav Dutta, Abhishek Khanna, Amey~S. Assoa, Hanjong Paik, Darrell~G. Schlom,
  Zoltan Toroczkai, Arijit Raychowdhury, and Suman Datta.
\newblock An {Ising} hamiltonian solver based on coupled stochastic
  phase-transition nano-oscillators.
\newblock {\em Nature Electronics}, 4:502--512, 2021.

\bibitem{WM::2022::NatElectron}
William Moy, Ibrahim Ahmed, Po-Wei Chiu, Kerem Camsari, Suman Datta, and Arijit
  Raychowdhury.
\newblock A 1,968-node coupled ring oscillator circuit for combinatorial
  optimization problem solving.
\newblock {\em Nature Electronics}, 5:310--317, 2022.

\bibitem{MB::2021::IEEEAccess}
Mohammad~Khairul Bashar, Antik Mallick, and Nikhil Shukla.
\newblock Experimental investigation of the dynamics of coupled oscillators as
  {Ising} machines.
\newblock {\em IEEE Access}, 9:148184--148190, 2021.

\bibitem{AM::2021::IEDM}
Antik Mallick, Mohammad~Khairul Bashar, Daniel~S. Truesdell, Benton~H. Calhoun,
  and Nikhil Shukla.
\newblock Overcoming the accuracy vs. performance trade-off in oscillator
  {Ising} machines.
\newblock In {\em 2021 IEEE International Electron Devices Meeting (IEDM)},
  pages 40.2.1--40.2.4. IEEE, 2021.

\bibitem{DN::2015::JXCDC}
Dmitri~E. Nikonov, Gyorgy Csaba, Wolfgang Porod, Tadashi Shibata, Danny Voils,
  Dan Hammerstrom, Ian~A. Young, and George~I. Bourianoff.
\newblock Coupled-oscillator associative memory array operation for pattern
  recognition.
\newblock {\em IEEE Journal on Exploratory Solid-State Computational Devices
  and Circuits}, 1:85--93, 2015.

\bibitem{AM::2020::NatCommun}
Antik Mallick, Mohammad~Khairul Bashar, Dylan~Shea Truesdell, Benton~H.
  Calhoun, Siddharth Joshi, and Nikhil Shukla.
\newblock Using synchronized oscillators to compute the maximum independent
  set.
\newblock {\em Nature Communications}, 11:4689, 2020.

\bibitem{MB::2023::SciRep}
Mohammad~Khairul Bashar and Nikhil Shukla.
\newblock Designing {Ising} machines with higher order spin interactions and
  their application in solving combinatorial optimization.
\newblock {\em Scientific Reports}, 13:9558, 2023.
\newblock Corrected from prior draft: published 2023 in vol. 13 (not 2021, vol.
  11); DOI corrected accordingly.

\bibitem{YC::2024::Chaos}
Yi~Cheng, Mohammad~Khairul Bashar, Nikhil Shukla, and Zongli Lin.
\newblock A control theoretic analysis of oscillator {Ising} machines.
\newblock {\em Chaos: An Interdisciplinary Journal of Nonlinear Science},
  34(7):073103, 2024.

\bibitem{MB::2023::JApplPhys}
Mohammad~Khairul Bashar, Zongli Lin, and Nikhil Shukla.
\newblock Stability of oscillator {Ising} machines: Not all solutions are
  created equal.
\newblock {\em Journal of Applied Physics}, 134(14):144901, 2023.

\bibitem{IA::2021::JSSC}
Ibrahim Ahmed, Po-Wei Chiu, William Moy, and Chris~H. Kim.
\newblock A probabilistic compute fabric based on coupled ring oscillators for
  solving combinatorial optimization problems.
\newblock {\em IEEE Journal of Solid-State Circuits}, 56(9):2870--2880, 2021.
\newblock Corrected from prior draft: fourth author is Chris H. Kim, not Arijit
  Raychowdhury.

\bibitem{SS::2000::PhysicaD}
Steven~H. Strogatz.
\newblock From {Kuramoto} to {Crawford}: Exploring the onset of synchronization
  in populations of coupled oscillators.
\newblock {\em Physica D: Nonlinear Phenomena}, 143(1-4):1--20, 2000.

\bibitem{YK::1975::LNP}
Yoshiki Kuramoto.
\newblock Self-entrainment of a population of coupled non-linear oscillators.
\newblock In Huzihiro Araki, editor, {\em International Symposium on
  Mathematical Problems in Theoretical Physics}, volume~39 of {\em Lecture
  Notes in Physics}, pages 420--422. Springer, 1975.

\bibitem{JA::2005::RevModPhys}
Juan~A. Acebr{\'o}n, Luis~L. Bonilla, Conrad~J. P{\'e}rez~Vicente, F{\'e}lix
  Ritort, and Renato Spigler.
\newblock The {Kuramoto} model: A simple paradigm for synchronization
  phenomena.
\newblock {\em Reviews of Modern Physics}, 77(1):137--185, 2005.

\bibitem{YK::2006::PTPSuppl}
Yoshiki Kuramoto, Shin-ichiro Shima, Dorjsuren Battogtokh, and Yuri Shiogai.
\newblock Mean-field theory revives in self-oscillatory fields with non-local
  coupling.
\newblock {\em Progress of Theoretical Physics Supplement}, 161:127--143, 2006.

\bibitem{HC::2013::ErgodicTheory}
Hayato Chiba.
\newblock A proof of the {Kuramoto} conjecture for a bifurcation structure of
  the infinite-dimensional {Kuramoto} model.
\newblock {\em Ergodic Theory and Dynamical Systems}, 35(3):762--834, 2013.
\newblock Published online Oct. 2013 (matches DOI/CiNii record); the final
  print issue is dated 2015 and is cited as such in some later papers.

\bibitem{HC::2019a::DCDS}
Hayato Chiba and Georgi~S. Medvedev.
\newblock The mean field analysis of the {Kuramoto} model on graphs {I}. the
  mean field equation and transition point formulas.
\newblock {\em Discrete and Continuous Dynamical Systems - Series A},
  39(1):131--155, 2019.

\bibitem{HC::2019b::DCDS}
Hayato Chiba and Georgi~S. Medvedev.
\newblock The mean field analysis of the {Kuramoto} model on graphs {II}.
  asymptotic stability of the incoherent state, center manifold reduction, and
  bifurcations.
\newblock {\em Discrete and Continuous Dynamical Systems - Series A},
  39(7):3897--3921, 2019.
\newblock DOI not confirmed in this session -- please verify (likely
  10.3934/dcds.2019xxx); year corrected/confirmed as 2019, not 2020 as in the
  "Chiba2020" source key.

\bibitem{RJ::1998::SIAMMathAnal}
Richard Jordan, David Kinderlehrer, and Felix Otto.
\newblock The variational formulation of the {Fokker-Planck} equation.
\newblock {\em SIAM Journal on Mathematical Analysis}, 29(1):1--17, 1998.

\bibitem{FO::2001::CommPDE}
Felix Otto.
\newblock The geometry of dissipative evolution equations: The porous medium
  equation.
\newblock {\em Communications in Partial Differential Equations},
  26(1-2):101--174, 2001.
\newblock Journal/volume/pages added -- omitted in the source .bib entry.

\bibitem{LA::2005::GradFlows}
Luigi Ambrosio, Nicola Gigli, and Giuseppe Savar{\'e}.
\newblock {\em Gradient Flows: in Metric Spaces and in the Space of Probability
  Measures}.
\newblock Springer, 2005.

\bibitem{JC::2003::RevMatIberoam}
Jos{\'e}~A. Carrillo, Robert~J. McCann, and C{\'e}dric Villani.
\newblock Kinetic equilibration rates for granular media and related equations:
  Entropy dissipation and mass transportation estimates.
\newblock {\em Revista Matem{\'a}tica Iberoamericana}, 19(3):971--1018, 2003.

\bibitem{BM::2010::M3AS}
Bertrand Maury, Aude Roudneff-Chupin, and Filippo Santambrogio.
\newblock A macroscopic crowd motion model of gradient flow type.
\newblock {\em Mathematical Models and Methods in Applied Sciences},
  20(10):1787--1821, 2010.

\bibitem{PS::2019::PhysRevLett}
Per~Sebastian Skardal and Alex Arenas.
\newblock Abrupt desynchronization and extensive multistability in globally
  coupled oscillator simplexes.
\newblock {\em Physical Review Letters}, 122(24):248301, 2019.

\bibitem{AK::1975::Book}
Andrey~Nikolaevich Kolmogorov and Sergei~Vasilyevich Fomin.
\newblock {\em Introductory Real Analysis}.
\newblock Dover / Courier Corporation, 1975.

\bibitem{SW::2003::Springer}
Stephen Wiggins.
\newblock {\em Introduction to Applied Nonlinear Dynamical Systems and Chaos}.
\newblock Springer, 2003.

\bibitem{HT::1992::JMathBiol}
Horst~R. Thieme.
\newblock Convergence results and a {Poincar\'e-Bendixson} trichotomy for
  asymptotically autonomous differential equations.
\newblock {\em Journal of Mathematical Biology}, 30(7):755--763, 1992.

\bibitem{YB::1991::CPAM}
Yann Brenier.
\newblock Polar factorization and monotone rearrangement of vector-valued
  functions.
\newblock {\em Communications on Pure and Applied Mathematics}, 44(4):375--417,
  1991.

\end{thebibliography}
\end{document}